\DeclareMathOperator{\supp}{supp}
\DeclareMathOperator{\dist}{dist}
\DeclareMathOperator{\BV}{\mathrm{BV}}
\newcommand{\eps}{\varepsilon}
\DeclareMathOperator{\E}{\mathbb{E}}
\DeclareMathOperator{\J}{J}
\newcommand{\W}{\mathbb{W}}
\renewcommand{\ge}{\geqslant}
\renewcommand{\leq}{\leqslant}
\renewcommand{\geq}{\geqslant}
\newcommand{\F}{\mathcal{F}}
\newcommand{\T}{\mathbb{T}}
\newcommand{\TT}{\mathbb{I}}
\newcommand{\AF}{\mathcal{AF}}
\newcommand{\R}{\mathbb{R}}
\newcommand{\M}{\mathbb{M}}
\newcommand{\scalprod}[2]{\langle{#1},{#2}\rangle}
\newcommand{\eq}[1]{\begin{equation}{#1}\end{equation}}
\newcommand{\mlt}[1]{\begin{multline}{#1}\end{multline}}
\newcommand{\alg}[1]{\begin{align}{#1}\end{align}}
\newcommand{\set}[2]{\{{#1}\mid{#2}\}}
\newcommand{\Set}[2]{\Big\{{#1}\,\Big|\;{#2}\Big\}}
\newcommand{\twodots}{\,..\,}
\newcommand{\Lref}[1]{\stackrel{#1}{\leq}}
\newcommand{\Geqref}[1]{\stackrel{\scriptscriptstyle{\eqref{#1}}}{\geq}}
\newcommand{\avsum}{\mathop{\mathpalette\avsuminner\relax}\displaylimits}
\newcommand\avsuminner[2]{%
  {\sbox0{$\m@th#1\sum$}%
   \vphantom{\usebox0}%
   \ooalign{%
     \hidewidth
     \smash{\vrule height\dimexpr\ht0+1pt\relax depth\dimexpr\dp0+1pt\relax}%
     \hidewidth\cr
     $\m@th#1\sum$\cr
   }%
  }%
}
\newcommand{\Bell}{\mathbb{B}}
\DeclareMathOperator{\NV}{NV}
\newcommand{\X}{\mathfrak{X}}
\newcommand{\TTT}{\mathfrak{T}}
\newcommand{\Z}{\mathfrak{Z}}
\newcommand{\ZZ}{\mathbb{Z}}
\newcommand{\SSS}{\mathfrak{S}}
\newcommand{\WW}{\mathfrak{W}}
\newcommand{\C}{\mathbb{C}}
\newtheorem{Le}{Lemma}[section]
\newtheorem{Def}{Definition}[section]
\newtheorem{St}[Le]{Proposition}
\newtheorem{Th}{Theorem}[section]
\newtheorem{Cor}[Le]{Corollary}
\newtheorem{Rem}[Le]{Remark}
\newtheorem{Conj}{Conjecture}
\numberwithin{equation}{section}
\newcommand{\Wg}{W_{\nabla}}
\newcommand{\Wd}{W_{\mathrm{div}}}
\newcommand{\Og}{\Omega_{\nabla}}
\newcommand{\Od}{\Omega_{\mathrm{div}}}
\begin{document}

\title{Trace inequalities for Sobolev martingales}

\author{Dmitriy Stolyarov\thanks{Supported by the Russian Science Foundation grant no 19-71-30002.}}

\maketitle

%\subjclass{Primary 60G46; Secondary 46E35}

\begin{abstract}
We study limiting trace inequalities in the style of Maz'ya and Meyers--Ziemer for Sobolev martingales. We develop the Bellman function approach to such estimates, which allows to provide sufficient and almost necessary conditions on the martingale space and the martingale transform under which the trace inequalities hold true. 
%% Keywords are optional
%\keywords{Sobolev embedding, martingale inequalities, Hausdorff dimension of measures, trace theorem}
\end{abstract}

\section{Introduction}
The space of summable functions and its subspaces play a special role in analysis. Many statements that are true for~$L_p$ spaces in the reflexive regime~$1 < p < \infty$ break at the endpoints; one may recall the~$L_p$ boundedness of singular integrals as an example. Other remain true when interpreted properly, but the proofs might be more complicated. This is the instance for Sobolev embedding theorems. While the foundations of this theory in the reflexive regime were laid by Sobolev in~\cite{Sobolev1938}, the endpoint case~$p=1$ was not covered until the work of Gagliardo~\cite{Gagliardo1959} and Nirenberg~\cite{Nirenberg1959}. Even then, many natural interesting questions were left open in the case~$p=1$. One of the directions of research was initiated by Bourgain and Brezis in~\cite{BourgainBrezis2004}. Roughly speaking, we may describe Bourgain--Brezis inequalities as 
\eq{\label{BBGeneral}
\|f\|_{L_p(\R^d)} \lesssim \|A f\|_{L_1(\R^d)},\qquad 
}
where~$A$ is a differential operator of order~$m$ or a Fourier multiplier with similar homogeneity properties,~$p = d/(d-m)$, and~$f$ may satisfy additional constraints (such as~$\mathrm{div} f =0$). Usually, these inequalities are related to vectorial behavior of the function~$f$ or the operators in question ($Af$ is usually a vector field or a differential form). We refer the reader to the papers~\cite{BourgainBrezis2002},~\cite{BourgainBrezis2007}, \cite{BousquetVanSchaftingen2014}, \cite{CVSYu2017},  \cite{HernandezSpector2020}, \cite{KMS2015}, \cite{LanzaniStein2005}, \cite{Mazya2007}, \cite{Mazya2010}, \cite{Raita2018}, \cite{Raita2019}, \cite{VanSchaftingen2004}, \cite{VanSchaftingen2004one}, \cite{VanSchaftingen2008}, \cite{VanSchaftingen2010}, \cite{VanSchaftingen2013}, \cite{SpectorVanSchaftingen2019}, \cite{Stolyarov2021},  \cite{Stolyarov2020},  among many others, as well as the surveys~\cite{VanSchaftingen2014} and~\cite{Spector2020} for many Bourgain--Brezis inequalities.

There is a classical trace inequality
\eq{\label{MMZ}
\|f\|_{L_1(\mu)} \lesssim  \Big(\sup\limits_{\genfrac{}{}{0pt}{-2}{x\in \R^d}{r > 0}} r^{1-d}\mu(B_r(x))\Big)\|\nabla f\|_{L_1(\R^d)},\qquad f\in C_0^\infty(\R^d),
}
proved by Maz'ya in~\cite{Mazya1975} and Meyers and Ziemer in~\cite{MeyersZiemer1977} (see~\cite{Mazya2011} for generalizations), which, seemingly does not have any relatives among Bourgain--Brezis inequalities. We call this inequality the trace inequality, because it allows to define traces of~$W_1^1$ and even~$\BV$ functions on sets of non-zero~$\mathcal{H}^{d-1}$-measure (we plug for~$\mu$ the natural Frostman measure related to the set). It is desirable to obtain similar inequalities in the generality of~\eqref{BBGeneral}; first, because it is an~$L_1\to L_1$ inequality, which seems to be more difficult than the usual~$L_1\to L_p$ Bourgain--Brezis inequalities; second, such type inequalities often deliver information about geometric properties of corresponding~$\BV$-type measures. The depth of the desirable analog of~\eqref{MMZ} is emphasized by the fact that defining the trace on a hyperplane, the simplest~$(d-1)$-dimensional set, in the generality of~\eqref{BBGeneral} was already a demanding problem. We refer the reader to~\cite{BDG2020},~\cite{DieningGmeineder2021},~\cite{GRV2022}, and~\cite{GRvS2019} for trace inequalities on hyperplanes for differential operators and~$L_1$-norms.

In this paper, we do not aim to say anything new about functions on~$\R^d$. We wish to explore the aforementioned class of inequalities in the related martingale setting introduced in~\cite{ASW2018} (the discrete model might be traced to~\cite{Janson1977}). While there is a well-known analogy between singular integrals on~$\R^d$ and martingale transforms, the existence of an effect similar to that of Bourgain--Brezis inequalities in the martingale setting was noticed only in~\cite{ASW2018}. The consideration of these related discrete problems allows to guess the right way to prove statements in the Euclidean case. See~\cite{Stolyarov2020} for the proof of several endpoint Bourgain--Brezis inequalities with the approach suggested by~\cite{ASW2018}, \cite{Stolyarov2019} for a probabilistic model of weakly cancelling operators and the implementation of the reasoning of that paper on the Euclidean setting in~\cite{Stolyarov2021}. See~\cite{Stolyarov2021bis} for Maz'ya's $\Phi$-inequalities in the martingale setting and~\cite{Stolyarov2021bibis} for the transference of that reasoning to the Euclidean setting.

In the forthcoming section, we introduce the martingale model. Section~\ref{S2} contains description of our methods. In~\cite{ASW2018}, the authors relied upon common combinatorial tricks for proving martingale inequalities (mostly stopping times), here we will need to use a finer tool called the Bellman function or the Burkholder method. We refer the reader to the foundational papers~\cite{Burkholder1984},~\cite{NazarovTreil1996} and the books~\cite{Osekowski2012}, \cite{VasyuninVolberg2020} for the basics of the Bellman function method. Though the present paper is self-contained, it might be instructive to consult~\cite{Stolyarov2021bis}, where a simpler Bellman function was used to solve a related problem. For smoothness of exposition, we first prove the non-limiting~$L_1$-trace inequality from~\cite{ASW2018} using Bellman function in Section~\ref{S3}. This is already interesting, because the Bellman extremal problem arising here seems new. Section~\ref{S4} introduces additional requirements on the spaces and functionals and states the main result, Theorem~\ref{Main}. The assumptions we impose on the martingale Sobolev space  (see Definition~\ref{GeometricSpace}) might be thought of as martingale versions of the requirements that the~$k$-wave cone is empty introduced in~\cite{APHF2019} (see~\cite{Stolyarov2020bis} as well). This is a simple condition on the operator~$A$ as in~\eqref{BBGeneral} sufficient to conclude that any vectorial measure~$\mu =Af$ has the lower Hausdorff dimension at least~$k$; note that in some cases this condition might be essentially sharpened, see~\cite{Ayoush2021}. Section~\ref{S5} contains the proof of the main theorem, which has been reduced to a verification of a finite dimensional inequality by the Bellman function method in Section~\ref{S3}; the said finite dimensional inequalities are still non-trivial. The final section is devoted to two examples that emphasize the analogy between the martingale problems and similar questions in the Euclidean setting. We consider two particular cases of martingale Sobolev spaces that resemble the spaces~$\dot{W}_1^1$ and the space of divergence-free measures and apply our investigations to these two cases; the results are summarized in Theorems~\ref{GraadTh} and~\ref{DivTh}. We end the paper with formulating a conjecture about functions on the Euclidean space.

\section{Sobolev martingales}\label{S1}
Consider a probability space. If~$S$ is a finite algebra of measurable sets, a non-empty set~$a\in S$ is called an atom provided there are no non-empty sets~$b\in S$ such that~$b\subset a$. Suppose that~$m \geq 2$ is a fixed natural number and~$\F = \{\F_n\}_{n \geq 0}$ is an~$m$-uniform filtration (i.e. an increasing sequence of set algebras,~$\F_n\subset \F_{n+1}$, such that each atom in~$\F_n$ is split into~$m$ atoms in~$\F_{n+1}$ having equal masses; we also assume that~$\F_0$ is the trivial algebra)  on the standard atomless probability space. The set of atoms of the algebra~$\F_n$ is denoted by~$\AF_n$. There is a natural tree structure on the set of all atoms~$\cup_n \AF_n$: the atom~$\omega' \in \AF_{n+1}$ is a kid of~$\omega \in \AF_{n}$ if~$\omega' \subset \omega$. In such a case,~$\omega$ is the parent of~$\omega'$, and we denote the parent of~$\omega'$ by~$(\omega')^{\uparrow}$. For each atom~$a\in \AF_n$, we enumerate its kids with the numbers~$1,2,\ldots,m$ and fix this enumeration.

We may treat our probability space equipped with an~$m$-uniform filtration as a metric space~$\mathbb{T}$. The points of~$\mathbb{T}$ are infinite paths in the tree of atoms (we start from the whole space, then choose an atom in~$\AF_1$, then pass to one of its kids in~$\AF_2$, and so on). More formally, a path is a mapping~$\gamma \colon \mathbb{N}\cup\{0\} \to [1\twodots m]$. A point of~$\mathbb{T}$ that corresponds to such a mapping~$\gamma$ may be thought of as the intersection of the atoms~$\omega_n\in \AF_n$,~$n=0,1,2,\ldots$, where~$\omega_{n+1}$ is the~$\gamma(n)$-th kid of~$\omega_n$. This allows to interpret atoms as subsets of~$\mathbb{T}$. The distance between the two paths~$\gamma_1$ and~$\gamma_2$ is defined by the formula
\begin{equation*}
\dist(\gamma_1,\gamma_2) = m^{-d}, \quad d = \max\{n\in\mathbb{N}\mid \gamma_1(j) = \gamma_2(j) \hbox{ for all } j < n\}.
\end{equation*}  
With this metric,~$\mathbb{T}$ becomes a compact metric space%, so we can introduce Hausdorff measures there and define the Hausdorff dimension of sets and measures
.   Another way to interpret~$\mathbb{T}$ is to identify a path~$\gamma = (\gamma(0),\gamma(1),\gamma(2),\ldots)$ with the formal series~$\sum_{n=0}^\infty(\gamma(n)-1)m^{-n-1}$, 
which induces the surjection~$\T\to [0,1]$. This interpretation suggests to call~$\gamma(n)$ the~$n$-th digit of~$\gamma$. In this interpretation, the atoms become~$m$-adic subintervals of~$[0,1]$. One may prove that~$\mathbb{T}$ is homeomorphic to a Cantor-type set.

Recall that a sequence of random variables~$F = \{F_n\}_n$ is called a martingale adapted to~$\F$, provided, first, each~$F_n$ is~$\F_n$-measurable, and second,~$F_{n} = \E(F_{n+1}\mid \F_n)$ for any~$n\ge 0$. For a martingale~$F$ adapted to~$\F$, let~$\{dF_n\}_{n \geq 1}$ be the sequence of its martingale differences:
\begin{equation*}
dF_{n+1} = F_{n+1} - F_n,\qquad n \geq 0.
\end{equation*} 
We refer the reader to Chapter VII in~\cite{Shiryaev2019} for the general martingale theory. We will be working with a very narrow class of martingales adapted to uniform filtrations and will not use anything other than terminlogy from the general theory.

We will be using the notation~$[1\,..\, m]$ to denote the integer interval with the endpoints~$1$ and~$m$, including the endpoints. Consider the linear space
\begin{equation*}
V = \Set{v\in\mathbb{R}^m}{\sum_{1}^m v_j = 0}.
\end{equation*}
For each atom~$\omega \in \AF_n$, the function~$dF_{n+1}$ attains at most~$m$ values on~$\omega$, and, thus, might be identified with an element of~$V$.  Namely, for any~$\omega$, we fix a bijective map
\begin{equation*}
\J_{\omega}\colon [1\,..\,m]\to \{\omega' \in \AF_{n+1}\mid \omega = (\omega')^\uparrow\}
\end{equation*}
that maps the number~$j$ to the~$j$-th kid of~$w$. This map may be extended linearly to the mapping between~$V$ and the space of restrictions of all possible martingale differences~$dF_{n+1}$ to~$\omega$. This extended map is also called~$\J_{\omega}$.
%For each~$n$ and each~$\omega$, we fix~$\J_{\omega}$. 

Let us observe that each signed Borel measure~$\mu$ with bounded variation on~$\T$ generates a martingale via the formula
\begin{equation}\label{MeasureGeneratesAMartingale}
F_n = m^n\sum\limits_{\omega\in \AF_n} \mu(\omega)\chi_{\omega},\qquad n \in \mathbb{N} \cup \{0\}.
\end{equation}
Note that the characteristic functions of atoms are continuous with respect to the metric~$d$ and form a total family in the space~$C(\mathbb{T})$ (i.e. each measure on~$\mathbb{T}$ is uniquely defined by its values at the atoms). Using this fact one may establish the one-to-one correspondence via~\eqref{MeasureGeneratesAMartingale} between finite signed measures on~$\mathbb{T}$ and~$L_1$-martingales adapted to~$\F$. By an~$L_1$ martingale we mean a martingale~$F$ such that~$\sup_n\|F_n\|_{L_1}$ is finite, the latter expression is the~$L_1$ norm of~$F$.

The Hardy--Littlewood--Sobolev inequality for martingales (going back to~\cite{Watari1964}) reads as follows: 
\eq{\label{HLSmart}
\Big\|\sum\limits_{n\geq 0}m^{-\alpha n} dF_n\Big\|_{L_q} \lesssim \|F\|_{L_p},
}
where~$1/p - 1/q = \alpha$ and~$1 < p < q < \infty$. Here and in what follows, the notation~$A\lesssim B$ means~$A\leq C B$, where the constant~$C$ is uniform in a certain sense. For example, the constant in~\eqref{HLSmart} should not depend on the particular choice of~$F$, however, it might depend on~$p$ and~$q$. The parameter~$\alpha$ should be interpreted as the order of the integration operator. See~\cite{NakaiSadasue2012} and~\cite{StolyarovYarcev2020} for more general versions of the Hardy--Littlewood--Sobolev inequality for martingales.
\begin{Rem}\label{SubcriticalRemark}
Note that the inequality~\eqref{HLSmart} is true when~$p=1$ and~$1-1/q < \alpha$ since
\eq{
\|dF_n\|_{L_q} \leq m^{\frac{(q-1)(n+1)}{q}}\|dF_n\|_{L_1}\leq 2m^{\frac{(q-1)(n+1)}{q}}\|F\|_{L_1}.
}
\end{Rem}

If the martingale~$F$ is~$\mathbb{R}^{\ell}$-valued, then~$f_{n+1}|_{\omega}$ might be naturally identified with an element of~$V\otimes \R^\ell$ (we apply~$\J_{\omega}$ to each of~$\ell$ coordinates individually), here~$\omega \in\AF_n$. In other words, we may think of the trace of~$f_{n+1}$ on~$\omega \in \AF_n$ as of an~$m\times \ell$ matrix such that the sum of the elements in each row equals zero. Let~$W$ be a linear subspace of~$V\otimes \R^\ell$. Consider the subspace~$\W$ of the~$\mathbb{R}^\ell$-valued martingale space~$L_1$:
\begin{equation*}
\W = \Big\{F \text{ is an } L_1 \text{ martingale with values in } \R^\ell\,\Big|\; \forall n,\ \omega \in \AF_n \quad dF_{n+1}|_\omega \in \J_{\omega}[W]\Big\}.
\end{equation*}
In view of~\eqref{MeasureGeneratesAMartingale}, one should think of~$\W$  as of a~$\BV$-type space. These spaces were implicitly introduced in~\cite{Janson1977} and proved to be good discrete models for several problems in harmonic analysis on~$L_1$ and the space of measures, see~\cite{ASW2018} and \cite{Stolyarov2019}; the former paper suggested the name 'Sobolev martingales' since the behavior of the spaces~$\W$ resembles that of~$\dot{W}_1^1$.

\section{Setting of the trace problem and the Bellman function}\label{S2}
Let~$\varphi\colon W\to V$ be a linear operator %\footnote{Seemingly, we may work with a slightly larger class of operators that map~$W$ to~$\R^m$ without any changes.} 
and let~$\alpha \in (0,1)$. Consider the operator
\begin{equation}\label{Raita's}
\TT_\alpha [F] = \sum\limits_{n} m^{-\alpha n}\sum\limits_{\omega \in \AF_n}\J_\omega\Big[\varphi(\J^{-1}_\omega [dF_{n+1}|_{\omega}])\Big],\qquad F \in \W. 
\end{equation} 
This operator maps martingales to functions; by the correspondence~\eqref{MeasureGeneratesAMartingale}, we may interpret it as an operator that transforms measures into functions (the series converges in~$L_q$ with~$q < 1/(1-\alpha)$ by Remark~\ref{SubcriticalRemark}). The operator~$\J_w$ is used to make the formula mathematically rigorous, informally,~\eqref{Raita's} may be written as
\eq{
\TT_\alpha [F] = \sum\limits_{n} m^{-\alpha n}\sum\limits_{\omega \in \AF_n}\varphi(dF_{n+1}|_{\omega}),\qquad F \in \W.
}
We note that the operator~$\TT_\alpha$ cannot be applied to an arbitrary~$\R^\ell$-valued martingale since~$\varphi$ is defined on~$W$ only and the condition~$dF_{n+1}|_\omega \in W$ is required.
We will also use the notation
\eq{
\Big(\TT_\alpha [F]\Big)_N = \sum\limits_{n=0}^{N-1} m^{-\alpha n}\sum\limits_{\omega \in \AF_n}\J_\omega\Big[\varphi(\J^{-1}_\omega [dF_{n+1}|_{\omega}])\Big],\qquad F \in \W.
}
Note that~$(\TT_\alpha [F])_N$ is~$\F_N$-measurable. Therefore,~$\{(\TT_\alpha [F])_n\}_n$ is a martingale.
\begin{Rem}
Set~$\ell = 1$. The choice of the identity operator for~$\varphi$ reduces~$\TT_\alpha$ to the martingale Riesz potential present in the Hardy--Littlewood--Sobolev inequality~\eqref{HLSmart}. A different choice of the operator~$\varphi$ might make the operator~$\TT_\alpha$ more cancelling than the Riesz potential \textup(usually\textup, larger~$\ell$ are needed for such a choice\textup). This is similar to the difference between the classical Euclidean Riesz potential~$|x|^{\beta}$ and the kernel~$x/|x|^{\beta+1}$. As we will see\textup, these additional cancellations become important only at endpoints \textup(a similar effect appeared in~\textup{\cite{RSS2021}}\textup).
\end{Rem}
We will be studying the question: 'for which measures~$\nu$ on~$\T$ does the inequality
\eq{\label{Trace}
\|\TT_{\alpha}[F]\|_{L_1(\nu)}\lesssim \|F\|_{L_1},\qquad F\in \W,
}
hold true?' This inequality allows to define~$L_1$-traces of~$\TT_\alpha[F]$ on lower-dimensional subsets of~$\T$ (this explains the name 'trace inequlaity'). We will be working with simple martingales only since usually the result may be extended to the class of all~$L_1$-martingales by a routine limiting argument. 
\begin{Def}
	We say that a measure~$\nu$ with bounded variation on~$\mathbb{T}$ satisfies the~$(\alpha,p)$ Frostman condition provided
	\begin{equation}\label{Frostman}
	\forall n \geq 0\quad \forall \omega \in \AF_n \quad (\nu(\omega))^{\frac{1}{p}} \lesssim m^{(\alpha - 1)n}.
	\end{equation}
\end{Def}
One may restate the~$(\alpha,p)$ Frostman conditions in terms of Morrey space norms, see~\cite{RSS2021}. Sometimes, the~$(\alpha,p)$ Frostman conditions is called the ball growth condition.
\begin{Rem}
One may prove that the~$(\alpha,1)$-Frostman condition is necessary for~\eqref{Trace}\textup, provided~$\varphi$ is not degenerate. Namely\textup, assume that for any~$j\in [1\,..\,m]$ there exists~$v\in W$ such that
the~$j$-th coordinate of~$\varphi[v]$ as an element of~$\R^m$\textup, is non-zero \textup(we will later denote this number by~$(\varphi[v])_j$\textup,~$j=1,2,\ldots, m$\textup). Then\textup, the~$(\alpha,1)$-Frostman condition is necessary for~\eqref{Trace}. One may prove this by plugging 'elementary martingales' into~\eqref{Trace}\textup, i.e. martingales~$F$ for which~$dF_{n+1}|_w$ is non-zero for only single~$n$ and single~$w\in \AF_n$.
\end{Rem}
%We will study the problem with the use of the Bellman function. The Bellman function approach to a similar problem was developed in~\cite{Stolyarov2021bis}.
We will be working with a slightly more demanding 'bilinear' form of~\eqref{Trace}:
\eq{\label{BilinearTrace}
\|\TT_{\alpha}[F]\|_{L_1(\nu)}\lesssim\Big(\sup\limits_{\genfrac{}{}{0pt}{-2}{n\geq 0,}{\omega \in \AF_n}}m^{(1-\alpha)n}\nu(\omega)\Big) \|F\|_{L_1} ,\qquad F\in \W.
}
\begin{Def}\label{Def32}
Define the Bellman function~$\Bell \colon \R^\ell\times\R\times \R_+\times \R_+\times\R_+\to \R$ by the rule
\mlt{\label{Bell}
\Bell(x,y,z,t,s) \\ =\sup\bigg(\Set{\int\limits_\T\big|y+\TT_\alpha[F]\big|\,d\nu}{F_0=x,\E|F_{\infty}| = z, \nu(\T) = t, \sup\limits_{\genfrac{}{}{0pt}{-2}{n\geq 0,}{\omega \in \AF_n}}m^{(1-\alpha)n}\nu(\omega) = s, \ F\in\W}\bigg).
}
The supremum in the formula above is taken over the set of simple martingales~$F$ only\textup; note that in such a case we may assume that~$\nu$ is generated \textup(via~\eqref{MeasureGeneratesAMartingale}\textup) by a simple martingale as well.
\end{Def}
The Bellman domain (i.e. the set of points~$(x,y,z,t,s)$ for which the supremum in~\eqref{Bell} is taken over a non-empty set) is
\eq{\label{Domain}
\Omega=\Set{(x,y,z,t,s)\in \R^\ell\times\R\times \R_+\times \R_+\times\R_+}{|x|\leq z, \ t\leq s}.
}
The Bellman function satisfies the boundary inequality (this follows from substitution of a constant martingale into~\eqref{Bell})
\eq{
\Bell(x,y,|x|,t,s) \geq |y|t.
}
The Bellman function also satisfies the main inequality
\eq{\label{MainIneq}
\Bell(x,y,z,t,s) \geq m^{-\alpha}\sum\limits_{j=1}^m\Bell(x_j,m^\alpha y + (\varphi[\vec{x}])_j,z_j,t_j,s_j),
}
where
\mlt{\label{SplittingRules}
x = \avsum\limits_{j=1}^{m} x_j,\ z = \avsum\limits_{j=1}^{m}z_j,\ t = \sum_{j=1}^m t_j, \\ \vec{x} = (x_1-x,x_2-x,\ldots, x_m-x)\in W,\ \text{and}\ s = \max(t,m^{1-\alpha}\max_{j=1}^m s_j).
}
The symbol~$\avsum_{j=1}^{m}$ means the average, i.e.~$\avsum_{j=1}^{m} a_j = \frac{1}{m}\sum_{j=1}^m a_j$.
\begin{St}\label{Prop32}
The Bellman function indeed satisfies the main inequality.
\end{St}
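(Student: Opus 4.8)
The plan is to establish \eqref{MainIneq} as a dynamic-programming inequality, gluing near-optimal competitors for the $m$ points on the right into one competitor for the left point $(x,y,z,t,s)$. Fix a point of $\Omega$ together with numbers $x_j,z_j,t_j,s_j$ and a vector $\vec x\in W$ satisfying the splitting rules \eqref{SplittingRules}, and assume each $P_j:=(x_j,m^\alpha y+(\varphi[\vec x])_j,z_j,t_j,s_j)$ again lies in $\Omega$ (otherwise the right-hand side of \eqref{MainIneq} equals $-\infty$). Fix $\eps>0$. By the very definition \eqref{Bell}, for each $j$ I would pick a simple martingale $F^{(j)}\in\W$ and a simple measure $\nu^{(j)}$ on $\T$, admissible for $P_j$ — that is, $F^{(j)}_0=x_j$, $\E|F^{(j)}_\infty|=z_j$, $\nu^{(j)}(\T)=t_j$, and $\sup_{n\geq0,\,\omega\in\AF_n}m^{(1-\alpha)n}\nu^{(j)}(\omega)=s_j$ — with objective $\int_\T|m^\alpha y+(\varphi[\vec x])_j+\TT_\alpha[F^{(j)}]|\,d\nu^{(j)}>\Bell(P_j)-\eps$.

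Next I would glue these configurations along the first generation. For each $j$, let $\iota_j\colon\T\to\T_j$ be the canonical homeomorphism onto the $j$-th atom of $\AF_1$ (prepending the digit $j$), chosen to respect the fixed enumeration of kids; it sends the uniform measure of $\T$ to $m$ times the uniform measure restricted to $\T_j$. Define $F$ by $F_0=x$, by $dF_1|_\T=\J_\T[\vec x]$ (so that $F_1\equiv x_j$ on $\T_j$; this is admissible precisely because $\vec x\in W$), and, for $n\geq1$ and $\omega\in\AF_n$ with $\omega\subset\T_j$, by letting $dF_{n+1}|_\omega$ be the image of $dF^{(j)}_n|_{\iota_j^{-1}(\omega)}$ under the identifications supplied by the maps $\J$; define $\nu$ by $\nu|_{\T_j}=(\iota_j)_*\nu^{(j)}$. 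Then $F\in\W$, since every martingale difference of $F$ lies in the prescribed subspace (the first one because $\vec x\in W$, the later ones because $F^{(j)}\in\W$), and $F,\nu$ are simple. The arithmetic in \eqref{SplittingRules} is exactly what makes $F,\nu$ admissible for $(x,y,z,t,s)$: one checks $F_0=x$; $\E|F_\infty|=\tfrac1m\sum_j\E|F^{(j)}_\infty|=\tfrac1m\sum_j z_j=z$; $\nu(\T)=\sum_j\nu^{(j)}(\T)=\sum_j t_j=t$; and, since an $\AF_n$-atom inside $\T_j$ corresponds under $\iota_j^{-1}$ to an $\AF_{n-1}$-atom of $\T$, so that $m^{(1-\alpha)n}\nu(\omega)=m^{1-\alpha}\,m^{(1-\alpha)(n-1)}\nu^{(j)}(\iota_j^{-1}(\omega))$, the supremum $\sup_{n\geq0,\,\omega\in\AF_n}m^{(1-\alpha)n}\nu(\omega)$ equals $\max\big(t,\,m^{1-\alpha}\max_j s_j\big)=s$.

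Finally I would evaluate the objective, the only real computation being the self-similarity of $\TT_\alpha$ under restriction to a first-generation atom. A reindexing of the series \eqref{Raita's} — in which the zeroth-level difference $\J_\T[\vec x]$ is responsible for the term $(\varphi[\vec x])_j$ on $\T_j$, while every generation $n\geq1$ turns, under $\iota_j^{-1}$, into generation $n-1$ of $\TT_\alpha[F^{(j)}]$ and hence picks up the common factor $m^{-\alpha}$ — gives, on $\T_j$,
\eq{\label{SelfSim}
y+\TT_\alpha[F]=m^{-\alpha}\big(m^\alpha y+(\varphi[\vec x])_j+\TT_\alpha[F^{(j)}]\circ\iota_j^{-1}\big).
}
Therefore $\int_\T|y+\TT_\alpha[F]|\,d\nu=\sum_j\int_{\T_j}|y+\TT_\alpha[F]|\,d\nu=m^{-\alpha}\sum_j\int_\T|m^\alpha y+(\varphi[\vec x])_j+\TT_\alpha[F^{(j)}]|\,d\nu^{(j)}>m^{-\alpha}\sum_j\big(\Bell(P_j)-\eps\big)$. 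Since $F,\nu$ is admissible for $(x,y,z,t,s)$, the left-hand side is at most $\Bell(x,y,z,t,s)$; letting $\eps\to0$ yields \eqref{MainIneq}. (If some $\Bell(P_j)=+\infty$, the same gluing with competitors of arbitrarily large objective forces $\Bell(x,y,z,t,s)=+\infty$, so the inequality is trivial.)

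I expect the main obstacle to be organizational rather than analytic: making the gluing and the self-similarity \eqref{SelfSim} rigorous through the bijections $\J_\omega$. One has to check that prepending $\J_\T[\vec x]$ and transplanting the differences of $F^{(j)}$ onto $\T_j$ really produces, for every atom $\omega$, a difference lying in $\J_\omega[W]$, and that the weights $m^{-\alpha n}$ in \eqref{Raita's} recombine exactly as in \eqref{SelfSim} after the one-generation shift. Once this is carefully set up, the verification of the admissibility constraints and the evaluation of the objective follow directly from \eqref{SplittingRules}.
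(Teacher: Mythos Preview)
The paper omits this proof entirely, pointing to Lemma~3.4 in~\cite{Stolyarov2021bis} and remarking that the proposition is not formally needed. Your gluing/dynamic-programming argument is exactly the standard mechanism behind such lemmas, and the verification of the constraints $F_0=x$, $\E|F_\infty|=z$, $\nu(\T)=t$, and $\sup m^{(1-\alpha)n}\nu(\omega)=s$ via the splitting rules~\eqref{SplittingRules} is carried out correctly.

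There is, however, an algebraic slip in your self-similarity identity. Your own verbal description is right: the $n=0$ term of~$\TT_\alpha$ contributes $(\varphi[\vec x])_j$ on~$\T_j$ with weight $m^{-\alpha\cdot 0}=1$, while the terms $n\ge 1$ contribute $m^{-\alpha}\,\TT_\alpha[F^{(j)}]\circ\iota_j^{-1}$. Hence on~$\T_j$
\[
y+\TT_\alpha[F]=y+(\varphi[\vec x])_j+m^{-\alpha}\,\TT_\alpha[F^{(j)}]\circ\iota_j^{-1}
=m^{-\alpha}\Big(m^\alpha y+m^{\alpha}(\varphi[\vec x])_j+\TT_\alpha[F^{(j)}]\circ\iota_j^{-1}\Big),
\]
not your displayed formula, which is missing the factor $m^{\alpha}$ on $(\varphi[\vec x])_j$. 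Carrying the correct identity through, your gluing actually proves
\[
\Bell(x,y,z,t,s)\ \ge\ m^{-\alpha}\sum_{j=1}^m\Bell\big(x_j,\,m^\alpha y+m^{\alpha}(\varphi[\vec x])_j,\,z_j,\,t_j,\,s_j\big),
\]
i.e.\ the main inequality with second argument $m^\alpha\big(y+(\varphi[\vec x])_j\big)$ rather than $m^\alpha y+(\varphi[\vec x])_j$ as in~\eqref{MainIneq}. This is the same normalization one obtains from the identity displayed at the end of the proof of Proposition~\ref{SuperSolution} (compute $m^{\alpha(n+1)}\big((\TT_\alpha[F])_{n+1}-(\TT_\alpha[F])_n\big)=m^{\alpha}(\varphi[\vec x])_j$), so the discrepancy seems to be a harmless normalization inconsistency in the paper rather than a flaw in your method; but as written, your formula~\eqref{SelfSim} does not follow from the preceding sentence and should be corrected.
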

Proposition~\ref{Prop32} is similar to Lemma~$3.4$ in~\cite{Stolyarov2021bis}, so, we omit its proof. Formally, we will not needed. A reverse statement is more important for us. We need to set up notation before formulating it. 
\begin{Def}
The configuration space~$\WW$ is formed by strings~$(\X,y,\Z,\TTT,\SSS)$\textup, where
\mlt{
\X = \{x_j\}_{j=1}^m\in \R^{\ell m},\quad y\in \R,\quad \Z = \{z_j\}_{j=1}^m\in (\R_+)^m,\\
 \TTT = \{t_j\}_{j=1}^m\in (\R_+)^m,\quad \text{and}\quad \SSS = \{s_j\}_{j=1}^m\in (\R_+)^m
}
satisfy the splitting rules~\eqref{SplittingRules}\textup, i.e.~$\vec{x}$ constructed by the formulas listed in~\eqref{SplittingRules} belongs to~$W$\textup, and the natural domain requirements~$|x_j| \leq z_j$ and~$t_j \leq s_j$ for all~$j = 1,2,\ldots,m$ are also fulfilled.
\end{Def}
\begin{St}\label{SuperSolution}
Let~$G\colon \Omega \to \R$ be a function that obeys the main inequality~\eqref{MainIneq}\textup, i.e. such that
\eq{\label{MIG}
G(x,y,z,t,s) \geq m^{-\alpha}\sum\limits_{j=1}^m G(x_j,m^\alpha y + (\varphi[\vec{x}])_j,z_j,t_j,s_j)
} 
for all~$(\X,y,\Z,\TTT,\SSS)\in \WW$ and~$x,z,t,s$ generated from them by~\eqref{SplittingRules}. Let~$\nu_n$ be the martingale generated by~$\nu$ via~\eqref{MeasureGeneratesAMartingale}\textup, and let
\eq{
M_n(\omega) = \sup\limits_{\genfrac{}{}{0pt}{-2}{N \geq n, v\subset \omega,}{v \in \AF_N}} m^{(1-\alpha)N}\nu(v).
}
Then\textup, the process
\eq{\label{Process}
m^{(1-\alpha)n} G(F_n, m^{\alpha n}(y_0+ (\TT_\alpha [F])_n), \E(|F_\infty|\mid \F_n),m^{-n}\nu_n, M_n)
}
is a supermartingale provided~$F\in \W$ and~$y_0\in\R$.
\end{St}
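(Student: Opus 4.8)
The plan is to reduce the claim to the one-step inequality~\eqref{MIG}, verified atom by atom. Write $X_n$ for the random variable defined by~\eqref{Process}. First I would record that, since $G$ is real-valued and $F$ is simple, every ingredient of~\eqref{Process} --- the martingale $F_n$, the truncation $(\TT_\alpha[F])_n$, the conditional expectation $\E(|F_\infty|\mid\F_n)$, the measure martingale $\nu_n$, and $M_n$ --- stabilizes after finitely many levels and takes only finitely many values; hence $X_n$ is bounded and it suffices to prove $\E(X_{n+1}\mid\F_n)\leq X_n$ for each $n\geq0$. Both sides are $\F_n$-measurable, so one may fix $n$ and an atom $\omega\in\AF_n$ and verify the inequality on $\omega$.

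Next I would set up the local picture. Let $\omega_1,\dots,\omega_m$ be the kids of $\omega$ in the chosen enumeration. On $\omega$ all five ingredients above are constant; call their values $x,p,z,t,s$ and set $y=m^{\alpha n}(y_0+p)$, so that $X_n|_\omega=m^{(1-\alpha)n}G(x,y,z,t,s)$. On the kid $\omega_j$ they are again constant, with values $x_j,p_j,z_j,t_j,s_j$; set $y_j=m^{\alpha(n+1)}(y_0+p_j)$, so that $X_{n+1}|_{\omega_j}=m^{(1-\alpha)(n+1)}G(x_j,y_j,z_j,t_j,s_j)$. As the kids are equally weighted and partition $\omega$,
\[
\E\big(X_{n+1}\mid\F_n\big)\big|_\omega=\frac1m\sum_{j=1}^{m}X_{n+1}|_{\omega_j}=m^{(1-\alpha)n}m^{-\alpha}\sum_{j=1}^{m}G(x_j,y_j,z_j,t_j,s_j),
\]
so the inequality to be proved on $\omega$ is precisely $G(x,y,z,t,s)\geq m^{-\alpha}\sum_{j}G(x_j,y_j,z_j,t_j,s_j)$. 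This is~\eqref{MIG}, provided I show that $(\{x_j\},y,\{z_j\},\{t_j\},\{s_j\})\in\WW$, that $(x,y,z,t,s)$ is generated from it by~\eqref{SplittingRules}, and that the $y$-coordinate transforms by the affine rule of~\eqref{MIG}.

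The rest is to check those structural relations, one coordinate at a time. The identity $x=\avsum_{j}x_j$ together with $\vec x=(x_1-x,\dots,x_m-x)\in W$ is just the martingale property of $F$ plus the defining constraint of $\W$ --- indeed $\J_\omega^{-1}[dF_{n+1}|_\omega]=\vec x$ --- while the bound $|x_j|\leq z_j$ follows from $x_j=\E(F_\infty\mid\F_{n+1})|_{\omega_j}$. The identity $z=\avsum_{j}z_j$ is the tower property applied to $|F_\infty|$, and $t=\sum_j t_j$ is finite additivity of $\nu$. The transformation law for $y$ comes from the one-step recursion $(\TT_\alpha[F])_{n+1}|_{\omega_j}=(\TT_\alpha[F])_n|_\omega+m^{-\alpha n}(\varphi[\vec x])_j$ together with the rescaling by the appropriate power of $m^\alpha$. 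Finally, the rule for $s$ in~\eqref{SplittingRules} and the bound $t_j\leq s_j$ fall out of the definition of $M_n$ as a supremum over the descendants of $\omega$: the descendant $\omega$ itself is separated off and contributes the $t$-term, while the remaining descendants are regrouped according to which $\omega_j$ contains them, contributing the $\max_j s_j$-term. Once all of this is assembled, \eqref{MIG} applies on $\omega$; as $n$ and $\omega$ were arbitrary, \eqref{Process} is a supermartingale.

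The step I expect to be the only real obstacle is precisely this bookkeeping of the geometric weights $m^{-\alpha n}$, $m^{(1-\alpha)n}$, $m^{-n}$ across consecutive levels: one has to make sure that, after the rescalings hard-wired into~\eqref{Process}, the tuples handed to $G$ at levels $n$ and $n+1$ are related by~\eqref{SplittingRules} with no stray power of $m$ left over. This is the point to be watched for the $y$-coordinate, where the factor $m^{\alpha n}$ must cancel the $m^{-\alpha n}$ generated by $\TT_\alpha$ so that only the affine shift by $\varphi[\vec x]$ survives, and for the $s$-coordinate, where the supremum defining $M_n$ must regenerate the $\max$-rule of~\eqref{SplittingRules}. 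Everything else is routine.
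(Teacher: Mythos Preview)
Your proposal is correct and follows essentially the same route as the paper's proof: fix an atom~$\omega\in\AF_n$, read off the five coordinates at~$\omega$ and at each kid~$\omega_j$, check that they are linked by the splitting rules~\eqref{SplittingRules}, and then invoke~\eqref{MIG} to obtain the one-step supermartingale inequality. If anything, you are more explicit than the paper in verifying the domain constraints~$|x_j|\leq z_j$, $t_j\leq s_j$ and in articulating how the $\max$-rule for~$s$ emerges from splitting the supremum defining~$M_n$ into the contribution of~$\omega$ itself and the contributions regrouped by kids.
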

A sequence~$\{H_n\}_{n}$ of random variables is called a supermartingale provided each~$H_n$ is~$\F_n$-measurable (i.e.~$\{H_n\}_n$ is adapted to~$\F$) and for any~$n > 0$
\eq{
\E(H_{n+1}\mid \F_n)\leq H_n.
}
If~$\{H_n\}_n$ is a supermartingale, then 
\eq{\label{SupermartingaleProperty}
\E H_N \leq H_0
}
for any~$N$.

Proposition~\ref{SuperSolution} and Corollary~\ref{SuperSolutionCor} below are similar to Lemma~$3.9$ in~\cite{Stolyarov2021bis}.
\begin{proof}[Proof of Proposition~\ref{SuperSolution}]
Let~$w\in \AF_n$ and let~$w_1,w_2,\ldots, w_m$ be its kids in~$\AF_{n+1}$. Let
\mlt{
y = m^{\alpha n}\Big(y_0 + (\TT_\alpha[F])_{n}(w)\Big);\quad x_j = F_{n+1}(w_j),\ z_j = \E(|F_\infty|\mid \F_{n+1})(w_j),\\
t_j= \nu(w_j)=m^{-n-1}\nu_{n+1}(w_j),\ s_j = M_{n+1}(w_j),\qquad j \in [1\,..\,m].
}
By the martingale properties,
\eq{
x = F_n(w),\ z = \E(|F_\infty|\mid F_n)(w),\ t=\nu(w),\ s=M_n(w),\quad \text{and} \ \vec{x} = \J_{w}^{-1}[dF_{n+1}|_w],
}
satisfy the splitting rules~\eqref{SplittingRules}. In other words, the string~$(\X,y,\Z,\TTT,\SSS)$ formed by these numbers generated as above, belongs to the configuration space~$\WW$. Therefore, the main inequality~\eqref{MIG} applies, which means exactly the supermartingale property
\mlt{
m^{(1-\alpha)n} G\Big(F_n(w), m^{\alpha n}(y_0+ (\TT_\alpha [F])_n(w)), \E(|F_\infty|\mid \F_n)(w),m^{-n}\nu_n(w), M_n(w)\Big)\\ \geq m^{(1-\alpha)(n+1)}\E G\Big(F_{n+1}, m^{\alpha (n+1)}(y_0+ (\TT_\alpha [F])_{n+1}), \E(|F_\infty|\mid \F_{n+1}),m^{-n-1}\nu_{n+1}, M_{n+1}\Big)\chi_w
} 
for the process~\eqref{Process}; we have used the identity
\eq{
m^{\alpha (n+1)}(y_0+ (\TT_\alpha [F])_{n+1}(w_j)) = m^\alpha y + (\varphi[\vec{x}])_j \qquad \text{on}\ w_j.
}

\end{proof}
\begin{Cor}\label{SuperSolutionCor}
Suppose there exists a finite function~$G\colon \Omega \to \R$ that satisfies the main inequality~\eqref{MIG}\textup, the boundary inequality~$G(|x|,y,|x|,s,t) \geq |y|t$\textup, and the estimate
\eq{\label{EstFromAbove}
G(x,0,z,t,s) \lesssim zs.
}
Then\textup, the inequality~\eqref{BilinearTrace} holds true for all measures~$\nu$ that satisfy the~$(\alpha,1)$-Frostman condition.
\end{Cor}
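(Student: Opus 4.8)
The plan is to run the standard Bellman/Burkholder machinery: combine Proposition~\ref{SuperSolution} (which turns the main inequality into a supermartingale) with the supermartingale property~\eqref{SupermartingaleProperty}, then read off~\eqref{BilinearTrace} by comparing the value of the process at $n=0$ with its limit as $n\to\infty$.

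First I would fix a simple martingale $F\in\W$, normalize so that $y_0=0$, and consider the process~\eqref{Process} with this $G$. By Proposition~\ref{SuperSolution} it is a supermartingale, so by~\eqref{SupermartingaleProperty}, for every $N$,
\eq{\label{PlanMainLine}
\E\Big[m^{(1-\alpha)N} G\big(F_N, m^{\alpha N}(\TT_\alpha[F])_N, \E(|F_\infty|\mid \F_N), m^{-N}\nu_N, M_N\big)\Big] \leq G\big(F_0, 0, \E|F_\infty|, \nu(\T), M_0\big).
}
Next I would bound the right-hand side from above: since $F_0 = \E F_\infty$ has $|F_0|\leq \E|F_\infty|=:z$, and $M_0 = \sup_{N,v} m^{(1-\alpha)N}\nu(v) =: s$ is exactly the Morrey/ball-growth constant on the right of~\eqref{BilinearTrace}, the estimate~\eqref{EstFromAbove} gives $G(F_0,0,z,\nu(\T),s)\lesssim zs = \E|F_\infty|\cdot s$. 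Since $F$ is simple, $\E|F_\infty| = \sup_n\|F_n\|_{L_1} = \|F\|_{L_1}$ (for a simple martingale $F_\infty$ exists and equals $F_N$ for large $N$), so the right-hand side is $\lesssim s\,\|F\|_{L_1}$, which is the desired majorant.

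Then I would bound the left-hand side of~\eqref{PlanMainLine} from below. Here I use that $F$ is simple: for $N$ large enough $F$ is $\F_N$-measurable and $(\TT_\alpha[F])_N = \TT_\alpha[F]$, hence $\E(|F_\infty|\mid\F_N) = |F_N|$, while on each atom $v\in\AF_N$ one has $m^{-N}\nu_N = \nu(v) = M_N/m^{(1-\alpha)N}$ once $N$ is large (since for a simple $\nu$ the ball-growth supremum over descendants of a small atom is attained at that atom). Thus the argument of $G$ in the left-hand side has its first and third slots equal, so the boundary inequality $G(|x|,y,|x|,t,s)\geq |y|t$ applies pointwise on each atom $v\in\AF_N$ with $x=F_N(v)$, $y = m^{\alpha N}\TT_\alpha[F](v)$, $t = \nu(v)$, $s = M_N(v) = m^{(1-\alpha)N}\nu(v)$, giving
\eq{\label{PlanLowerLine}
m^{(1-\alpha)N}G\big(F_N, m^{\alpha N}\TT_\alpha[F], |F_N|, m^{-N}\nu_N, M_N\big) \geq m^{(1-\alpha)N}\cdot m^{\alpha N}|\TT_\alpha[F]|\cdot m^{-N}\nu_N = |\TT_\alpha[F]|\cdot \nu_N\, m^{-N}.
}
Taking expectation, the right-hand side of~\eqref{PlanLowerLine} integrates to $\int_\T |\TT_\alpha[F]|\,d\nu = \|\TT_\alpha[F]\|_{L_1(\nu)}$, since $\E[\nu_N m^{-N}\chi_v] = \nu(v)$ for atoms and $\TT_\alpha[F]$ is constant on each $v\in\AF_N$ for $N$ large. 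Chaining this with~\eqref{PlanMainLine} and the upper bound on its right-hand side yields $\|\TT_\alpha[F]\|_{L_1(\nu)}\lesssim s\,\|F\|_{L_1}$, i.e.~\eqref{BilinearTrace}, for simple martingales; the general case follows by the routine limiting argument mentioned in the text.

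The main obstacle I anticipate is purely bookkeeping rather than conceptual: one must choose $N$ large enough that $F$, $\TT_\alpha[F]$, and $\nu$ have all "stabilized" on $\AF_N$, and check carefully that for such $N$ the arguments of $G$ really sit on the boundary $\{z=|x|\}$ of $\Omega$ and that $M_N(v) = m^{(1-\alpha)N}\nu(v)$ there — this is where the simplicity of $\nu$ (and hence finiteness of the relevant filtration depth) is genuinely used, and where the $(\alpha,1)$-Frostman hypothesis guarantees $s<\infty$ so the right-hand side is finite. A minor point to verify is that the normalization $y_0=0$ loses nothing and that the constant absorbed in $\lesssim$ in~\eqref{EstFromAbove} is exactly the constant appearing in~\eqref{BilinearTrace}; no optimization over $G$ is needed since $G$ is given.
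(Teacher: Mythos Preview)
Your proposal is correct and follows essentially the same route as the paper: apply Proposition~\ref{SuperSolution} to get the supermartingale, bound the value at time~$0$ from above via~\eqref{EstFromAbove}, and bound the value at large~$N$ from below via the boundary inequality (using that for simple~$F$ and~$\nu$ the argument lands on~$\{z=|x|\}$). The paper packages the argument as showing~$\Bell\le G$ on~$\Omega$ and then reading off~\eqref{BilinearTrace} from~$\Bell(x,0,z,t,s)\lesssim zs$, but the underlying computation is identical to yours; your extra verification that~$M_N(v)=m^{(1-\alpha)N}\nu(v)$ for large~$N$ is not strictly needed since the boundary inequality is insensitive to the~$s$-slot.
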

\begin{proof}
By Definition~\ref{Def32}, it suffices to prove the estimate
\eq{\label{BellmanInequality}
\Bell(x,0,z,t,s)\lesssim zs.
}
In fact, we will prove that~$\Bell \leq G$ on~$\Omega$. Let~$(x,y,z,t,s)\in\Omega$ and let~$(F,\nu)$ be a pair of a simple martingale and a measure generated by a simple martingale that fulfills the requirements in~\eqref{Bell} for these fixed~$x,z,t,s$. By Proposition~\ref{SuperSolution}, the corresponding process~\eqref{Process} is a supermartingale. Therefore,
\mlt{
G(x,y,z,t,s) =  G(F_0,y+\TT_\alpha [F]_0, \E|F_\infty|,\nu_0, M_0)\\ \Geqref{SupermartingaleProperty} m^{(1-\alpha)N}\E  G\Big(F_N, m^{\alpha N}(y+ (\TT_\alpha [F])_N), \E(|F_\infty|\mid \F_N),m^{-N}\nu_N, M_N\Big),
}
for any~$N > 0$. Let us pick sufficiently large~$N$ (recall we are working with simple martingales). By the boundary inequality, the latter expression is bounded from below by
\eq{
m^{(1-\alpha)N}\E m^{\alpha N}|y +(\TT_\alpha[F])_N|m^{-N}\nu_N = \E |y +(\TT_\alpha[F])_N|\nu_N = \int\limits_{\T}|y +\TT_\alpha[F]|\,d\nu.
}
Since~$F$ and~$\nu$ are arbitrary, we get
\eq{
\Bell(x,y,z,t,s) \leq G(x,y,z,t,s),
}
which yields~\eqref{BellmanInequality}.
\end{proof}
\begin{Rem}
It is unclear whether the estimate~\eqref{EstFromAbove} in the above corollary is necessary. Maybe\textup, it is possible to derive~\eqref{EstFromAbove} from the finiteness of~$G$ and the validity of the main inequality for it.
\end{Rem}
\begin{Def}
The functions~$G$ that satisfy the main inequality and the boundary inequality are usually called \emph{supersolutions}.
\end{Def}

\section{Subcritical case}\label{S3}
Let~$v\in V$ be a vector. Consider the function~$\kappa_v\colon(0,1]\to \mathbb{R}$ given by the formula
\begin{equation*}
\kappa_v(\theta) = \theta\log\Big(\avsum\limits_{j=1}^{m}|1+v_j|^{\frac{1}{\theta}}\Big) = \log\|{\bf 1}+ v\|_{L_{\frac{1}{\theta}}}.
\end{equation*}
We may extend this function to~$0$ by continuity. Then,
\eq{\label{Kappa0}
\kappa_v(0) = \log \Big(\max\limits_{j=1}^m|1+v_j|\Big).
}
By H\"older's inequality, this function is convex and non-increasing. If, in addition,~$v_j \geq -1$ for any~$j$, this function also satisfies~$\kappa_v(1) = 0$. Consider yet another function~$\kappa\colon [0,1]\to \R$:
\begin{equation}\label{KappaDefinition}
\kappa(\theta) = \sup\Big\{\kappa_v(\theta)\;\Big|\, \exists a\in\mathbb{R}^\ell\setminus\{0\} \hbox{ such that } v\otimes a\in W \hbox{ and } \forall j \quad v_j \geq -1\Big\}.
\end{equation}
The function~$\kappa$ is also convex, non-increasing, and satisfies~$\kappa(1) = 0$.
%\begin{Rem}\label{StrictInequality}
%	The second structural condition in the previous subsection is equivalent to 
%	\begin{equation*}
%	\kappa\big(p^{-1}\big) < \frac{p-1}{p}\log m
%	\end{equation*}
%	for any~$p \in (1,\infty]$.
%\end{Rem}
As an elementary computation shows,
\begin{equation}\label{FormulaForDerivativeOfKappa}
\kappa'(1) = \inf\Big\{-\avsum\limits_{j=1}^{m}(1+v_j)\log(1+v_j)\;\Big|\, \exists a\in\mathbb{R}^\ell\setminus \{0\} \hbox{ such that } v\otimes a\in W \hbox{ and }
 \forall j \quad v_j \geq -1\Big\}.
\end{equation}
We cite Theorem~$6.2$ from~\cite{ASW2018} (there was no operator~$\varphi$ in~\cite{ASW2018}; the theorem below is not sensitive to~$\varphi$).
\begin{Th}\label{TraceTheorem}
	Assume~$\nu$ satisfies the~$(\alpha,1)$ Frostman condition and
	\begin{equation}\label{LowerBoundForAlpha}
	\alpha > -\frac{\kappa'(1)}{\log m}.
	\end{equation}
	Then\textup,~\eqref{BilinearTrace} holds true with any operator~$\varphi$.
\end{Th}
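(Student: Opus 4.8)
The plan is to build an explicit supersolution $G$ in the sense of Corollary~\ref{SuperSolutionCor} and then invoke that corollary. Since the parameter~$s$ controls the Frostman growth multiplicatively and the statement is the bilinear inequality~\eqref{BilinearTrace}, the natural ansatz is a function that is homogeneous in~$s$: I would look for
\eq{
G(x,y,z,t,s) = s\cdot B\Big(\frac{t}{s}; z, y\Big) + (\text{lower order in } z)
}
but more concretely, guided by the fact that the subcritical regime~\eqref{LowerBoundForAlpha} leaves room to spare, I expect a product-type supersolution of the shape
\eq{
G(x,y,z,t,s) = C\,s\Big(z + |y|\,\psi(t/s)\Big)
}
to work, where~$\psi\colon[0,1]\to\R_+$ is a concave increasing function with~$\psi(0)=0$, to be chosen. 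The boundary inequality~$G(x,y,|x|,t,s)\ge|y|t$ reads~$Cs(|x|+|y|\psi(t/s))\ge|y|t$; since~$t\le s$ and taking~$\psi$ with~$\psi(\tau)\ge\tau$ near~$0$ (or just~$\psi(\tau)=\tau^\beta$ for suitable~$\beta\le 1$) this holds once~$C\ge 1$. The estimate~\eqref{EstFromAbove} is immediate: $G(x,0,z,t,s)=Csz\lesssim zs$. So the entire burden falls on the main inequality~\eqref{MIG}.

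The core computation is therefore to verify
\eq{\label{proofplan:MIG}
G(x,y,z,t,s)\ge m^{-\alpha}\sum_{j=1}^m G\big(x_j, m^\alpha y+(\varphi[\vec x])_j, z_j, t_j, s_j\big)
}
for all configurations in~$\WW$. The term involving~$z$ splits off cleanly: using~$z=\avsum z_j$, $t=\sum t_j$, and~$s\ge m^{1-\alpha}\max_j s_j\ge m^{1-\alpha}s_j$, one gets~$m^{-\alpha}\sum_j Cs_j z_j\le m^{-\alpha}\cdot\frac{s}{m^{1-\alpha}}\sum_j z_j = \frac{s}{m}\sum_j z_j = Csz$ — wait, more carefully, $m^{-\alpha}\sum_j s_j z_j \le m^{-\alpha}\max_j s_j\sum_j z_j\le m^{-\alpha}\cdot s m^{\alpha-1}\cdot m\avsum z_j = sz$, so the $z$-part of the right side is~$\le Csz$, exactly the $z$-part of the left side. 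Thus~\eqref{proofplan:MIG} reduces to the scalar inequality for the $|y|\psi$ part:
\eq{
Cs|y|\psi(t/s)\ge m^{-\alpha}\sum_{j=1}^m C s_j\,|m^\alpha y + (\varphi[\vec x])_j|\,\psi(t_j/s_j),
}
and since~$\psi(t_j/s_j)\le\psi(1)=:c_0$ bounded, and we may also exploit~$|m^\alpha y+(\varphi[\vec x])_j|\le m^\alpha|y| + |(\varphi[\vec x])_j|$, the problematic piece is the one with~$|(\varphi[\vec x])_j|$. Here is where the subcriticality is used: by the crude bound~$|(\varphi[\vec x])_j|\lesssim|\vec x|\lesssim\sum_j|x_j - x|\lesssim\sum_j z_j$, this term is again absorbed into a~$z$-type term, but now with an extra factor~$m^{-\alpha}\cdot m^{1-\alpha}\cdot(\text{const})$ against the available~$s\psi(t/s)$, and the strict inequality~\eqref{LowerBoundForAlpha} — equivalently~$m^{-\alpha}\sum_j\psi(\cdot)<\psi(\cdot)$ for an appropriate concave~$\psi$, which is precisely what the gap between~$\alpha$ and~$-\kappa'(1)/\log m$ buys — makes the geometric-series bookkeeping close.

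\textbf{Main obstacle.} The delicate point is the interaction between the $|y|$ term (which requires~$m^{-\alpha}\cdot m^\alpha\cdot$(average of~$\psi$)$\le\psi$, i.e. a concavity/Jensen statement about~$\psi$ along the splitting~$t=\sum t_j$, $s=\max(t,m^{1-\alpha}\max s_j)$) and the cancellation-generating~$\varphi[\vec x]$ term. The function~$\kappa$ from~\eqref{KappaDefinition} enters exactly because the worst case for the~$|y|$-part is when all mass concentrates along a single extremal vector~$v\otimes a\in W$, and then~$m^{-\alpha}\sum_j|1+v_j/(\text{scale})|^{\cdots}$ is governed by~$\|\mathbf 1 + v\|_{L_{1/\theta}}=e^{\kappa_v(\theta)}$; the requirement~$\alpha\log m > -\kappa'(1)$ is the infinitesimal (near~$\theta=1$) form of the needed inequality~$\alpha\log m + \kappa_v(\theta)\le 0$ holding in a neighborhood of~$\theta = 1$, which — because~$\kappa_v$ is convex with~$\kappa_v(1)=0$ — it does. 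I expect the cleanest route is actually to forgo the explicit~$\psi$ and instead prove~\eqref{MIG} directly for
\eq{
G(x,y,z,t,s) = C_1 sz + C_2\, s^{1-\theta_0} t^{\theta_0}\,|y|
}
for a fixed~$\theta_0\in(0,1)$ chosen with~$\alpha\log m + \sup_v\kappa_v'(\theta_0)\cdot(\ldots) < 0$ (possible by~\eqref{LowerBoundForAlpha} and continuity of~$\kappa'$ at~$1$), splitting the verification into the routine~$z$-absorption above and the genuinely-one-variable estimate for the~$s^{1-\theta_0}t^{\theta_0}|y|$ term, where the power-mean inequality and the definition of~$\kappa(\theta_0)$ do the work; constants~$C_1, C_2$ are then fixed to also satisfy the boundary inequality. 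Checking that a single~$\theta_0$ can be chosen uniformly over all admissible~$v$ — i.e. that~$\sup_v$ of the relevant quantity is still~$<0$ for~$\theta_0$ close enough to~$1$, not merely in the limit — is the one spot where I would be careful, but it follows from~\eqref{FormulaForDerivativeOfKappa} together with the compactness one gets after normalizing~$|a|=1$ and noting~$v_j\ge-1$ forces~$v$ into a bounded set.
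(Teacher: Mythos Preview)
Your proposed supersolution $G = C_1 sz + C_2\, s^{1-\theta_0}t^{\theta_0}|y|$ does not satisfy the main inequality~\eqref{MIG}. Take $y=0$ and $s_j = m^{\alpha-1}s$ for every~$j$ (this is admissible: then $m^{1-\alpha}\max_j s_j = s$ and the splitting rules only force $t\le s$). The discrepancy of $C_1 sz$ vanishes identically, since $m^{-\alpha}\sum_j s_j z_j = m^{-\alpha}\cdot m^{\alpha-1}s\cdot mz = sz$; there is no slack to borrow, regardless of how large~$C_1$ is. The discrepancy of the second term at $y=0$ is $-C_2 m^{-\alpha}\sum_j s_j^{1-\theta_0}t_j^{\theta_0}|(\varphi[\vec x])_j|$, which is strictly negative whenever $\varphi[\vec x]\ne 0$ and some $t_j>0$. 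So $\NV[G]<0$. Even the homogeneous $|y|$-contribution goes the wrong way: with the same $s_j$ and $t_j = t/m$ one computes $\sum_j s_j^{1-\theta_0}t_j^{\theta_0} = m^{\alpha(1-\theta_0)}s^{1-\theta_0}t^{\theta_0} > s^{1-\theta_0}t^{\theta_0}$, so the ``gain from concavity'' you invoke is in fact a loss for any $\theta_0<1$.

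The structural problem is that your $G$ depends only on $(y,z,t,s)$, never on $|x|$, whereas the hypothesis~\eqref{LowerBoundForAlpha} and the function $\kappa$ are purely statements about the $x$-variable: $\kappa$ controls $(\avsum|x_j|^p)^{1/p}$ against $|x|$ when $\vec x\in W$ is nearly rank-one. Without $|x|$ in the ansatz there is nowhere for $\kappa$---and hence for the subcriticality assumption---to enter. The absorption you sketch, ``$|(\varphi[\vec x])_j|\lesssim \sum_j z_j$, absorbed into a $z$-type term'', fails precisely because the $sz$ term has zero discrepancy in the worst case computed above.

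The paper's supersolution is $|y|t + M_1|x|t^{1-\theta}s^\theta + M_2(z-|x|)s$ with $\theta$ close to~$1$. The two essential differences from your ansatz are: the middle term carries $|x|$ rather than $|y|$, so after H\"older its discrepancy is governed by $|x| - m^{-\alpha(1-\theta)}(\avsum|x_j|^p)^{1/p}$, and \emph{this} is where $\kappa$ and~\eqref{LowerBoundForAlpha} are actually used; and the last term is $(z-|x|)s$ rather than $zs$, which produces the strictly positive excess $(\avsum|x_j|-|x|)s$ that is large exactly when the configuration $\X$ is not flat. The flat/convex dichotomy between these two terms then absorbs the error $|\NV[|y|t]|\lesssim |\X|t$ generated by $\varphi[\vec x]$.
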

Let us give the Bellman function proof of this theorem (it relies upon the same principles as the original one in~\cite{ASW2018}, but uses different language).
\begin{Th}\label{SubcriticalTheorem}
Assume~\eqref{LowerBoundForAlpha}. Then\textup, for any~$\theta \in (0,1)$ sufficiently close to one, the function
\eq{\label{SubcriticalSupersolution}
G(x,y,z,t,s) = |y|t+ M_1|x|t^{1-\theta}s^{\theta} + M_2 (z-|x|)s
}
is a supersolution\textup, provided~$1\ll M_1 \ll M_2$ (depending on~$\theta$).
\end{Th}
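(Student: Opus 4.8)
Below is how I would approach proving the statement.

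The plan is to verify, in turn, the two properties that make $G$ a supersolution. The boundary inequality is immediate: the three summands of $G$ are non-negative and the last vanishes at $z=|x|$, so $G(x,y,|x|,t,s)=|y|t+M_1|x|t^{1-\theta}s^\theta\ge|y|t$; the same computation (together with $t^{1-\theta}s^\theta\le s$ and $|x|\le z$) gives $G(x,0,z,t,s)\le(M_1+M_2)zs$, the upper bound~\eqref{EstFromAbove} that later allows one to invoke Corollary~\ref{SuperSolutionCor}. All the content is therefore in the main inequality~\eqref{MIG}.

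I would first reduce~\eqref{MIG} to a scalar inequality. Since $G$ is non-decreasing in $s$, each $s_j$ on the right-hand side may be enlarged to the maximal admissible value $m^{\alpha-1}s$; after this substitution the $M_2 zs$-terms cancel between the two sides. Using $\bigl||m^\alpha y+c_j|-m^\alpha|y|\bigr|\le|c_j|$ with $c_j:=(\varphi[\vec{x}])_j$ and $\sum_j t_j=t$, the part of~\eqref{MIG} depending on $y$ contributes at least $-D$, where $D:=m^{-\alpha}\sum_j|(\varphi[\vec{x}])_j|t_j$. The remaining inequality is homogeneous of degree one separately in $(x,\vec{x})$ and in $(t,s,\{t_j\})$, so I normalise $s=1$ (hence $\sum_j t_j=t\le1$ and $t_j\le m^{\alpha-1}$) and scale $(x,\vec{x})$ conveniently in the two cases below. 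It then suffices to prove
\eq{
M_2\Big(\avsum_j|x_j|-|x|\Big)+M_1\Big(|x|t^{1-\theta}-\lambda\sum_j|x_j|t_j^{1-\theta}\Big)\ge D,\qquad\lambda:=m^{-\alpha(1-\theta)-\theta},
}
with $x_j=x+(\vec{x})_j$ and $\vec{x}\in W$. Finally, Hölder's inequality with exponents $1/\theta$ and $1/(1-\theta)$ gives $\sum_j|x_j|t_j^{1-\theta}\le m^\theta\Phi\,t^{1-\theta}$, where $\Phi:=\bigl(\avsum_j|x_j|^{1/\theta}\bigr)^\theta$, so the bracket multiplied by $M_1$ is at least $t^{1-\theta}\bigl(|x|-m^{-\alpha(1-\theta)}\Phi\bigr)$; here one uses the exponent identities $\lambda m^\theta=m^{-\alpha(1-\theta)}$ and $\lambda m^{1+(\alpha-1)(1-\theta)}=1$.

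Now I would split into two regimes, writing $\|\vec{x}\|:=\max_j|x_j-x|$. If $\avsum_j|x_j|\ge 2|x|$ (in particular whenever $x=0$), normalise $\avsum_j|x_j|=1$; then $|x|\le\tfrac12$, $\|\vec{x}\|\le 2m$, the first term is $\ge M_2/2$, and, using the two exponent identities above and $|(\varphi[v])_j|\le C_\varphi\|v\|$, the rest of the left-hand side together with $D$ is bounded in absolute value by a constant times $M_1+C_\varphi$; so $M_2\gg M_1,C_\varphi$ settles this case, with no appeal to subcriticality. In the complementary regime $|x|=1$, $\avsum_j|x_j|<2$, which forces $\|\vec{x}\|\le R:=2m+1$, everything hinges on the inequality
\eq{
m^{-\alpha(1-\theta)}\Phi\le 1-\delta_0+C_1\Big(\avsum_j|x_j|-1\Big)\qquad\text{whenever }|x|=1,\ \vec{x}\in W,\ \|\vec{x}\|\le R,
}
for suitable $\delta_0=\delta_0(\theta)>0$ and $C_1=C_1(\theta)$. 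Granting it, the left-hand side of the reduced inequality is at least $(M_2-M_1C_1)\bigl(\avsum_j|x_j|-1\bigr)+M_1t^{1-\theta}\delta_0\ge M_1t\delta_0$ as soon as $M_2\ge M_1C_1$ (using $\avsum_j|x_j|\ge1$ and $t\le t^{1-\theta}\le1$), and $M_1t\delta_0$ dominates $D\le C_\varphi t\|\vec{x}\|\le C_\varphi tR$ once $M_1\ge C_\varphi R/\delta_0$. Thus the whole argument closes with $1\ll M_1\ll M_2$ depending on $\theta$ and on $\varphi$.

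The hard part is the last displayed inequality, where the hypothesis~\eqref{LowerBoundForAlpha} is essential. The set of configurations with $|x|=1$, $\|\vec{x}\|\le R$ and $\avsum_j|x_j|=1$ is compact, and on it equality in $\avsum_j|x_j|=|\avsum_j x_j|$ forces $x_j=r_j x$ with $r_j\ge0$ and $\avsum_j r_j=1$; thus $\vec{x}=c\otimes x$ with $c\in V$, $c_j=r_j-1\ge-1$ and $c\otimes x\in W$, so $c$ is admissible in~\eqref{KappaDefinition} and there $\Phi=\bigl(\avsum_j|1+c_j|^{1/\theta}\bigr)^\theta=e^{\kappa_c(\theta)}\le e^{\kappa(\theta)}$. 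To convert this into a strict gain I use subcriticality: the set of $v$ entering~\eqref{KappaDefinition} is compact (it lies in the bounded polytope $\{v\in V:v_j\ge-1\}$) and every $\kappa_v$ vanishes at $\theta=1$, so a Taylor expansion at $\theta=1$ with remainder uniform in $v$ gives $\kappa(\theta)=|\kappa'(1)|(1-\theta)+O((1-\theta)^2)$, whence
\eq{
m^{-\alpha(1-\theta)}e^{\kappa(\theta)}=\exp\Bigl((1-\theta)\bigl(|\kappa'(1)|-\alpha\log m\bigr)+O((1-\theta)^2)\Bigr)<1
}
for $\theta$ close enough to $1$, by~\eqref{LowerBoundForAlpha}. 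Set $\delta_0:=\tfrac12\bigl(1-m^{-\alpha(1-\theta)}e^{\kappa(\theta)}\bigr)>0$. Then $m^{-\alpha(1-\theta)}\Phi\le1-2\delta_0$ on the locus $\avsum_j|x_j|=1$, hence, by continuity, below $1-\delta_0$ on a neighbourhood of that locus, while on the complement of this neighbourhood inside $\{\|\vec{x}\|\le R\}$ the quantity $\avsum_j|x_j|-1$ is bounded below by a positive constant and $\Phi$ is bounded above, so a sufficiently large $C_1$ makes the desired inequality hold throughout. I expect this passage — where the geometry of $W$ encoded in $\kappa$ meets the subcriticality threshold, glued together by a compactness argument on the bounded region $\|\vec{x}\|\le R$ — to be the only genuinely delicate step; the rest is bookkeeping.
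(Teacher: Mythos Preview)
Your proof is correct and follows essentially the same route as the paper's: the H\"older estimate with exponent $1/\theta$ for the middle summand, the flat/convex split according to whether $\avsum_j|x_j|$ is comparable to $|x|$, and the reduction to $\kappa(\theta)$ on the flat locus. The differences are cosmetic --- you first pass to the extremal choice $s_j=m^{\alpha-1}s$ by monotonicity (the paper instead computes $\NV[(z-|x|)s]$ directly, as in~\eqref{MainDiscr}) and you supply the flat-case bound by a self-contained compactness argument where the paper quotes Lemma~2.1 of~\cite{ASW2018}; note also that your Taylor step can be shortened, since convexity of $\kappa$ alone makes $\kappa(\theta)/(1-\theta)$ non-increasing with limit $-\kappa'(1)$ as $\theta\to1^-$.
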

The notation~$\ll$ means the statement holds true when~$M_1$ is sufficiently large and~$M_2$ is also sufficiently large (depending on the particular choice of~$M_1$). Note that Theorem~\ref{SubcriticalTheorem} implies Theorem~\ref{TraceTheorem} via~Corollary~\ref{SuperSolutionCor}.
\begin{Def}
If~$G\colon \Omega \to \R$ is a function\textup, then its discrepancy~$\NV[G]\colon \WW \to \R$ is
\eq{
\NV[G](\X,y,\Z,\TTT,\SSS) = G(x,y,z,t,s) - m^{-\alpha}\sum\limits_{j=1}^m G(x_j,m^\alpha y + (\varphi[\vec{x}])_j,z_j,t_j,s_j).
}
The main inequality says that~$\NV[G] \geq 0$ on~$\WW$. 
\end{Def}
\begin{proof}[Proof of Theorem~\ref{SubcriticalTheorem}]
We wish to prove
\eq{
\NV\Big[|y|t+ M_1|x|t^{1-\theta}s^{\theta} + M_2 (z-|x|)s\Big] \geq 0.
}
Let us first compute~$\NV[(z-|x|)s]$:
\mlt{\label{MainDiscr}
\NV[(z-|x|)s] = (z-|x|)s - m^{-\alpha}\sum\limits_{j=1}^m(z_j-|x_j|)s_j \\= \Big(\avsum\limits_{j=1}^{m}z_j - \avsum\limits_{j=1}^{m}|x_j|\Big)s+ \Big(\avsum\limits_{j=1}^{m}|x_j| - |x|\Big)s -m^{-\alpha}\sum\limits_{j=1}^m(z_j-|x_j|)s_j\\=
\Big(\avsum\limits_{j=1}^{m}|x_j| - |x|\Big)s + \sum\limits_{j=1}^m(z_j - |x_j|)(m^{-1}s - m^{-\alpha}s_j).
}
We see that this expression is always non-negative due to the splitting rules~\eqref{SplittingRules}. What is more, 
\eq{\label{ConvexAtom}
\NV[(z-|x|)s] \gtrsim |\X|s,
}
provided
\eq{\label{Flatness}
\avsum\limits_{j=1}^{m}|x_j| \geq (1+\eps)|x|
} 
for some fixed small~$\eps$ (the notation~$|\X|$ means the Euclidean norm of~$\X = (x_1,x_2,\ldots,x_m) \in \R^{\ell m}$). Now we turn to the function~$|x|t^{1-\theta}s^{\theta}$ and estimate its discrepancy. Let~$p$ be such that~$(1-\theta)p'=1$, where~$1/p + 1/p'=1$. Then,
\mlt{\label{Holder}
m^{-\alpha}\sum\limits_{j=1}^m|x_j|t_j^{1-\theta}s_j^\theta = m^{1-\alpha}\avsum\limits_{j=1}^{m}|x_j|t_j^{1-\theta}s_j^\theta \leq m^{1-\alpha}\Big(\avsum\limits_{j=1}^{m}|x_j|^p\Big)^{1/p}\Big(\avsum\limits_{j=1}^{m} t_j^{(1-\theta)p'}s_j^{\theta p'}\Big)^{1/p'}\\ \Lref{\scriptscriptstyle (1-\theta)p'=1}
m^{1-\alpha} \Big(\avsum\limits_{j=1}^{m} |x_j|^p\Big)^{1/p}\Big(\frac{t}{m}(m^{(\alpha-1)} s)^{\theta p'}\Big)^{1/p'} = m^{(1-\alpha)(1-\theta) - (1-\theta)}\Big(\avsum\limits_{j=1}^{m} |x_j|^p\Big)^{1/p} t^{1-\theta}s^{\theta}.
}
Thus, if
\eq{\label{LpIncr}
m^{-\alpha(1-\theta)}\Big(\avsum\limits_{j=1}^{m} |x_j|^p\Big)^{1/p} \leq |x|,
}
then~$\NV[|x|t^{1-\theta}s^{\theta}] \geq 0$, i.e., the discrepancy of the second term in~\eqref{SubcriticalSupersolution} is non-negative. It was proved in Lemma $2.1$ of \cite{ASW2018} that, given~$p\in(1,\infty]$, for any~$\tilde{\delta} > 0$ there exists~$\eps > 0$ such that 
\eq{
e^{-\kappa(p^{-1})} \Big(\avsum\limits_{j=1}^{m} |x_j|^p\Big)^{1/p} \leq (1+\tilde{\delta})|x|,\qquad \vec{x}\in W,
}
provided~\eqref{Flatness} is violated. Therefore, if~$\alpha > -\kappa'(1)/\log m$,~$p$ is sufficiently close to one and~$\eps$ is sufficiently small, then there exists a tiny~$\delta$ such that
\eq{
m^{-\frac{\alpha p}{p-1}}\Big(\avsum\limits_{j=1}^{m} |x_j|^p\Big)^{1/p} \leq (1-\delta)|x|
}
whenever~$\vec{x}\in W$ and~$\avsum_{j=1}^{m}|x_j| \leq (1+\eps)|x|$, which is slightly stronger than~\eqref{LpIncr}.  Therefore,
\eq{\label{FlatAtom}
\NV[|x|t^{1-\theta}s^{\theta}] \gtrsim |\X|t^{1-\theta}s^{\theta},
}
provided~\eqref{Flatness} is violated with sufficiently small~$\eps$. We fix such an~$\eps$.%It remains to apply the standard reasoning that will be further referred to as {\bf `compactness' argument}. Here it is.

Combining~\eqref{ConvexAtom},~\eqref{FlatAtom}, and the simple estimate
\eq{\label{SimpleConvex}
\Big|\NV[|x|t^{1-\theta}s^{\theta}]\Big| \lesssim |\X|t^{1-\theta}s^{\theta},
} 
we obtain
\eq{\label{Discr}
\NV[|x|t^{1-\theta}s^{\theta} + M (z-|x|)s] \gtrsim |\X|t^{1-\theta}s^{\theta},
} 
provided~$M$ is sufficiently large (note that~$t^{1-\theta}s^{\theta} \leq s$). Indeed, in the case~\eqref{Flatness} holds true,~\eqref{Discr} follows from~\eqref{ConvexAtom} and~\eqref{SimpleConvex} by the choice of sufficiently large~$M$; in the case~\eqref{Flatness} is violated, we rely upon the positivity of~$\NV[(z-|x|)s]$ and~\eqref{FlatAtom}\footnote{We will refer to this simple reasoning as to the flat/convex argument (see the explanation after the proof of the lemma).}. 

Thus, it remains to prove
\eq{
\big|\NV[|y|t]\big| \lesssim |\X|t.
}
This follows from direct computation
\eq{%\label{discroftr}
\NV[|y|t] = \sum\limits_{j=1}^m\Big(|y| - |y+m^{-\alpha}(\varphi[\vec{x}])_j|\Big)t_j
}
and the triangle inequality.
\end{proof}
\begin{Rem}
Lemma~$6.2$ in~\textup{\cite{ASW2018}} says that if the function~$\kappa$ is affine and~\eqref{Trace} holds true with any~$\varphi$\textup, then~\eqref{LowerBoundForAlpha} is also true.
\end{Rem}
In~\cite{ASW2018}, condition~\eqref{Flatness} played the pivotal role (see Definition~$3.1$ in that paper). The atoms for which the corresponding quantities violated it were called~$\eps$-flat\footnote{The term 'flat' symbolizes that the martingale develops along the flat part of the boundary of the ball of~$L_1$.}. The core of the reasoning is contained in Lemma~$2.1$ of~\cite{ASW2018}, which manifests the principle that flat atoms (or flat configurations~$\X = (x_1,x_2,\ldots,x_m)$ such that~$\vec{x} \in W$) lie close to rank-one configurations.
It is convenient to introduce the set of admissible~$0$-flat increments:
\eq{\label{MW}
\M^W = \Set{v\in V}{\forall j \in [1\,..\,m]\quad v_j \geq -1 \text{ and } \exists a\in \R^\ell \setminus \{0\} \text{ such that } v\otimes a\in W}.
}
See~\cite{Stolyarov2020} for 'translation' of these notions into the language of the Euclidean space (instead of~$\T$). The following lemma justifies the heuristic principle that flat configurations are close to rank-one configurations. 
\begin{Le}\label{FlatRankOne}
Fix~$W$. Let~$U$ be a neighborhood of the set
\eq{\label{SetU}
\Set{\X \in \R^m\otimes \R^\ell}{\exists\ v \in \M^W, a\in \R^\ell \setminus \{0\}\quad \text{such that}\ \vec{x} = v\otimes a, x=a}.
}
For sufficiently small~$\eps$\textup, the violation of~\textup{\eqref{Flatness}} together with the conditions~$|x|=1$ and~$\vec{x} \in W$ implies~$\X\in U$.
\end{Le}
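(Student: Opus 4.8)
The plan is to argue by contradiction via a compactness argument. Suppose the conclusion fails. Then for every $\eps > 0$ there is a configuration $\X^{(\eps)} = (x_1^{(\eps)},\ldots,x_m^{(\eps)}) \in \R^m\otimes\R^\ell$ with $|x^{(\eps)}| = 1$ (here $x^{(\eps)} = \avsum_j x_j^{(\eps)}$), with $\vec{x}^{(\eps)} \in W$, with $\avsum_j |x_j^{(\eps)}| < (1+\eps)|x^{(\eps)}| = 1+\eps$, and yet $\X^{(\eps)} \notin U$. I would first check that the whole family $\{\X^{(\eps)}\}$ is bounded: since $|x^{(\eps)}| = 1$ and $\avsum_j|x_j^{(\eps)}|$ is bounded, and the $x_j^{(\eps)}$ are controlled by $\sum_j |x_j^{(\eps)}|$, the configurations live in a fixed compact set. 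Take $\eps = \eps_n \to 0$ and pass to a convergent subsequence $\X^{(\eps_n)} \to \X^\infty = (x_1^\infty,\ldots,x_m^\infty)$; write $x^\infty = \avsum_j x_j^\infty$. The three properties are closed conditions, so in the limit $|x^\infty| = 1$, $\vec{x}^\infty \in W$ (as $W$ is a linear, hence closed, subspace), and $\avsum_j|x_j^\infty| \le 1 = |x^\infty|$.

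The next step is the heart of the matter: I claim $\avsum_j |x_j^\infty| \le |x^\infty|$ together with $x^\infty = \avsum_j x_j^\infty$ forces $\X^\infty$ to be a rank-one configuration of the special form appearing in~\eqref{SetU}. Indeed, by the triangle inequality $|x^\infty| = |\avsum_j x_j^\infty| \le \avsum_j |x_j^\infty|$ always, so we have equality $\avsum_j|x_j^\infty| = |x^\infty|$; equality in the triangle inequality for the average forces all nonzero $x_j^\infty$ to be nonnegative scalar multiples of a common unit vector $a$, and since $x^\infty = \avsum_j x_j^\infty \ne 0$, that vector is (a positive multiple of) $x^\infty$; normalizing, $a := x^\infty$, and $x_j^\infty = (1+v_j)\,a$ for scalars $v_j \ge -1$ (the zero terms corresponding to $v_j = -1$). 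Then $\avsum_j x_j^\infty = a$ gives $\avsum_j (1+v_j) = 1$, i.e. $\sum_j v_j = 0$, so $v := (v_1,\ldots,v_m) \in V$; moreover $\vec{x}^\infty = v\otimes a$ and this lies in $W$ by the limiting property, so with $a \ne 0$ we get $v \otimes a \in W$, which together with $v_j \ge -1$ means $v \in \M^W$. Hence $\X^\infty$ is exactly a point of the set in~\eqref{SetU}, so $\X^\infty \in U$ since $U$ is a neighborhood of that set.

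Finally, $U$ being open and $\X^{(\eps_n)} \to \X^\infty \in U$ forces $\X^{(\eps_n)} \in U$ for all large $n$, contradicting $\X^{(\eps_n)} \notin U$. This completes the proof. The main obstacle I anticipate is the equality-case analysis in the triangle inequality in the second paragraph — one has to be careful that "all nonzero terms are nonnegative multiples of a common unit vector" is genuinely what equality gives (it is, since $\avsum_j |x_j| = |\avsum_j x_j|$ with the right-hand side nonzero is a strict convexity statement for the Euclidean norm), and that the degenerate terms $x_j^\infty = 0$ are correctly absorbed by allowing $v_j = -1$, which is permitted in the definition~\eqref{MW} of $\M^W$. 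A secondary point worth a sentence is confirming that the family $\{\X^{(\eps)}\}_{\eps \le 1}$ really is precompact, which follows because $|x_j^{(\eps)}| \le \sum_k |x_k^{(\eps)}| \le m(1+\eps) \le 2m$.
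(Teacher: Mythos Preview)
Your proof is correct and follows essentially the same compactness-and-contradiction argument as the paper: bound the configurations, extract a limit, use equality in the triangle inequality to recognize the limit as a rank-one point of the set~\eqref{SetU}, and contradict openness of~$U$. You supply more detail on the equality case (writing $x_j^\infty=(1+v_j)a$ and checking $v\in\M^W$), which the paper compresses into a single sentence, but the structure is identical.
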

\begin{proof}
Note that if~$\X = \{x_j\}_{j=1}^m$ violates~\eqref{Flatness} and~$|x|=1$, then~$|x_j|\leq m+1$ for every~$j\in[1\twodots m]$ (we assume~$\eps$ is sufficiently small). Therefore, the configurations~$\X$ we consider are uniformly bounded as vectors in~$\R^{m\ell}$. Assume the contrary: for any~$n \in \mathbb{N}$ there exists~$\X^n = (x_1^n,x_2^n\ldots, x_m^n)\in \R^{m\ell}$ that does not belong to~$U$,~$|x^n|=1$,~$\vec{x}^n \in W$, and
\eq{
\avsum\limits_{j=1}^m|x_j^n| \leq(1+1/n).
}
Let~$\X$ be a limit point of the sequence~$\{\X^n\}_n$. Then,~$\avsum_{j=1}^m|x_j| = 1$, which means the triangle inequality~$\avsum_{j=1}^m|x_j| \geq 1$ turns into equality and there exist~$v\in V$ with~$v_j \geq -1$,~$j\in [1\twodots m]$, such that~$\vec{x} = v\otimes x$. Since~$\vec{x} \in W$, we have~$v\in \M^W$. Therefore,~$\vec{x}$ belongs to~\eqref{SetU}, which contradicts the openness of~$U$.
\end{proof}

\section{Additional requirements on~$\varphi$ and~$W$}\label{S4}
\begin{Def}\label{GeometricSpace}
We call the space~$W\subset V\otimes \R^\ell$ geometric if the corresponding function~$\kappa$ is affine.
\end{Def}
The condition that a space~$W$ is geometric is similar to the condition that the~$k$-wave cone is empty (see also Lemma~\ref{FourierLemma} below); the latter condition played an important role in~\cite{APHF2019} and~\cite{Stolyarov2020bis}.
\begin{St}\label{GeometricStructure}
The space~$W$ is geometric if there exists~$\alpha$ such that
\begin{enumerate}[1\textup{)}]
\item  the inequality
\eq{\label{entropyIneq}
\avsum\limits_{j=1}^{m}(1+v_j)\log(1+v_j) \leq \alpha \log m
}
holds true for any vector~$v\in \M^W$\textup;
\item there exists~$v\in \M^W$ and~$H\subset [1\, ..\, m]$ such that~$|H| = m^{1-\alpha}$ and
\eq{
v_j = \begin{cases}
m^{\alpha }-1,\quad &j\in H;\\
-1,\quad &j\notin H,
\end{cases}
}
in particular\textup{,~\eqref{entropyIneq}} turns into equality on this vector.
\end{enumerate}
\end{St}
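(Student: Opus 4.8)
The plan is to show that hypotheses~1) and~2) force the function~$\kappa$ to coincide with the affine function $\ell(\theta):=\alpha(1-\theta)\log m$ on the whole interval $[0,1]$; by Definition~\ref{GeometricSpace} this is precisely the statement that~$W$ is geometric. I will use freely that $\kappa$ is convex and non-increasing with $\kappa(1)=0$, and that $\kappa=\sup_{v\in\M^W}\kappa_v$. The first step is to pin down the slope at the right endpoint. By~\eqref{FormulaForDerivativeOfKappa}, $\kappa'(1)=\inf\{-\avsum_{j}(1+v_j)\log(1+v_j)\mid v\in\M^W\}$. Hypothesis~1) says $\avsum_{j}(1+v_j)\log(1+v_j)\le\alpha\log m$ on all of~$\M^W$, so $\kappa'(1)\ge-\alpha\log m$; and the vector supplied by hypothesis~2), which I denote $v^{*}$, satisfies $\avsum_j(1+v_j^{*})\log(1+v_j^{*})=\tfrac1m|H|\,m^{\alpha}\,\alpha\log m=\alpha\log m$ (because $|H|=m^{1-\alpha}$ and $1+v_j^{*}$ is $m^{\alpha}$ on~$H$ and $0$ off~$H$), so $\kappa'(1)\le-\alpha\log m$. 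Hence $\kappa'(1)=-\alpha\log m$, and $\ell$ is the unique affine function with $\ell(1)=\kappa(1)=0$ and that slope. For the lower bound $\kappa\ge\ell$ I would plug the same~$v^{*}$ into~\eqref{KappaDefinition}: a direct computation gives $\|\mathbf 1+v^{*}\|_{L_{1/\theta}}=\bigl(\tfrac1m|H|(m^{\alpha})^{1/\theta}\bigr)^{\theta}=m^{\alpha(1-\theta)}$, whence $\kappa(\theta)\ge\kappa_{v^{*}}(\theta)=\ell(\theta)$; equivalently, $\ell$ is the supporting line of the convex function~$\kappa$ at $\theta=1$.

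The substantive half is the reverse inequality $\kappa\le\ell$. Since $\kappa$ is convex with $\kappa(1)=0$, it lies below the chord joining $(0,\kappa(0))$ and $(1,0)$, so $\kappa(\theta)\le(1-\theta)\kappa(0)$ for $\theta\in[0,1]$; and by~\eqref{Kappa0} one has $\kappa(0)=\sup_{v\in\M^W}\log\max_j(1+v_j)$. Therefore the whole matter reduces to the finite-dimensional bound
\[
\max_{j\in[1\,..\,m]}(1+v_j)\le m^{\alpha}\qquad\text{for every }v\in\M^W .
\]
I expect this to be the heart of the argument, and the place where hypothesis~1) has to be used in full strength---on all of~$\M^W$, not just at the vector~$v$ in question.

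To obtain the displayed bound I would exploit the linear structure of~$W$. If $v\in\M^W$, say $v\otimes a\in W$, then $cv\otimes a=v\otimes(ca)\in W$ for every scalar~$c$, so the whole segment $\{cv:\ cv_j\ge-1\ \forall j\}$ lies in~$\M^W$; in particular each of its two endpoints, which has at least one coordinate equal to~$-1$, is subject to hypothesis~1). More generally, in the spirit of Lemma~\ref{FlatRankOne}, a compactness/extreme-point reduction should let me assume that a vector of~$\M^W$ maximizing $\max_j(1+v_j)$ has several of the numbers $1+v_j$ equal to~$0$; for such degenerate configurations the inequality $\avsum_j(1+v_j)\log(1+v_j)\le\alpha\log m$---which by hypothesis~2) is already sharp on the $v^{*}$-shaped vectors---ought to be rigid enough to force $1+v_j\le m^{\alpha}$ pointwise. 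Carrying out this last rigidity argument is the main obstacle. Once it is in place, $\ell\le\kappa\le\ell$ gives $\kappa=\ell$, and hence~$W$ is geometric.
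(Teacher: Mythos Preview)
You have read the ``if'' in the statement literally and set out to prove that conditions~1) and~2) imply that $W$ is geometric. The paper's proof, however, establishes the \emph{converse}: it opens with ``Let $W$ be a geometric space and let $\alpha=-\kappa'(1)/\log m$'' and derives the two conditions from the affinity of $\kappa$. (Affinity gives $\kappa(0)=\alpha\log m$, hence $1+v_j\le m^\alpha$ for every $v\in\M^W$ via~\eqref{Kappa0}, whence the entropy bound~1); a vector realizing the infimum in~\eqref{FormulaForDerivativeOfKappa} then turns~\eqref{entropyIneq} into an equality, which forces each $1+v_j\in\{0,m^\alpha\}$, giving~2).) The subsequent Remark asserts, without argument, that the conditions are also sufficient. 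So you are attacking a different implication from the one the paper actually proves.

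Your computation of $\kappa'(1)$, the lower bound $\kappa\ge\ell$, and the reduction of the upper bound $\kappa\le\ell$ to the pointwise estimate $\max_j(1+v_j)\le m^{\alpha}$ for all $v\in\M^W$ are all correct. But the ``main obstacle'' you flag is not merely technical---it cannot be overcome by the scaling/extreme-point idea you sketch, because the entropy bound in~1) controls only an average and does not force the pointwise bound. Concretely, take $m=4$, $\ell=2$, $\alpha=\tfrac12$, and let $W=\mathrm{span}\{v^{*}\otimes e_1,\,u\otimes e_2\}$ with $v^{*}=(1,1,-1,-1)$ and $u=(2,-1,-\tfrac12,-\tfrac12)$. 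Then $\M^W$ consists precisely of the admissible scalar multiples of $v^{*}$ and of $u$; one checks that conditions~1) and~2) hold with $\alpha=\tfrac12$ (the entropy of $u$ equals $\tfrac14(3\log 3-\log 2)\approx 0.65<\tfrac12\log 4\approx 0.69$, and likewise for all scalings), yet $\kappa(0)=\log 3$ while $\kappa(\tfrac12)=\tfrac12\log\tfrac{19}{8}<\tfrac12\log 3$, so $\kappa$ is not affine. Thus the implication you attempted is in fact false as stated; the paper, despite its literal phrasing, proves only the necessity direction.
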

\begin{proof}
Let~$W$ be a geometric space and let~$\alpha = -\kappa'(1)/\log m$. Then, since~$\kappa$ is affine, we have~$\kappa(0) = \alpha \log m$. Thus, for arbitrary~$v\in \M^W$, we have~$(1+v_j) \leq m^{\alpha}$ for all~$j$, and, therefore,
\eq{\label{eq53}
\avsum\limits_{j=1}^{m}(1+v_j)\log(1+v_j)\leq \avsum\limits_{j=1}^{m}(1+v_j)\alpha \log m = \alpha \log m.
}
Now let~$v$ be the vector at which the infimum in~\eqref{FormulaForDerivativeOfKappa} is attained (by compactness, such a vector exists). The inequality~\eqref{eq53} turns into equality by the choice of~$v$. Thus, for any~$j \in [1\twodots m]$,~$1+v_j$ equals either zero or~$m^{\alpha}$. The set where it equals~$m^{\alpha}$ is the searched-for set~$H$; it is clear that~$|H| = m^{1-\alpha}$ since~$v$ has mean zero.
\end{proof}
\begin{Rem}
If~$W$ is a geometric space\textup, then~$m^{1-\alpha}$ is an integer.
\end{Rem}
\begin{Rem}
Proposition~\textup{\ref{GeometricStructure}} may be reversed\textup, the conditions described there are not only necessary\textup, but also sufficient for~$W$ to be a geometric space.
\end{Rem}

\begin{Def}
The special vectors~$v\otimes a$ from Proposition~\textup{\ref{GeometricStructure}} for which~\textup{\eqref{entropyIneq}} turns into equality are called \emph{extremal}. The corresponding sets~$H$ are their supports\footnote{This is a slight abuse of notation since~$H$ is the support of the function $j \mapsto 1+ v_j$,~$j \in [1\twodots m]$.}.
\end{Def}
\begin{Rem}
There might be several extremal vectors related to~$a \in \R^\ell \setminus \{0\}$.
\end{Rem}
\begin{Rem}\label{ExtremalityRemark}
The proof of Proposition~\textup{\ref{GeometricStructure}} also says\textup{:} if~$W$ is a geometric space and~$\kappa_v(\theta) = \kappa(\theta)$ for some~$v\in \M^W$\textup,~$\theta\in (0,1)$\textup, then~$v\otimes a$ is an extremal vector for some~$a\in \R^\ell \setminus \{0\}$. Indeed\textup, by Jensen's inequality\footnote{We use the weighted Jensen's inequality $\sum \alpha_j f(a_j) \leq f(\sum \alpha_j a_j)$ for a concave function~$f$ and substitute~$\alpha_j := (1+v_j)/m$,~$a_j := (1+v_j)^{1/\theta-1}$, and~$f:= \log$.},
\eq{
\avsum\limits_{j=1}^m (1+v_j)\log(1+v_j) \leq \frac{1}{1-\theta}\log\Big(\avsum\limits_{j=1}^m (1+v_j)^{1/\theta}\Big)^\theta.
}
We know this inequality turns into equality. Thus\textup, for each~$j$ either~$1+v_j = 0$ or~$1+v_j = m^\alpha$.
\end{Rem}

\begin{Def}\label{CancelingOp}
Let~$W\subset V\otimes \R^\ell$ be a geometric space. We say that an operator~$\varphi \colon W\to V$ is \emph{canceling} if the identity
\eq{
\forall j \in H\qquad (\varphi[w])_j = 0
}
holds true for any extremal vector~$w=v\otimes a$ and its support~$H\subset [1\,..\,m]$.
\end{Def}
The following proposition resembles Theorem~$1.5$ in~\cite{RSS2021}.
\begin{St}
Let~$W$ be a geometric space. If~\eqref{BilinearTrace} holds true at the endpoint~$\alpha = -\kappa'(1)/\log m$\textup, then~$\varphi$ is a canceling operator.
\end{St}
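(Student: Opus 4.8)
The plan is to argue by contraposition. Assume $W$ is geometric, put $\alpha = -\kappa'(1)/\log m$ (so that $m^{1-\alpha}$ is an integer $\geq 2$), and suppose $\varphi$ is \emph{not} canceling. Then there is an extremal vector $w = v\otimes a$ with support $H\subset[1\,..\,m]$, $|H| = m^{1-\alpha}$, satisfying $1+v_j = m^\alpha$ for $j\in H$ and $1+v_j = 0$ for $j\notin H$, together with an index $j_0\in H$ for which $(\varphi[w])_{j_0}\neq 0$. Write $g\colon H\to\R$, $g(j) = (\varphi[w])_j$; thus $g\not\equiv 0$. I will produce, for each large $N$, a simple martingale $F^{(N)}\in\W$ and a probability measure $\nu^{(N)}$ with Frostman quantity $\sup_{\omega}m^{(1-\alpha)n}\nu^{(N)}(\omega) = 1$, such that $\|F^{(N)}\|_{L_1}$ is bounded uniformly in $N$ while $\|\TT_\alpha[F^{(N)}]\|_{L_1(\nu^{(N)})}\to\infty$, contradicting~\eqref{BilinearTrace}.

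The martingale $F^{(N)}$ is the self-similar martingale generated by the extremal increment $w$. Call an atom \emph{surviving} if it is reached from $\T$ by always descending into a kid whose label lies in $H$; at each surviving atom $\omega\in\AF_n$ we split $F$ using the increment $\J_\omega[m^{\alpha n}w]$, and we stop (zero increments) after generation $N$ and at all non-surviving atoms. Since $w\in W$ and $W$ is a linear subspace, $dF^{(N)}_{n+1}|_\omega$ always lies in $\J_\omega[W]$, so $F^{(N)}\in\W$. By construction a surviving atom of generation $n$ carries the constant value $m^{\alpha n}a$, and there are $m^{(1-\alpha)n}$ of them, each of mass $m^{-n}$; hence $\|F^{(N)}_n\|_{L_1} = |a|$ for every $n$, so $\|F^{(N)}\|_{L_1} = |a|$ independently of $N$. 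Unwinding~\eqref{Raita's}, on the generation-$n$ term the factors $m^{-\alpha n}$ (from $\TT_\alpha$) and $m^{\alpha n}$ (from the scaled increment), combined with the linearity of $\varphi$, cancel: restricted to a surviving atom of generation $N$ corresponding to a descent sequence $(j^{(0)},\dots,j^{(N-1)})\in H^N$, one obtains $\TT_\alpha[F^{(N)}] = \sum_{n=0}^{N-1} g(j^{(n)})$ (the contributions from generations $\geq N$ and from non-surviving atoms are irrelevant below).

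For $\nu^{(N)}$ take the uniform probability measure on the $m^{(1-\alpha)N}$ surviving atoms of generation $N$, so each gets mass $m^{(\alpha-1)N}$; a short count gives $\nu^{(N)}(\omega) = m^{(\alpha-1)n}$ for a surviving $\omega\in\AF_n$ with $n\leq N$ and $\nu^{(N)}(\omega) = 0$ otherwise, whence $\sup_\omega m^{(1-\alpha)n}\nu^{(N)}(\omega) = 1$. Because $\TT_\alpha[F^{(N)}]$ is $\F_N$-measurable, $\int_\T|\TT_\alpha[F^{(N)}]|\,d\nu^{(N)} = \E\bigl|\sum_{n=0}^{N-1}X_n\bigr|$, where the $X_n$ are i.i.d.\ random variables taking the value $g(j)$ with probability $|H|^{-1}$ for each $j\in H$. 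Since $g\not\equiv 0$ and $|H|\geq 2$, this variable is bounded and not a.s.\ zero: if its mean $\mu$ is nonzero then $\E\bigl|\sum_n X_n\bigr|\geq N|\mu|$; if $\mu = 0$ then its variance is positive and the elementary fourth-moment (H\"older) bound $\E|S_N|\geq (\E S_N^2)^{3/2}(\E S_N^4)^{-1/2}$ gives $\E\bigl|\sum_n X_n\bigr|\gtrsim\sqrt N$. Either way the left-hand side of~\eqref{BilinearTrace} tends to $\infty$ while the right-hand side stays bounded by a multiple of $|a|$, the desired contradiction.

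I expect the only delicate part to be the third step of the second paragraph: carefully checking that the self-similar construction lies in $\W$, that its $L_1$ norm is generation-independent, and above all that the scaling factors in the definition of $\TT_\alpha$ telescope so that $\TT_\alpha[F^{(N)}]$ equals the plain walk sum $\sum_n g(j^{(n)})$ along each surviving branch. The Frostman normalization of $\nu^{(N)}$ and the divergence of $\E|S_N|$ are routine.
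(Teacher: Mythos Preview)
Your argument is correct and follows essentially the same route as the paper: the same self-similar martingale built from the extremal increment $v\otimes a$, the same uniform measure on the surviving generation-$N$ atoms, the same reduction to $\E\bigl|\sum_{n}X_n\bigr|$ for i.i.d.\ $X_n$. The one genuine difference is in the mean-zero case: the paper invokes the Central Limit Theorem to get $\E|S_N|\sim c\sqrt{N}$, whereas you use the elementary H\"older bound $\E|S_N|\geq (\E S_N^2)^{3/2}(\E S_N^4)^{-1/2}$, which together with $\E S_N^2 = N\sigma^2$ and $\E S_N^4\lesssim N^2$ gives $\E|S_N|\gtrsim\sqrt{N}$ directly. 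Your route is self-contained and avoids any appeal to limit theorems, at the cost of a small fourth-moment computation; the paper's route is shorter to state but imports a much heavier tool than is needed.
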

\begin{proof}
Let~$v\otimes a$ be an extremal vector. Consider the martingale~$F$ given by
\eq{\label{eq56}
F_n = \prod\limits_{j=0}^n(1+h_j)\otimes a,\qquad \text{where}\quad h_j=\sum\limits_{w\in \AF_j}\J_w[v].
}
In such a case,~$dF_{n+1} = F_n h_{n+1}\otimes a$ and~$F\in \W$. What is more,
\eq{
\TT_\alpha[F] = \sum\limits_{n \geq 0} m^{-\alpha n}\sum\limits_{w\in \AF_n}\J_w[\varphi[v\otimes a]]F_n(w).
}
Let~$H$ be the support of~$v$. By~\eqref{eq56}, we have~$F_n(w) = 0$ provided there is a digit in the $m$-ary expansion of the 'number' $w$ that does not belong to~$H$. Let us compute the values of~$F_n(w)$ at the other 'numbers'~$w\in \AF_n$:
\eq{
F_n(w) = m^{\alpha n} a.
}
Therefore, if we pick some large~$N$ and consider the stopped martingale~$F^N=\{F_{\min(n,N)}\}_n$, then
\eq{
\TT_\alpha [F^N](w)= \sum\limits_{j=0}^N\big(\varphi[v\otimes a]\big)_{w(j)}
}
at any point~$w$ inside the set
\eq{
\mathbb{H}_N = \Set{w\in \AF_n}{\forall j \leq N\quad w(j)\in H}.
}
The following two statements will contradict~\eqref{BilinearTrace}:
\begin{enumerate}[1)]
\item  
\eq{
\varliminf\limits_{N\to\infty}\avsum\limits_{w\in \mathbb{H}^N}\Big|\sum\limits_{j=0}^N\big(\varphi [v\otimes a]\big)_{w(j)}\Big| = \infty;
}
\item the uniform probability measure on~$\mathbb{H}^N$ fulfills the~$(\alpha,1)$-Frostman condition uniformly with respect to~$N$. 
\end{enumerate}
The second statement follows from the fact that the said measure is represented via the correspondence~\eqref{MeasureGeneratesAMartingale} by the martingale~$\{\prod_{j=0}^n(1+h_j)\}_n$, which is the scalar version of~$F$, and the bound
\eq{
\Big\|\prod\limits_{j=0}^n(1+h_j)\Big\|_{L_\infty} \leq m^{\alpha n}.
}

To prove the first statement, we consider the random variables~$\xi_j$:
\eq{
\xi_j(w) = (\varphi[v\otimes a])_{w(j)},\qquad w\in \mathbb{H}_N,
}
on the probability space~$\mathbb{H}_N$ equipped with the uniform probability measure. Note that these random variables are identically distributed and independent. We need to prove that~$\E |\sum_{j=1}^N\xi_j|\to \infty$ as~$N\to \infty$ (note that the mathematical expectation is computed w.r.t the uniform measure on~$\mathbb{H}_N$). In the case~$\E\xi_j\ne 0$, this follows from the triangle inequality. In the remaining case~$\E\xi_j = 0$, we use the Central Limit Theorem
\eq{
N^{-\frac12}\sum\limits_{j=0}^N\xi_j \rightarrow \mathcal{N}(0,\sigma) \qquad \text{in distribution},
}
here~$\sigma$ is the variance of~$\xi_j$. Thus,~$\E|\sum_{j=0}^N\xi_j| \sim c\sqrt{N}\to\infty$.
\end{proof}

We will need yet another condition imposed on~$W$. The necessity of this condition is doubtable. 
\begin{Def}\label{Non-locality}
Let~$\alpha\in (0,1)$ be fixed. We say that a geometric space~$W \subset V\otimes \R^\ell$ is non-local \textup(of order~$\alpha$\textup) if for any extremal vector~$v\otimes a\in \R^\ell$ supported on~$H$\textup, any~$w\in W$\textup, and any~$b\in \R^\ell$ the identity
\eq{
b+w_j = 0,\qquad \forall j\notin H,
}
implies that~$w$ is proportional to~$v\otimes a$. 
\end{Def}
In particular, the non-locality assumption implies that for any~$H \subset [1\twodots m]$ all~$a \in \R^\ell\setminus \{0\}$ such that there exists an extremal vector~$v\otimes a$ supported on~$H$, are proportional; this somehow resembles the strong cancellation condition in~\cite{GRvS2019} (the similarity is emphasized by the Fourier-side sufficient condition of non-locality in Lemma~\ref{NonLocalCoset} below).
%\begin{Rem}
%One may introduce this condition for spaces that are not geometric as well.
%\end{Rem}
We are ready to formulate our main result --- the endpoint trace theorem.
\begin{Th}\label{Main}
Let~$W$ be a geometric non-local space and let~$\varphi$ be a canceling operator. Then\textup,~\eqref{BilinearTrace} holds true at the endpoint~$\alpha = -\kappa'(1)/\log m$.
\end{Th}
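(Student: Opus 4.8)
By Corollary~\ref{SuperSolutionCor}, it suffices to exhibit a finite supersolution $G\colon\Omega\to\R$ (satisfying the main inequality~\eqref{MIG} at the endpoint exponent $\alpha = -\kappa'(1)/\log m$ and the boundary inequality $G(x,y,|x|,t,s)\geq |y|t$) that also obeys the upper bound $G(x,0,z,t,s)\lesssim zs$. The plan is to modify the subcritical supersolution~\eqref{SubcriticalSupersolution}: at the endpoint the term $M_1|x|t^{1-\theta}s^\theta$ can no longer have $\theta<1$ (the Hölder computation in~\eqref{Holder} breaks exactly when $\alpha=-\kappa'(1)/\log m$), so I would try a candidate of the shape
\eq{
G(x,y,z,t,s) = |y|t + M_1|x|\cdot s\cdot\Psi\!\left(\tfrac{t}{s}\right) + M_2(z-|x|)s,
}
where $\Psi\colon[0,1]\to\R_+$ is a suitable concave ``gauge'' function (with $\Psi$ bounded, e.g.\ behaving like $\log$-type corrections of the linear profile) chosen so that the discrepancy of the middle term is controlled. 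Just as in the proof of Theorem~\ref{SubcriticalTheorem}, the argument splits according to whether the flatness condition~\eqref{Flatness} holds. On non-flat configurations one again has $\NV[(z-|x|)s]\gtrsim|\X|s$ by~\eqref{ConvexAtom}, so a large $M_2$ absorbs any bounded negative contribution of the middle term; the new content is entirely in the \emph{flat} configurations.

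On flat configurations I would invoke Lemma~\ref{FlatRankOne}: up to a small error, $\X$ is a rank-one configuration $\vec{x}=v\otimes a$, $x=a$, with $v\in\M^W$. The key new ingredient compared to the subcritical case is that, since $W$ is geometric, Remark~\ref{ExtremalityRemark} forces these near-rank-one flat configurations to be close to \emph{extremal} ones, i.e.\ $v$ is (approximately) a $0$--$m^\alpha$ indicator supported on a set $H$ with $|H|=m^{1-\alpha}$. The plan is to compute $\NV[|x|\,s\,\Psi(t/s)]$ on an exactly extremal configuration and show it is non-negative (indeed, that the relevant ``$L_1$-type'' inequality holds with constant exactly $1$ at the endpoint), and then perturb. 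This is where the \textbf{cancellation hypothesis on $\varphi$} enters: on an extremal vector $v\otimes a$ with support $H$, we have $(\varphi[v\otimes a])_j=0$ for $j\in H$, so along the ``surviving'' kids $j\in H$ (the only ones where the mass of $\nu$ can concentrate, since $t_j\to 0$ on the kids $j\notin H$ after one step) the $y$-variable is not perturbed by $\varphi$; this is precisely what prevents the linear term $|y|t$ from producing an uncontrolled negative discrepancy and lets the geometric/extremal profile of $s=\max(t,m^{1-\alpha}\max_j s_j)$ interact correctly with $\Psi$. The \textbf{non-locality hypothesis} is what guarantees that a flat configuration genuinely localizes onto a \emph{single} extremal direction $a$ and support $H$, so that the perturbation analysis around the extremal configuration is uniform — without non-locality, different extremal vectors with the same support but incomparable $a$'s (or overlapping supports) could be mixed and the model extremal problem would not be rigid.

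After establishing $\NV[|x|\,s\,\Psi(t/s)]\gtrsim |\X|\,s\,\Psi(t/s)$ away from the extremal set (by the same compactness/limit-point argument as in Lemma~\ref{FlatRankOne}) and $\NV[|x|\,s\,\Psi(t/s)]\geq 0$ near it, I would combine these with the convexity estimate~\eqref{ConvexAtom} via the flat/convex argument exactly as in~\eqref{Discr}: for $M_1$ large the middle term wins on the near-extremal region, and for $M_2\gg M_1$ the $(z-|x|)s$ term wins everywhere else; the $|y|t$ discrepancy is handled by the triangle inequality as in the final lines of the proof of Theorem~\ref{SubcriticalTheorem}, now crucially using that $\varphi$ is canceling so that the perturbation $m^{-\alpha}(\varphi[\vec x])_j$ vanishes on the indices carrying the $\nu$-mass in the extremal limit. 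Finiteness of $G$ and the upper bound $G(x,0,z,t,s)\lesssim zs$ are immediate from boundedness of $\Psi$ and $t\leq s$, and Corollary~\ref{SuperSolutionCor} then yields~\eqref{BilinearTrace}. I expect the main obstacle to be the precise choice of the gauge $\Psi$ and the verification that the endpoint model inequality on extremal configurations holds with the sharp constant $1$ — equivalently, turning the ``heuristic rigidity'' of extremal vectors (Remark~\ref{ExtremalityRemark}) into a quantitative second-order estimate stable under the small perturbations produced by Lemma~\ref{FlatRankOne}; all other steps are structurally parallel to the subcritical proof already carried out.
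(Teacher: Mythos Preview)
Your overall framework --- supersolution via Corollary~\ref{SuperSolutionCor}, flat/convex splitting, reduction to near-extremal configurations via Lemma~\ref{FlatRankOne} --- matches the paper exactly. But the mechanism by which you expect the hypotheses to close the argument is wrong in two places, and following your plan as stated would not produce a proof.

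First, you assert that on an extremal configuration with support~$H$ the measure-masses satisfy ``$t_j\to 0$ on the kids $j\notin H$'', and you build the handling of $\NV[|y|t]$ on this. This is false: the~$t_j$ come from the measure~$\nu$, which is completely decoupled from the martingale~$F$ (and hence from~$H$); nothing prevents all of~$t$ from sitting on a single index~$j\notin H$. So cancellation of~$\varphi$ (which only controls the coordinates~$j\in H$) cannot by itself bound $\NV[|y|t]$. The paper deals with the off-$H$ indices by inserting the additional term~$C_1|x|t$: for~$j\notin H$ one has~$|x|-m^{-\alpha}|x_j|\gtrsim 1$ since~$x_j\approx 0$ and~$|x|\approx 1$, and this positive contribution to~$\NV[|x|t]$, weighted by~$t_j$, absorbs the $|y|t$-discrepancy on those indices.

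Second, you expect $\NV[|x|\,s\,\Psi(t/s)]\geq 0$ near extremal vectors; this is exactly the endpoint obstruction and does \emph{not} hold for any single homogeneous gauge. The paper takes~$\Psi(u)=\sqrt{u}$, i.e.\ the term~$|x|\sqrt{st}$, and proves in Lemma~\ref{LastTwoSummands} that \emph{together with}~$M(z-|x|)s$ its discrepancy is bounded below by~$D\sqrt{st}$, where~$D=\sum_j|x_j-(1+v_j)a|$ is the distance to the extremal configuration. Non-locality enters precisely here, and not as a qualitative uniqueness statement: it yields the quantitative bound $\sum_{j\in H}|x_j-m^\alpha a|\lesssim\sum_{j\notin H}|x_j|$ (deviation on~$H$ is controlled by deviation off~$H$), which is what converts the first-order vanishing at the extremal point into a usable second-order estimate. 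Cancellation of~$\varphi$ then bounds the $|y|t$-discrepancy on~$j\in H$ by~$D\cdot t_j\leq D\sqrt{st}$, which the previous step covers.

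Thus the paper's supersolution is $\Phi=|y|t+C_1|x|t+C_2|x|\sqrt{st}+C_3(z-|x|)s$, and the paper explicitly remarks that \emph{both} middle terms are required: $|x|\sqrt{st}$ handles~$j\in H$ via Lemma~\ref{LastTwoSummands}, and~$|x|t$ handles~$j\notin H$. Your single-gauge ansatz could in principle recover this with~$\Psi(u)=C_1 u+C_2\sqrt{u}$, but your plan would not arrive there, because it rests on the incorrect premise that the off-$H$ indices carry no~$t$-mass.
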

This theorem has a Bellman function behind it.
\begin{Th}\label{Bellster}
Let~$W$ be a geometric non-local space and let~$\varphi$ be a canceling operator. There exists a collection of constants~$1\ll C_1 \ll C_2\ll C_3$ such that the function
\eq{\label{BellsterFormula}
\Phi(x,y,z,t,s) = |y|t + C_1|x|t + C_2|x|\sqrt{st} + C_3(z-|x|)s
}
is a supersolution.
\end{Th}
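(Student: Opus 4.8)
The plan is to verify the main inequality $\NV[\Phi]\ge0$ on the configuration space $\WW$ by the flat/convex dichotomy already used in the proof of Theorem~\ref{SubcriticalTheorem}. Split $\Phi$ into its four terms and estimate the discrepancies separately. As in~\eqref{MainDiscr}, we have
\eq{\label{PlanConvex}
\NV[(z-|x|)s] = \Big(\avsum\limits_{j=1}^{m}|x_j| - |x|\Big)s + \sum\limits_{j=1}^m(z_j - |x_j|)(m^{-1}s - m^{-\alpha}s_j)\ge0,
}
and this term is $\gtrsim |\X|s \ge |\X|\sqrt{st}$ whenever the flatness condition~\eqref{Flatness} is violated. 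For the term $|x|t$: since $t=\sum t_j$ and each $s\ge m^{1-\alpha}s_j\ge m^{1-\alpha}t_j$, one checks by the triangle inequality (now without any gain) that $|\NV[|x|t]|\lesssim|\X|t\le|\X|\sqrt{st}$. Finally $|\NV[|y|t]|\lesssim|\X|t\lesssim|\X|\sqrt{st}$ exactly as before. So the entire burden falls on the cross term $C_2|x|\sqrt{st}$: I need to show that, in the \emph{flat} regime (when~\eqref{Flatness} holds), $\NV[|x|\sqrt{st}]$ is bounded below by a quantity that dominates the (possibly negative) contributions of the other three terms, while in the flat-but-non-extremal sub-regime it should even be $\gtrsim|\X|\sqrt{st}$ so that $C_3(z-|x|)s$ is not needed there.

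The core computation is therefore the lower bound for $\NV[|x|\sqrt{st}]$. Using $s=\max(t,m^{1-\alpha}\max_j s_j)$ and the splitting rule $t=\sum t_j$, apply Cauchy--Schwarz as in~\eqref{Holder} with the exponent pair $p=p'=2$:
\eq{\label{PlanCS}
m^{-\alpha}\sum_{j=1}^m |x_j|\sqrt{s_j t_j} = m^{1-\alpha}\avsum_{j=1}^m |x_j|\sqrt{s_j t_j}\le m^{1-\alpha}\Big(\avsum_{j=1}^m|x_j|^2\Big)^{1/2}\Big(\avsum_{j=1}^m s_j t_j\Big)^{1/2}.
}
Since $s_j\le m^{\alpha-1}s$ and $\sum_j t_j = t$, the second factor is at most $(m^{\alpha-1}s\cdot t/m)^{1/2}=m^{(\alpha-2)/2}\sqrt{st}$, so the right-hand side is $\le m^{1-\alpha+(\alpha-2)/2}(\avsum|x_j|^2)^{1/2}\sqrt{st}=m^{-\alpha/2}(\avsum|x_j|^2)^{1/2}\sqrt{st}$. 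Thus $\NV[|x|\sqrt{st}]\ge\big(|x|-m^{-\alpha/2}(\avsum|x_j|^2)^{1/2}\big)\sqrt{st}$, and the question reduces to comparing $|x|$ with $m^{-\alpha/2}(\avsum_j|x_j|^2)^{1/2}$ when $\vec{x}\in W$. For a rank-one extremal configuration $\vec{x}=v\otimes a$, $x=a$, $v$ supported on $H$ with $|H|=m^{1-\alpha}$ and $1+v_j=m^\alpha$ on $H$, we get $\avsum|x_j|^2 = |a|^2\avsum|1+v_j|^2 = |a|^2 m^{-1}\cdot m^{1-\alpha}m^{2\alpha}=|a|^2 m^\alpha$, so $m^{-\alpha/2}(\avsum|x_j|^2)^{1/2}=|a|=|x|$ \emph{exactly}: the cross term is balanced on extremals, which is precisely why $\sqrt{st}$ (the geometric mean, i.e.\ $\theta=1/2$) is the correct homogeneity at the endpoint $\alpha=-\kappa'(1)/\log m$. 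Away from rank-one extremal configurations a quantitative gap opens: by Lemma~\ref{FlatRankOne}, a flat $\X$ lies in a neighborhood of the set~\eqref{SetU}, and since $W$ is geometric the only $\ell^2$-extremizers among $\vec x\in W$ with prescribed $|x|$ are the rank-one extremals, so a compactness argument yields $m^{-\alpha/2}(\avsum|x_j|^2)^{1/2}\le(1-\delta)|x|$ off a small neighborhood, giving $\NV[|x|\sqrt{st}]\gtrsim|\X|\sqrt{st}$ there.

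It remains to treat the genuinely delicate region: flat configurations $\X$ that \emph{are} close to a rank-one extremal $v\otimes a$. Here $\NV[|x|\sqrt{st}]$ has no sign to spare, and I expect this to be the main obstacle. The idea is to use that $\varphi$ is canceling: on an extremal $w=v\otimes a$ one has $(\varphi[w])_j=0$ for $j\in H$, so the perturbation $m^\alpha y\mapsto m^\alpha y+(\varphi[\vec x])_j$ in the $|y|t$ term costs nothing \emph{along the support}, and off the support $t_j$ is forced small because $\nu$ satisfies the Frostman condition and $\vec x$ is nearly supported on $H$; meanwhile non-locality guarantees that the only directions $w\in W$ that can keep $b+w_j=0$ off $H$ are the extremal ones, so a genuinely flat-but-close-to-extremal configuration either essentially \emph{is} the extremal ray (and then $C_2|x|\sqrt{st}$ is exactly balanced while $\NV[|y|t]=0$ by cancellation, and the remaining terms are $\ge0$) or it has a controlled transversal component that reactivates the gain from~\eqref{PlanCS} or from the convexity term~\eqref{PlanConvex}. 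Making this quantitative — interpolating between the ``exactly extremal'' case and the ``$\delta$-separated'' case via a second-order Taylor expansion of $x\mapsto|x|\sqrt{st}$ near the extremal ray and absorbing the resulting error into $C_3(z-|x|)s$ (using $z-|x|\ge0$ and that transversality forces $z-|x|$ or $\avsum|x_j|-|x|$ to be of the right order) — is the technical heart of the argument, and is where the precise constants $1\ll C_1\ll C_2\ll C_3$ are consumed.
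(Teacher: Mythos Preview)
Your overall architecture (flat/convex dichotomy, Cauchy--Schwarz with $p=2$ for the $\sqrt{st}$ term, extremal rank-one configurations as the critical case) matches the paper's, and your computation that the $\sqrt{st}$ term is \emph{exactly} balanced on extremals is correct and important. However, there is a genuine gap in the near-extremal analysis.

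You treat $C_1|x|t$ as an error term (``$|\NV[|x|t]|\lesssim|\X|t$'') to be absorbed by $C_2|x|\sqrt{st}$. This misses its role entirely. Near an extremal $v\otimes a$ with support $H$, the discrepancy of $|y|t$ splits as
\[
\NV[|y|t]\ \ge\ -\sum_{j\notin H}|\X|\,t_j\ -\ \sum_{j\in H}D\,t_j,\qquad D=\sum_j|x_j-(1+v_j)a|,
\]
where the second sum uses that $\varphi$ is canceling. The second sum is indeed $\lesssim D\sqrt{st}$ and can be absorbed by $\NV[|x|\sqrt{st}+M(z-|x|)s]$ (this is the paper's Lemma~\ref{LastTwoSummands}, whose proof uses non-locality to obtain $D\lesssim\sum_{j\notin H}|x_j|$; your ``second-order Taylor expansion'' is too vague here --- the actual argument is a first-order expansion that reduces to $\frac{1}{m}\langle\sum_{j\notin H}x_j,a\rangle$, combined with the convexity term and a planar geometric estimate). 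But the first sum $\sum_{j\notin H}|\X|\,t_j$ \emph{cannot} be absorbed this way: $|\X|$ is of order $1$, not $D$, and nothing bounds $\sum_{j\notin H}t_j$. Your claim that ``off the support $t_j$ is forced small because $\nu$ satisfies the Frostman condition and $\vec x$ is nearly supported on $H$'' is simply false --- there is no coupling whatsoever between where $\vec x$ concentrates and where $\nu$ concentrates; the inequality must hold for arbitrary $\nu$.

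This is precisely why $C_1|x|t$ is needed: for $j\notin H$ one has $x_j\approx 0$ and $|x|\approx 1$, so
\[
\NV[|x|t]\ \ge\ \sum_{j\notin H}t_j\big(|x|-m^{-\alpha}|x_j|\big)\ \gtrsim\ \sum_{j\notin H}t_j,
\]
which is exactly the positive term that compensates $\sum_{j\notin H}|\X|t_j$. For $j\in H$ the contribution $t_j(|x|-m^{-\alpha}|x_j|)$ is $O(D)$ and is handled by the $\sqrt{st}$ term. This is the ``mysterious'' necessity of both $|x|t$ and $|x|\sqrt{st}$ noted after the theorem; your proposal as written would fail without recognizing it. (A minor point: you have the flat/convex labels reversed throughout --- ``flat'' means \eqref{Flatness} is \emph{violated}.)
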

\begin{Rem}
Mysteriosly\textup, both terms~$|x|t$ and~$|x|\sqrt{st}$ are required.
\end{Rem}

\section{Proof of Theorem~\ref{Bellster}}\label{S5}
In fact, the proof consists of an accurate computation and estimate of~$\NV[\Phi]$, which is done in several steps. 

\paragraph{Estimates for~$|x|\sqrt{st} + M(z-|x|)s$.}
One may verify that this function is a supersolution, provided~$M$ is sufficiently large, using the reasonings presented in the proof of Theorem~\ref{SubcriticalTheorem} (see~\eqref{Discr}). Now we need to prove that this function is a supersolution with certain excess. We derive from~\eqref{MainDiscr} and~\eqref{Holder} that
\eq{
\NV\Big[|x|\sqrt{st} + M(z-|x|)s\Big] \geq \sqrt{st}\Big(|x| - m^{-\alpha/2}\Big(\avsum\limits_{j=1}^{m} |x_j|^2\Big)^\frac12 + M\Big(\avsum\limits_{j=1}^{m}|x_j| - |x|\Big)\Big)
}
(recall~$s\geq t$). By homogeneity with respect to the variables~$x$ and~$z$, we may assume~$|x|=1$. We may also assume~$\avsum_{j=1}^{m}|x_j|\leq (1+\eps)|x|$ for sufficiently small~$\eps$, the other case will be covered by the flat/convex argument and the choice of large~$M$ (or large~$C_3$ for the initial function~$\Phi$). Thus, by Lemma~\ref{FlatRankOne}, we need to study the behavior of the function
\eq{
\Big(|x| - m^{-\alpha/2}\Big(\avsum\limits_{j=1}^{m} |x_j|^2\Big)^\frac12\Big) + M\Big(\avsum\limits_{j=1}^{m}|x_j| - |x|\Big)
}
when~$\vec{x}$ lies in a neighborhood of some~$v\otimes a \in W$ and~$x$ lies in a neighborhood of~$a$. Using the homogeneity with respect to the variables~$x$ and~$z$, we may assume~$|a|=1$ and~$\scalprod{x}{a} = 1$, which does not change the things much since all our functions are homogeneous of order one with respect to~$x$ and~$z$. Of course,~$|x|$ is not necessarily equal to~$1$ anymore. Note that if~$v\otimes a$ is not an extremal vector, then
\eq{\label{L2NonExtremal}
\Big(|x| - m^{-\alpha/2}\Big(\avsum\limits_{j=1}^{m} |x_j|^2\Big)^\frac12\Big) > 0
}
provided~$\vec{x}$ is sufficiently close to~$v\otimes a$ (since this function is positive at~$x=v\otimes a$, see Remark~\ref{ExtremalityRemark}). The case where~$v\otimes a$ is an extremal vector is covered by the following lemma.
\begin{Le}\label{LastTwoSummands}
Assume~$v\otimes a\in W$ is an extremal vector of the geometric non-local space~$W$. If~$\scalprod{x}{a}=1$\textup,~$\vec{x}\in W$ is sufficiently close to~$v\otimes a$\textup, and~$x$ is sufficiently close to~$a$\textup, then
\eq{
\NV\Big[|x|\sqrt{st} + M(z-|x|)s\Big] \gtrsim \Big(\sum\limits_{j=1}^m|x_j - (1+v_j)a|\Big)\sqrt{st},
}
provided~$M$ is sufficiently large\textup; here~$v=(v_1,v_2,\ldots,v_m)$.
\end{Le}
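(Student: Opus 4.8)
The plan is to reduce the lemma to a first–order analysis of $\NV$ near the extremal configuration and then to let the non-locality hypothesis finish the job through an elementary compactness argument. As already recorded just before the lemma — combine~\eqref{MainDiscr} with~\eqref{Holder} for $\theta=\tfrac12$, discard the non-negative summand $\sum_j(z_j-|x_j|)(m^{-1}s-m^{-\alpha}s_j)$, and use $s\ge t$ — one has $\NV[|x|\sqrt{st}+M(z-|x|)s]\ge\sqrt{st}\,Q(\X)$, where
\[
Q(\X):=|x|-m^{-\alpha/2}\Big(\avsum_{j=1}^{m}|x_j|^2\Big)^{1/2}+M\Big(\avsum_{j=1}^{m}|x_j|-|x|\Big).
\]
So it suffices to prove $Q(\X)\gtrsim\sum_{j=1}^{m}|x_j-(1+v_j)a|$ for $\X$ in a small enough neighbourhood of $\X^{\ast}:=\big((1+v_j)a\big)_{j=1}^{m}$ inside the constraint set $\{\vec x\in W,\ \scalprod{x}{a}=1\}$. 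Note that $x^{\ast}=\avsum_j(1+v_j)a=a$ and that the associated $\vec x$ of $\X^{\ast}$ equals $v\otimes a\in W$, so, under the normalization $|a|=1$ in force (see the discussion before the lemma), $\X^{\ast}$ itself lies in the constraint set; recall also that $1+v_j=m^{\alpha}$ for $j\in H$, $1+v_j=0$ for $j\notin H$, and $|H|=m^{1-\alpha}$.

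Next I would pass to the perturbation variables $\Xi=\{\xi_j\}_{j=1}^{m}$, $\xi_j:=x_j-(1+v_j)a$, so that $\bar\xi:=\avsum_j\xi_j=x-a$ and the associated $\vec x$ of $\X^\ast+\Xi$ equals $v\otimes a+\vec\Xi$; since $v\otimes a\in W$, the constraint set is exactly $\X^\ast+\mathcal L$ with $\mathcal L:=\{\Xi:\vec\Xi\in W,\ \scalprod{\bar\xi}{a}=0\}$ a linear subspace. A routine Taylor expansion of $Q$ around $\Xi=0$, using $1+v_j=m^{\alpha}$ on $H$, $1+v_j=0$ off $H$, $\avsum_{j\in H}m^{2\alpha}=m^{\alpha}$, and $|a|=1$, shows that the linear contributions of the indices $j\in H$ cancel against one another (they appear with opposite signs in the two brackets of $Q$) and that
\[
Q(\X^{\ast}+\Xi)=\Psi(\Xi)+O(|\Xi|^2),\qquad \Psi(\Xi):=\avsum_{j\notin H}\big(M|\xi_j|-(M-1)\scalprod{a}{\xi_j}\big),
\]
with the error $O(|\Xi|^2)$ uniform for fixed $M$ — the only non-smooth term of $Q$ near $\X^\ast$, namely $\avsum_{j\notin H}|x_j|=\avsum_{j\notin H}|\xi_j|$, is already part of the degree-one-homogeneous $\Psi$. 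Since $\scalprod{a}{\xi_j}\le|\xi_j|$ by Cauchy--Schwarz, for $M\ge1$ this yields $\Psi(\Xi)\ge\avsum_{j\notin H}|\xi_j|\ge0$.

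The crucial point is then the purely linear-algebraic claim that the seminorm $N(\Xi):=\avsum_{j\notin H}|\xi_j|$ is bounded below by $c\,|\Xi|$ on $\mathcal L$ for some $c>0$. By homogeneity and compactness of $\mathcal L\cap\{|\Xi|=1\}$ it is enough to rule out $N(\eta)=0$ for $\eta\in\mathcal L$, $|\eta|=1$. If $\eta_j=0$ for all $j\notin H$ and $\eta\in\mathcal L$, then $\vec\eta\in W$ and $b+(\vec\eta)_j=0$ for $j\notin H$ with $b:=\bar\eta$, so the non-locality of $W$ (Definition~\ref{Non-locality}), applied to the extremal vector $v\otimes a$ supported on $H$, forces $\vec\eta=\lambda\,v\otimes a$ for a scalar $\lambda$, i.e.\ $\eta_j-\bar\eta=\lambda v_ja$; taking any $j\notin H$ (where $v_j=-1$, $\eta_j=0$) gives $\bar\eta=\lambda a$, hence $\eta_j=\lambda(1+v_j)a$ for all $j$, that is $\eta=\lambda\X^{\ast}$; finally $\scalprod{\bar\eta}{a}=0$ reads $\lambda|a|^2=0$, so $\lambda=0$ and $\eta=0$, a contradiction. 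Granting the claim, $Q(\X^{\ast}+\Xi)\ge\Psi(\Xi)-C_M|\Xi|^2\ge N(\Xi)-C_M|\Xi|^2\ge c|\Xi|-C_M|\Xi|^2\ge\tfrac c2|\Xi|$ for $|\Xi|$ small, and since $|\Xi|$ and $\sum_j|\xi_j|$ are equivalent on $\R^{m\ell}$, multiplying by $\sqrt{st}$ gives the lemma.

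The step I expect to be the main obstacle is the bookkeeping in the Taylor expansion — correctly isolating $\Psi$ and checking that the $j\in H$ linear terms cancel — together with the conceptual observation that the non-locality hypothesis is exactly what removes the only possible null direction of $N$ on $\{\vec\Xi\in W\}$, namely $\X^{\ast}$ itself, the last remaining one-dimensional ambiguity being absorbed by the affine normalization $\scalprod{x}{a}=1$.
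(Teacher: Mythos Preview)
Your argument is correct and shares the paper's overall architecture --- reduce to the pointwise inequality $Q(\X)\gtrsim D$ with $D=\sum_j|x_j-(1+v_j)a|$, then invoke non-locality via the same compactness claim (your $N(\Xi)\gtrsim|\Xi|$ on $\mathcal L$ is exactly the paper's Claim, eq.~\eqref{NonLocalCons}--\eqref{615}). Where you depart is in the first-order analysis of $Q$ near the extremal configuration. The paper lower-bounds the convexity term $\avsum_j|x_j|-|x|$ by projection quantities $|x_j|-|\pi_x[x_j]|$ and $\chi_{\R_-}[\pi_x[x_j]]$ (an estimate inherited from \cite{ASW2018}), which throws away information and then forces a four-region geometric argument in the plane spanned by $x$ and $a$ to recover the desired bound; this is also why the paper genuinely needs $M$ large. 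You instead keep the exact Taylor expansion of $Q$, isolate the homogeneous-of-degree-one part $\Psi(\Xi)=\tfrac1m\sum_{j\notin H}\bigl(M|\xi_j|-(M-1)\scalprod{a}{\xi_j}\bigr)$, and observe that $\Psi\ge N$ is the one-line Cauchy--Schwarz inequality $\scalprod{a}{\xi_j}\le|\xi_j|$. This is cleaner, avoids the picture entirely, and in fact shows that any $M\ge 1$ works (with the admissible neighbourhood shrinking as $M$ grows, since the $O(|\Xi|^2)$ remainder carries an $M$-dependent constant). Both routes hinge on the same use of non-locality; yours simply treats the lower-order terms more efficiently.
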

Note that
\eq{
\sum\limits_{j=1}^m|x_j - (1+v_j)a|=\sum\limits_{j\in H}^m|x_j- m^\alpha a| + \sum\limits_{j\notin H}^m|x_j|,
}
where~$H$ is the support of the extremal vector~$v\otimes a$.
\begin{proof}
By the reasonings above, it suffices to prove that
\eq{\label{WithoutST}
\Big(|x| - m^{-\alpha/2}\Big(\avsum\limits_{j=1}^{m} |x_j|^2\Big)^\frac12\Big) + M\Big(\avsum\limits_{j=1}^{m}|x_j| - |x|\Big) \gtrsim \Big(\sum\limits_{j=1}^m|x_j - (1+v_j)a|\Big),
}
given the assumptions of the lemma. Let~$\pi_x\colon \R^\ell \to \R^\ell$ be the orthogonal projection onto the line spanned by~$x$. The notation~$\chi_{\R_-}[\pi_x[y]]$ means the negative part of the projection: it equals~$|\pi_x[y]|$ if~$\scalprod{x}{y} \leq 0$ and zero otherwise. Let~$H$ be the support of~$v\otimes a$. We expand the second term on the left hand side of~\eqref{WithoutST} (see the proof of Lemma~$2.1$ in~\cite{ASW2018} for similar estimates):
\mlt{
\avsum\limits_{j=1}^{m}|x_j| - |x| = \frac{1}{m}\bigg(\sum\limits_{j=1}^m(|x_j| - |\pi_{x}[x_j]|)+\sum\limits_{j=1}^m(|\pi_{x}[x_j]|-|x|)\bigg)\\ \gtrsim  \sum\limits_{j\notin H} (|x_j| - |\pi_x[x_j]|) + \sum\limits_{j\notin H}\chi_{\R_-} [\pi_x[x_j]]. 
}
Let
\eq{\label{Dformula}
D = \Big(\sum\limits_{j=1}^m|x_j - (1+v_j)a|\Big),%\quad v=(v_1,v_2,\ldots,v_m),
}
which is equivalent to the distance between~$\X$ and~$({\bf 1}_{[1..m]} + v)\otimes a$. Then,
\alg{
|x_j|^2 = |m^{\alpha }a|^2 + 2\scalprod{x_j - m^{\alpha} a}{m^{\alpha} a} + O(D^2),\quad &j\in H;\\
|x_j|^2 = O(D^2),\quad &j\notin H;\\
|x|^2 = |a|^2 + 2\scalprod{x-a}{a}+O(D^2) = 1+O(D^2),\quad &
}
by our normalization~$|a|^2 = \scalprod{x}{a}=1$.
Therefore,
\mlt{\label{eq612}
|x| - m^{-\alpha/2}\Big(\avsum\limits_{j=1}^{m} |x_j|^2\Big)^\frac12\\ = 1+O(D^2) - m^{-\frac{\alpha}{2}}\Big(m^{-1}\cdot m^{1-\alpha}\cdot m^{2\alpha} + \frac{2}{m}\sum\limits_{j\in H} \scalprod{x_j - m^{\alpha} a}{m^{\alpha}a} + O(D^2)\Big)^{\frac12} \\= 
1+O(D^2) -\Big(1 + \frac{2}{m^{1+\alpha}}\sum\limits_{j\in H} \scalprod{x_j - m^{\alpha} a}{m^{\alpha}a} + O(D^2)\Big)^{\frac12}\\ = - m^{-1-\alpha}\sum\limits_{j\in H}\scalprod{x_j-m^{\alpha}a}{m^{\alpha} a} + O(D^2).
}
We use the identity~$\scalprod{\sum_{j\in H}x_j}{a} = m-\scalprod{\sum_{j\notin H}x_j}{a}$ (which follows from~$\scalprod{x}{a} = |a| = 1$) to rewrite the expression obtained in~\eqref{eq612} as
\eq{
- m^{-1-\alpha}\sum\limits_{j\in H}\scalprod{x_j-m^{\alpha}a}{m^{\alpha} a} + O(D^2) = \frac{1}{m}\scalprod{\sum\limits_{j\notin H} x_j}{a}+O(D^2).
}
Thus, 
\mlt{\label{eq614}
\Big(|x| - m^{-\alpha/2}\Big(\avsum\limits_{j=1}^{m} |x_j|^2\Big)^\frac12\Big) + M\Big(\avsum\limits_{j=1}^{m}|x_j| - |x|\Big)\\  \gtrsim \frac{1}{m}\scalprod{\sum\limits_{j\notin H} x_j}{a} +  M\bigg(\sum\limits_{j\notin H} (|x_j| - |\pi_x[x_j]|) + \sum\limits_{j\notin H}\chi_{\R_-} [\pi_x[x_j]]\bigg) + O(D^2).
}
{\bf Claim:} the assumption that~$W$ is a non-local space (together with the normalization~$\scalprod{a}{x} = |a|^2$) leads to the estimate
\eq{\label{NonLocalCons}
\sum\limits_{j\in H}|x_j - (1+v_j)a| = \sum\limits_{j\in H}|x_j - m^{\alpha}a| \lesssim \sum\limits_{j\notin H} |x_j|.
}
To prove the claim, consider the linear subspace~$\mathfrak{L}$ of~$\R^m\otimes \R^\ell$ formed by the strings~$(\xi_1,\xi_2,\ldots,\xi_m)$, where~$\xi_j\in \R^\ell$ such that~$(\xi_1-\xi,\xi_2-\xi,\ldots,\xi_m-\xi)\in W$ and~$\scalprod{\xi}{a}=0$; here, as usual,~$\xi = \avsum_{j=1}^m\xi_j$. Note that the string formed of vectors~$x_j - m^\alpha a$ when~$j\in H$ and~$x_j$ when~$j\notin H$, belongs to this space. Therefore, it suffices to prove the inequality
\eq{\label{615}
\|\Xi\| \lesssim\sum\limits_{j\notin H}|\xi_j|,\qquad \Xi = (\xi_1,\xi_2,\ldots,\xi_m)\in \mathfrak{L}. 
}
Note that the function on the right hand side of this inequality is one-homogeneous and even. It suffices to prove this function is positive outside the origin. In such a case, it is a norm on~$\mathfrak{L}$, and the desired inequality~\eqref{615} follows since any two norms on a finite dimensional space are equivalent. To prove 
\eq{
\sum\limits_{j\notin H}|\xi_j| > 0,\qquad \Xi \in \mathfrak{L}\setminus \{0\},
}
assume the contrary: let~$\xi_j=0$ when~$j\notin H$ for some non-zero~$\Xi \in \mathfrak{L}$. Since~$W$ is a non-local space (see Definition~\ref{Non-locality}), this means~$(\xi_1-\xi,\xi_2-\xi,\ldots,\xi_m-\xi)$ is proportional to~$v\otimes a$. In particular, since some of the~$\xi_j$ equals zero,~$\xi$ is parallel to~$a$, which, by our assumption~$\scalprod{\xi}{a} = 0$, means~$\xi = 0$. Therefore,~$\Xi = 0$, a contradiction. Thus, we have proved the {\bf claim}.
\begin{figure}
	\vspace{-50pt}
    \centerline{\includegraphics[width=0.6\textwidth]{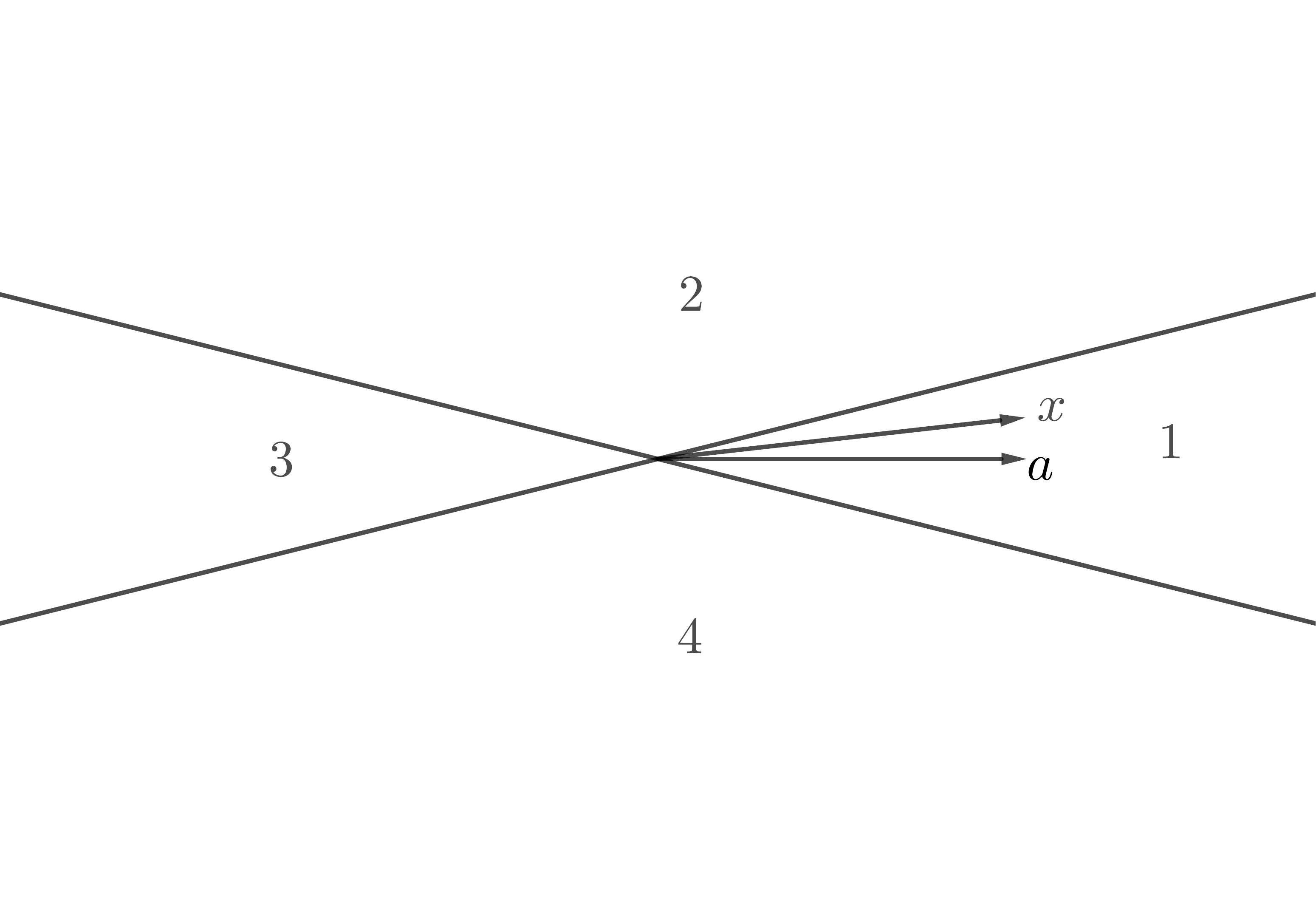}}
    \caption{The plane spanned by~$a$ and~$x$.}
    \label{XA}
\end{figure}

In particular,~$D\lesssim \sum_{j\notin H}|x_j|$. Therefore, by~\eqref{eq614}, the estimate
\eq{\label{Wanted}
\sum\limits_{j\notin H} |x_j| \lesssim \frac{1}{m}\scalprod{\sum\limits_{j\notin H} x_j}{a} +  M\bigg(\sum\limits_{j\notin H} (|x_j| - |\pi_x[x_j]|) + \sum\limits_{j\notin H}\chi_{\R_-} [\pi_x[x_j]]\bigg)
}
yields~\eqref{WithoutST}. Assume~$M > 1$, then, without loss of generality, we may replace the~$x_j$ with their projections onto the linear plane spanned by~$x$ and~$a$ (such a procedure will reduce the values~$|x_j|$ in the formula above, while all the other quantities will remain the same). We will argue geometrically using Fig.~\ref{XA}. We split the plane spanned by~$x$ and~$y$ by the lines~$\scalprod{y}{a} = \pm\frac{1}{3}|y|$ into four regions as it is drawn on Fig.~\ref{XA}.

If~$x_j$ belongs to the second or the fourth domains, then~$|x_j| \lesssim |x_j| - |\pi_x[x_j]|$ (recall that~$x$ is close to~$a$ and, thus, lies strictly inside the first domain). If~$x_j$ lies inside the first domain, then~$|x_j| \lesssim \scalprod{x_j}{a}$. Finally, in the third domain,
\eq{
|x_j| \lesssim \chi_{\R_-} [\pi_x[x_j]],
}
and thus, the choice of large~$M$ implies
\eq{
|x_j| +|\scalprod{x_j}{a}| \leq M \chi_{\R_-} [\pi_x[x_j]] 
} 
in this case, which proves~\eqref{Wanted} (in fact, we have proved the individual estimate
\eq{
|x_j| \lesssim \scalprod{ x_j}{a} +  M\bigg((|x_j| - |\pi_x[x_j]|) + \chi_{\R_-} [\pi_x[x_j]]\bigg)
}
provided~$M$ is sufficiently large).
\end{proof}

\paragraph{Estimates for~$|x|t+L(|x|\sqrt{st} + M(z-|x|)s)$.}
\begin{St}
For sufficiently large~$L$ and~$M$ the function
\eq{
|x|t+L(|x|\sqrt{st} + M(z-|x|)s)
}
is a supersolution.
\end{St}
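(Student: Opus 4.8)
The plan is to proceed exactly as in the treatment of the two-term function in the previous paragraph, adding the new summand $|x|t$ on top of the already-established supersolution $|x|\sqrt{st} + M(z-|x|)s$, and using the ``flat/convex argument'' together with the excess gained in Lemma~\ref{LastTwoSummands}. First I would compute $\NV[|x|t]$ directly. Writing $t=\sum_j t_j$ and recalling that $t$ itself is the sum (not the average) of the $t_j$, one gets
\eq{
\NV[|x|t] = |x|t - m^{-\alpha}\sum_{j=1}^m|x_j|t_j = \Big(\avsum_{j=1}^m|x_j| - |x|\Big)\Big(\sum_{j=1}^m t_j\Big) + \sum_{j=1}^m|x_j|\big(m^{-1}t - m^{-\alpha}t_j\big),
}
so the first term is non-negative by the splitting rules~\eqref{SplittingRules}, exactly as for $(z-|x|)s$ in~\eqref{MainDiscr}, while the second term is $O(|\X|t)$ since $0\le t_j\le t$. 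Hence the crude bound $|\NV[|x|t]|\lesssim |\X|\, t \le |\X|\sqrt{st}$ holds (using $t\le s$), which is the analogue of~\eqref{SimpleConvex}.

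Next comes the flat/convex dichotomy. If~\eqref{Flatness} holds (the ``convex'' atom), then by~\eqref{ConvexAtom} the discrepancy of $(z-|x|)s$ dominates $|\X|s \gtrsim |\X|\sqrt{st}$, so choosing $M$ large enough (hence $C_3$ large enough in $\Phi$) absorbs $\NV[|x|t]$ and $\NV[|x|\sqrt{st}]$ with room to spare. If~\eqref{Flatness} is violated (the ``flat'' atom), I split again according to whether the relevant rank-one limit $v\otimes a$ supplied by Lemma~\ref{FlatRankOne} is extremal. In the non-extremal case, the strict positivity~\eqref{L2NonExtremal} gives a definite gap $|x| - m^{-\alpha/2}(\avsum_j|x_j|^2)^{1/2} \gtrsim 1$ on a neighbourhood, so $\NV[|x|\sqrt{st}]\gtrsim |\X|\sqrt{st}$ there and this again swallows $\NV[|x|t]$ once $L$ is large. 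In the extremal case, Lemma~\ref{LastTwoSummands} gives (for $M$ large) the excess
\eq{
\NV\big[|x|\sqrt{st} + M(z-|x|)s\big] \gtrsim \Big(\sum_{j=1}^m|x_j - (1+v_j)a|\Big)\sqrt{st} = D\sqrt{st},
}
with $D$ as in~\eqref{Dformula}, which is comparable to $|\X - ({\bf 1}+v)\otimes a|$ and hence (since $\X$ is close to the fixed configuration $({\bf 1}+v)\otimes a$) controls $|\X|$ up to the bounded quantity $|({\bf 1}+v)\otimes a|$; so multiplying by the large constant $L$ and comparing with $|\NV[|x|t]|\lesssim |\X|\sqrt{st}$ finishes the flat-extremal case as well. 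Collecting the three cases, $|x|t + L(|x|\sqrt{st} + M(z-|x|)s)$ obeys the main inequality; the boundary inequality is immediate since each of the three extra summands is non-negative on $\{z=|x|\}$ and vanishes there except $|x|t$, and in any case $\Phi(x,y,|x|,t,s)\ge|y|t$ trivially.

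The main obstacle is the flat-extremal case, specifically matching up the \emph{scales} of the estimates: Lemma~\ref{LastTwoSummands} produces an excess of order $D\sqrt{st}$, i.e. proportional to the distance to the degenerate rank-one configuration, whereas $\NV[|x|t]$ is only controlled by $|\X|\sqrt{st}$, which does \emph{not} go to zero as $\X\to({\bf 1}+v)\otimes a$. One must therefore verify that near this configuration $|\X|$ is genuinely bounded (it is, by the normalization $\scalprod{x}{a}=1$ and the local analysis), so that a uniform large $L$ dominates the fixed-size contribution; this is where the homogenizations used in Lemma~\ref{LastTwoSummands} (replacing $|x|=1$ by $\scalprod{x}{a}=1$, $|a|=1$) must be threaded through carefully. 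A secondary, purely bookkeeping point is tracking the order of quantifiers on the constants — $M$ is fixed first (large enough for Lemma~\ref{LastTwoSummands} and for the convex-atom absorption of $\NV[|x|\sqrt{st}]$), then $L$ is chosen large depending on $M$ — so that when these are later renamed $C_1,C_2,C_3$ in $\Phi$ the chain $1\ll C_1\ll C_2\ll C_3$ is respected.
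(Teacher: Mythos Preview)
There is a genuine gap in the flat-extremal case, and it is precisely the one you flag as the ``main obstacle'' but then do not actually resolve. Your crude bound $|\NV[|x|t]|\lesssim |\X|\sqrt{st}$ is correct, and the excess $L\cdot\NV[|x|\sqrt{st}+M(z-|x|)s]\gtrsim L\,D\sqrt{st}$ from Lemma~\ref{LastTwoSummands} is also correct. The problem is that $|\X|$ does \emph{not} tend to zero as $\X\to({\bf 1}+v)\otimes a$ (it tends to a positive constant of order $m^{\alpha}$), whereas $D$ does. So the inequality you need, $L\,D\gtrsim |\X|$, fails for $\X$ close enough to the extremal configuration, \emph{no matter how large $L$ is}. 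Saying ``$|\X|$ is genuinely bounded, so a uniform large $L$ dominates'' is exactly backwards: boundedness of $|\X|$ from above does not help when the compensating term $L\,D$ can be made arbitrarily small.

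The fix, which the paper carries out, is to abandon the crude bound on $\NV[|x|t]$ and use its exact form $\NV[|x|t]=\sum_{j=1}^m t_j\big(|x|-m^{-\alpha}|x_j|\big)$ together with the structure of the extremal vector. Split the sum according to whether $j$ lies in the support $H$ of $v$. For $j\notin H$ one has $x_j$ close to $0$ and $x$ close to $a$, so $|x|-m^{-\alpha}|x_j|\ge 0$ and these terms are harmless. For $j\in H$ the scalar function $\X\mapsto |x|-m^{-\alpha}|x_j|$ is Lipschitz and \emph{vanishes} at $({\bf 1}+v)\otimes a$ (because there $|x_j|=m^{\alpha}|a|=m^{\alpha}|x|$); hence $\big||x|-m^{-\alpha}|x_j|\big|\lesssim D$. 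Summing over $j\in H$ gives $\big|\text{negative part of }\NV[|x|t]\big|\lesssim D\,t\le D\sqrt{st}$, and \emph{this} is what gets absorbed by $L\,D\sqrt{st}$ for large $L$. (Incidentally, your displayed identity for $\NV[|x|t]$ has a sign error in the first bracket: it should be $-\big(\avsum_j|x_j|-|x|\big)t$, not $+$; but this is irrelevant once you use the finer decomposition above.)
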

\begin{proof}
We estimate
\mlt{
\NV[|x|t+L(|x|\sqrt{st} + M(z-|x|)s)] \\\geq \sum\limits_{j=1}^m t_j (|x| - m^{-\alpha} |x_j|)+ Lt\Big(|x| - m^{-\alpha/2}\Big(\avsum\limits_{j=1}^{m} |x_j|^2\Big)^\frac12 + M\Big(\avsum\limits_{j=1}^{m}|x_j| - |x|\Big)\Big);
}
here we have used that~$s_j \geq t_j$ for all~$j$ (and also~\eqref{LastTwoSummands}). Similar to the proof of Theorem~\ref{SubcriticalTheorem}, by Lemma~\ref{FlatRankOne}, it suffices to consider the case where~$\vec{x}$ is close to some~$v\otimes a$ and~$x$ is close to~$a$ (the other cases are covered by a combination of a compactness argument and the flat/convex argument). What is more, we may assume~$v\otimes a$ is an extremal vector (say, by~\eqref{L2NonExtremal}). Let~$H$ be the support of~$v$. We normalize~$x$ in such a way that~$\scalprod{x}{a} = 1$ and use Lemma~\ref{LastTwoSummands} (or~\eqref{WithoutST}):
\eq{
\NV[|x|t+L(|x|\sqrt{st} + M(z-|x|)s)] \geq \sum\limits_{j=1}^m t_j\Big(|x| - m^{-\alpha}|x_j|\Big)+Lt\sum\limits_{i=1}^m|x_i - (1+v_i)a|.
}
Note that for~$j\notin H$ the expression in the parenthesis is always non-negative since~$x_j$ is close~$0$ in this case, while~$x$ is close to~$a$. Fix some~$j\in H$. %Let us consider the affine space
%\eq{
%\Set{\X \in (\R^\ell)^m}{\vec{x} \in W, \scalprod{x}{a}=1}.
%}
The function~$\X \mapsto |x|-m^{-\alpha}|x_j|$ is Lipschitz on~$\R^{m\ell}$ and vanishes at~$({\bf 1}_{[1..m]}+v)\otimes a$, which means its absolute value is bounded by
\eq{
L\sum\limits_{i=1}^m|x_i - (1+v_i)a|,
}
provided~$L$ is sufficiently large (because the latter expression is the distance between~$\X$ and~$({\bf 1}_{[1..m]}+v)\otimes a$). 

Thus,
\alg{
|x| - m^{-\alpha}|x_j| \geq 0,\qquad & j\notin H;\\
\big||x| - m^{-\alpha}|x_j|\big| \lesssim \sum\limits_{i=1}^m|x_i - (1+v_i)a|,\qquad & j \in H,
}
and the proof is completed by choosing large~$L$.
\end{proof}

\paragraph{End of proof of Theorem~\ref{Bellster}.} We rewrite the discrepancy of the first summand:
\eq{\label{discroftr}
\NV[|y|t] = \sum\limits_{j=1}^m\Big(|y| - |y+m^{-\alpha}(\varphi[\vec{x}])_j|\Big)t_j.
}
We will treat this expression as a function of~$\X$ and~$\mathcal{T} = (t_1,t_2,\ldots,t_m)$. Without loss of generality, we may assume~$|x|=1$ and~$t=1$. The estimate
\eq{
\big||y| - |y+m^{-\alpha}(\varphi[\vec{x}])_j|\big| \lesssim |\X|
}
permits the application of the flat/convex argument, provided we have good estimates in neighborhoods of flat configurations, because if the configuration~$\X$ satisfies~\eqref{Flatness}, then the discrepancy of the fourth term in~\eqref{BellsterFormula} majorizes the discrepancies of the other three terms. We also note that the set of all~$\eps$-flat configurations is compact, provided we require the normalization~$|x|=1$. In particular, given any flat configuration~$(({\bf 1}_{[1..m]}+v)\otimes a,\mathcal{T}_0)$, it suffices to find its neighborhood and a large constant~$K$ such that
\eq{
\NV\Big[|y|t + K\big(|x|t + L(|x|\sqrt{st} + M(z-|x|))\big)\Big] \geq 0
}
when~$(\X,\mathcal{T})$ lies inside the said neighborhood and~$y$,~$\Z=(z_1,z_2,\ldots,z_m)$, and~$\SSS = (s_1,s_2,\ldots,s_m)$ are arbitrary. If~$v\otimes a$ is not extremal, then the term
\eq{
\NV[|x|t + L(|x|\sqrt{st} + M(z-|x|))]
}
is bounded away from zero (uniformly with respect to~$\Z$ and~$\SSS$) in a sufficiently small neighborhood of~$(({\bf 1}_{[1..m]}+v)\otimes a,\mathcal{T}_0)$ (say, by~\eqref{L2NonExtremal}), and the problem reduces to the choice of sufficiently large~$K$. Assume~$v\otimes a$ is extremal. As usual, we renormalize~$x$ to~$\scalprod{x}{a}=1$. We estimate
\eq{
\NV[|y|t] \geq -\sum\limits_{j\notin H}|\X|t_j -\sum\limits_{j\in H}\Big(\sum\limits_{i=1}^m|x_i - (1+v_i)a|\Big)t_j;
}
this bound comes from~\eqref{discroftr} and the cancellation condition imposed on~$\varphi$ (for any~$j\in H$ we have~$(\varphi(v\otimes a))_j = 0$; recall also that~$D$ given in~\eqref{Dformula} is the distance between~$\X$ and~$({\bf 1}_{[1..m]} + v)\otimes a$). The second sum is compensated by the discrepancy of~$|x|\sqrt{st} + M (z-|x|)s$, by Lemma~\ref{LastTwoSummands}. As for the first sum, we have
\eq{
\sum\limits_{j\notin H}|\X|t_j\lesssim \sum\limits_{j\notin H}(|x| -m^{-\alpha}|x_j|)t_j,
}
provided~$x$ is sufficiently close to~$v\otimes a$. Thus, this part of the discrepancy may be compensated by~$\NV[|x|t]$ by choosing sufficiently large~$C_1$.\qed

\section{Two examples}\label{S6}
Let now~$m = \mu d$, where~$\mu$ and~$d$ are natural numbers. We equip the set~$[1\twodots m]$ with the group structure of~$(\ZZ_\mu)^d$; this also identifies it with the cube~$[0\twodots \mu-1]^d$ in a natural way. Let
\eq{
e_{j} = (\underbrace{0,0,\ldots,0}_{j-1\, \text{times}},1,0,\ldots,0) \in [0\twodots\mu-1]^d,\quad j=1,2,\ldots,d,
}
be the generators of~$(\ZZ_\mu)^d$. It will be convenient to consider complex-valued functions and martingales, i.e. we set~$\ell = 2l$ and identify~$\C^l$ and~$\R^\ell$ in a natural way.  In our two main examples~$l = d$. The first example will model the space~$\BV(\R^d)$,
\eq{\label{Wg}
\Wg = \Set{g\in V\otimes \C^d}{\exists f\in \R^m \quad \text{s. t.}\quad \forall x\in [0\twodots\mu-1]^d, j \in [1\twodots d],\quad g_j(x) = f(x+e_j) - f(x)}.
}
The second will model the space of solenoidal charges,
\eq{\label{Wd}
\Wd = \Set{g\in V\otimes \C^d}{\forall x\in [0\twodots \mu-1]^d\qquad \sum\limits_{j=1}^d(g_j(x+e_j) - g_j(x)) = 0}.
}
Before we pass to the details, we survey simple facts about translation invariant spaces. 
\begin{Def}
We say that a linear subspace~$W\subset V\otimes \C^l$ is translation invariant if for any~$f\in W$ and~$y \in (\ZZ_\mu)^d$\textup, the function~$[0\twodots \mu-1]^d\ni x\mapsto f(x+y)$ also belongs to~$W$\textup, the addition sign means addition in~$(\ZZ_\mu)^d$. 
\end{Def}
Note that the spaces~$\Wg$ and~$\Wd$ are translation invariant. Let us define the Fourier transform of a function~$f\colon (\ZZ_\mu)^d\to \C^l$ by the formula
\eq{
\hat{f}(\gamma) = \sum\limits_{x\in (\ZZ_\mu)^d}e^{-2\pi i \frac{\gamma \cdot x}{\mu}}f(x),\qquad \gamma \in (\ZZ_\mu)^d,
}
where~$\gamma\cdot x$ is short for~$\sum_{j=1}^d\gamma_jx_j\in \ZZ_\mu$, and we interpret this expression as a residue modulo~$\mu$ (a natural number in~$[0\twodots\mu-1]$). The inverse Fourier transform is then given by the formula
\eq{
\check{g}(x) = m^{-1}\sum\limits_{\gamma\in (\ZZ_\mu)^d}e^{2\pi i \frac{\gamma \cdot x}{\mu}}g(\gamma),\qquad x \in (\ZZ_\mu)^d.
}
We refer the reader to the book~\cite{Rudin1962} for introduction to abstract Fourier analysis (though our case of finite groups is elementary, sometimes the abstract intuition seems useful).
The following lemma is simple, so, we leave it without proof.
\begin{Le}
The space~$W \subset V\otimes \C^l$ is translation invariant if and only if there exist linear spaces~$\Omega(\gamma) \subset \C^l$\textup,~$\gamma \in (\ZZ_\mu)^d\setminus \{0\}$\textup, such that
\eq{
W = \Set{f\in V\otimes \C^l}{\forall \gamma \in (\ZZ_\mu)^d\setminus \{0\}\qquad \hat{f}(\gamma) \in \Omega(\gamma)}.
}
\end{Le}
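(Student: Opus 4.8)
The plan is to prove both implications of the equivalence by passing to the Fourier side. The key observation is that the Fourier transform diagonalizes the translation action: if $\tau_y f(x) = f(x+y)$, then $\widehat{\tau_y f}(\gamma) = e^{2\pi i \gamma\cdot y/\mu}\hat f(\gamma)$, so on each frequency $\gamma$ the translation group acts by scalar multiplication. This immediately shows that the condition ``$\hat f(\gamma)\in\Omega(\gamma)$ for all $\gamma\neq 0$'' defines a translation invariant space, giving the ``if'' direction: each set $\{f : \hat f(\gamma)\in\Omega(\gamma)\}$ is translation invariant, and an intersection of translation invariant spaces is translation invariant; the extra constraint $f\in V\otimes\C^l$ (i.e. $\sum_j f_j = 0$, equivalently $\hat f(0)=0$) is itself translation invariant since $\sum_j f_j$ is unchanged by translating the argument.

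For the ``only if'' direction, let $W$ be translation invariant. First I would record the Fourier description of $V\otimes\C^l$: writing $\mathbf 1 = (1,1,\ldots,1)$, membership $f\in V\otimes\C^l$ means each coordinate function has vanishing mean, i.e. $\hat f(0) = 0$. Now for each $\gamma\in(\ZZ_\mu)^d$ set $\Omega(\gamma) = \{\hat f(\gamma) : f\in W\}\subset\C^l$, a linear subspace since the Fourier transform is linear and $W$ is a subspace. Clearly $W\subset\{f\in V\otimes\C^l : \hat f(\gamma)\in\Omega(\gamma)\ \forall\gamma\neq 0\}$ (the membership in $V\otimes\C^l$ forcing $\hat f(0)=0$ means we may harmlessly ignore $\gamma=0$). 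The content is the reverse inclusion. Here I would use that $W$, being translation invariant, is a module over the group algebra $\C[(\ZZ_\mu)^d]$, which by the finite Fourier transform is isomorphic to $\prod_{\gamma}\C$ acting coordinatewise on frequencies; concretely, for each fixed $\gamma_0$ the ``spectral projection'' $P_{\gamma_0}f := $ (inverse transform of the function equal to $\hat f(\gamma_0)$ at $\gamma_0$ and $0$ elsewhere) is a $\C$-linear combination of translates of $f$, hence maps $W$ into $W$. Therefore, given any $g\in V\otimes\C^l$ with $\hat g(\gamma)\in\Omega(\gamma)$ for all $\gamma\neq 0$, pick for each such $\gamma$ some $f^{(\gamma)}\in W$ with $\hat{f^{(\gamma)}}(\gamma) = \hat g(\gamma)$; then $P_\gamma f^{(\gamma)}\in W$ is supported at frequency $\gamma$ with the right value there, and $g = \sum_{\gamma\neq 0} P_\gamma f^{(\gamma)}$ (using $\hat g(0)=0$), an element of $W$.

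I expect the only mildly delicate point to be the verification that the spectral projector $P_\gamma$ is realized as a combination of translates — but this is exactly the statement that the indicator of $\{\gamma\}$ on the frequency side is the Fourier transform of the function $x\mapsto m^{-1}e^{2\pi i\gamma\cdot x/\mu}$, so $P_\gamma f = m^{-1}\sum_{y}e^{-2\pi i\gamma\cdot y/\mu}\tau_y f$, and translation invariance of $W$ closes the argument. Everything else is routine bookkeeping with the finite Fourier transform on $(\ZZ_\mu)^d$, which is why the lemma is stated without proof; the proposal above is essentially the one-paragraph argument one would write if a proof were demanded.
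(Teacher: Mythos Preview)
Your argument is correct and is exactly the standard one: the Fourier transform turns the translation action into scalar multiplication frequency-by-frequency, so a subspace is translation invariant if and only if it is the preimage under $f\mapsto(\hat f(\gamma))_\gamma$ of a product $\prod_\gamma \Omega(\gamma)$; your use of the spectral projector $P_\gamma = m^{-1}\sum_y e^{-2\pi i\gamma\cdot y/\mu}\tau_y$ to show that $W$ contains each single-frequency piece is the right way to get the nontrivial inclusion. The paper gives no proof of this lemma (it is stated as simple and left without proof), so there is nothing further to compare; your write-up is precisely the routine verification the author had in mind.
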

\begin{Rem}
Since any element of~$W$ has mean zero\textup,~$\hat{f}(0) = 0$ for any~$f\in W$. We may set~$\Omega(0) = \{0\}$.
\end{Rem}
In the case~$W = \Wg$, the corresponding spaces~$\Omega(\gamma)$ are complex lines defined by the formula
\eq{\label{OmegaGrad}
\Og(\gamma) = \big(e^{2\pi i\frac{\gamma_1}{\mu}}-1, e^{2\pi i\frac{\gamma_2}{\mu}}-1,\ldots, e^{2\pi i\frac{\gamma_d}{\mu}}-1\big)\cdot\C,\qquad \gamma = (\gamma_1,\gamma_2,\ldots,\gamma_d)\in (\ZZ_{\mu})^d\setminus\{0\}.
}
In the case~$W = \Wd$ the spaces~$\Omega(\gamma)$ are complex hyperplanes
\eq{
\Od(\gamma) = \Set{\zeta \in\C^l}{\sum\limits_{j=1}^d\zeta_j(e^{2\pi i \frac{\gamma_j}{\mu}}-1) = 0}, \qquad \gamma = (\gamma_1,\gamma_2,\ldots,\gamma_d)\in (\ZZ_{\mu})^d\setminus\{0\}.
}

If~$W$ is a translation invariant space, the sets~$\Omega^{-1}(a)$, where~$a\in \C^l$, play an important role:
\eq{
\Omega^{-1}(a) = \Set{\gamma \in (\ZZ_\mu)^d}{a\in \Omega(\gamma)},
}
since these sets contain the spectra of rank-one vectors. By spectrum of a function we mean the support of its Fourier transform.
\begin{Le}\label{FourierLemma}
Let~$k \in [0\twodots d]$. Assume that~$W$ is a translation invariant subspace of~$V\otimes \C^l$ such that
\eq{\label{AntiSymmetry}
\#\big(\Omega^{-1}(a)\cap(-\Omega^{-1}(a))\big) \leq \mu^{k}-1
} 
for any~$a\in \C^l\setminus \{0\}$. Assume~\eqref{AntiSymmetry} turns into equality for at least one vector~$a$ and that for this special~$a$ we have~$\Omega^{-1}(a) = \Gamma\setminus \{0\}$\textup, where~$\Gamma$ is a subgroup of index~$d-k$ in~$(\ZZ_\mu)^d$. Then\textup,~$W$ is a geometric space of order~$k/d$.
\end{Le}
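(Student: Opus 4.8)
The plan is to compute the function $\kappa$ explicitly enough to see it is affine, using the translation-invariant structure. First I would recall that by Definition \ref{GeometricSpace} we must show $\kappa$ is affine on $[0,1]$; since $\kappa$ is always convex, non-increasing, and $\kappa(1)=0$, it suffices to show $\kappa(0)\le -\kappa'(1)$, i.e. that the "endpoint entropy" bound of Proposition \ref{GeometricStructure}, namely $\avsum_{j=1}^m(1+v_j)\log(1+v_j)\le \frac{k}{d}\log m$ for all $v\in\M^W$, holds, and that equality is attained. So the whole lemma reduces to verifying the two conditions of Proposition \ref{GeometricStructure} with $\alpha=k/d$. Note $m=\mu^d$, so $\alpha\log m = k\log\mu$ and $m^{1-\alpha}=\mu^{d-k}$, which is exactly the index-type quantity appearing in the hypothesis; this is the arithmetic that makes the statement plausible.

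Next I would translate membership $v\otimes a\in W$ into the Fourier condition: $v\otimes a\in W$ means $\widehat{v\otimes a}(\gamma)=\hat v(\gamma)\,a\in\Omega(\gamma)$ for all $\gamma\ne 0$, which (for $a\ne 0$) says $\hat v(\gamma)=0$ whenever $\gamma\notin\Omega^{-1}(a)$. In other words, the spectrum of $v$ is contained in $\Omega^{-1}(a)\cup\{0\}$. Since $v$ is real-valued, its spectrum is symmetric: $\hat v(-\gamma)=\overline{\hat v(\gamma)}$, so $\supp\hat v\subset \bigl(\Omega^{-1}(a)\cap(-\Omega^{-1}(a))\bigr)\cup\{0\}$. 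By hypothesis \eqref{AntiSymmetry} the set $S:=\Omega^{-1}(a)\cap(-\Omega^{-1}(a))$ has at most $\mu^k-1$ elements. Now the key estimate is an $\ell^\infty$–vs–$\ell^2$ (i.e. "few nonzero Fourier coefficients") bound: a function $f:(\ZZ_\mu)^d\to\C$ whose spectrum (including frequency $0$) has at most $N$ elements satisfies $\|f\|_{L^\infty}\le \frac{\sqrt N}{\sqrt m}\|f\|_{L^2}$ — indeed $\|f\|_{L^\infty}\le \frac1m\sum_\gamma|\hat f(\gamma)|\le \frac{\sqrt N}{m}\bigl(\sum_\gamma|\hat f(\gamma)|^2\bigr)^{1/2}=\frac{\sqrt N}{\sqrt m}\|f\|_{L^2}$ by Cauchy--Schwarz and Parseval. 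Applying this to $f=\mathbf 1+v$, whose spectrum is contained in $S\cup\{0\}$ and hence has at most $\mu^k$ elements, and using $\|\mathbf 1+v\|_{L^2}=\bigl(\avsum_j|1+v_j|^2\bigr)^{1/2}$ and $m=\mu^d$, we get
\eq{\label{LinftyL2bound}
\max_{j}|1+v_j| = \|\mathbf 1+v\|_{L_\infty} \le \mu^{(k-d)/2}\Big(\avsum_{j=1}^m|1+v_j|^2\Big)^{1/2}.
}
Since $v$ has mean zero, $\avsum_j|1+v_j|^2 = 1+\avsum_j v_j^2$; but I want a bound by a constant, so instead I should note $\avsum_j|1+v_j|^2\le \max_j|1+v_j|\cdot\avsum_j|1+v_j| = \max_j|1+v_j|$ using $v_j\ge-1$ and $\avsum_j(1+v_j)=1$. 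Feeding this back into \eqref{LinftyL2bound} gives $\max_j|1+v_j|\le \mu^{(k-d)/2}(\max_j|1+v_j|)^{1/2}$, hence $\max_j(1+v_j)\le \mu^{k-d}<1$? That is false. The correct route, then, is to bound $\kappa_v(\theta)$ directly: $\kappa_v(\theta)=\log\|\mathbf 1+v\|_{L_{1/\theta}}$, and for $\theta\le 1/2$ interpolation between $L_1$ (where $\|\mathbf 1+v\|_{L_1}=1$) and $L_2$ gives $\|\mathbf 1+v\|_{L_{1/\theta}}\le\|\mathbf1+v\|_{L_1}^{2-1/\theta}\|\mathbf1+v\|_{L_2}^{1/\theta-1}$... this is circular too. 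The clean statement is: from \eqref{LinftyL2bound} and $\|\mathbf1+v\|_{L_2}^2\le \|\mathbf1+v\|_{L_\infty}\|\mathbf1+v\|_{L_1}=\|\mathbf1+v\|_{L_\infty}$ we get $\|\mathbf1+v\|_{L_\infty}^{1/2}\le\mu^{(k-d)/2}$, i.e. $\max_j(1+v_j)\le\mu^{k-d}$ — again impossible since the average of $1+v_j$ is $1$. The resolution is that $S\cup\{0\}$ really has size at most $\mu^k$ only when $0\in S$; if $0\notin S$ the spectrum of $\mathbf1+v$ has size at most $\mu^k$ anyway because frequency $0$ must be present. In any case the correct bound is $\max_j(1+v_j)\le\mu^{k}$: from $\|\mathbf1+v\|_\infty\le \frac1m\sum|\widehat{\mathbf1+v}(\gamma)|$, and $\sum_{\gamma\in\supp}|\widehat{\mathbf1+v}(\gamma)|\le m\cdot\#\supp\cdot\max... $ — the honest bound is $\|f\|_\infty\le\#\supp\hat f\cdot\|f\|_1/1$? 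No: $\widehat{\mathbf1+v}(0)=m$, and $|\widehat{\mathbf1+v}(\gamma)|\le\|\mathbf1+v\|_{L_1}\cdot m = m$ for all $\gamma$, so $\|\mathbf1+v\|_\infty\le\frac1m\cdot\mu^k\cdot m=\mu^k$. That is the bound, and it gives $(1+v_j)\le\mu^k$ for all $j$, hence $\avsum_j(1+v_j)\log(1+v_j)\le\avsum_j(1+v_j)\cdot k\log\mu=k\log\mu=\frac{k}{d}\log m$, which is condition 1) of Proposition \ref{GeometricStructure} with $\alpha=k/d$.

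Finally I would verify condition 2): take the special vector $a$ for which \eqref{AntiSymmetry} is an equality with $\Omega^{-1}(a)=\Gamma\setminus\{0\}$, $\Gamma$ a subgroup of index $\mu^{d-k}$ (here I read "index $d-k$" in the statement as "index $\mu^{d-k}$", matching $\#\Gamma=\mu^k$). Let $\Gamma^\perp=\{x:\gamma\cdot x\equiv0\ (\mathrm{mod}\ \mu)\ \forall\gamma\in\Gamma\}$ be the annihilator, a subgroup of order $\mu^{d-k}$, and set $v$ so that $\mathbf1+v=\frac{\mu^d}{|\Gamma^\perp|}\chi_{\Gamma^\perp}=\mu^{k}\chi_{\Gamma^\perp}$; then $\widehat{\mathbf1+v}=\frac{\mu^d}{|\Gamma^\perp|}\widehat{\chi_{\Gamma^\perp}}$ is supported exactly on $\Gamma$ (standard Poisson summation for the subgroup $\Gamma^\perp$), so $\hat v$ is supported on $\Gamma\setminus\{0\}\subset\Omega^{-1}(a)$, hence $v\otimes a\in W$; moreover $v_j\ge-1$, $v$ is real, $1+v_j\in\{0,\mu^k\}$, and $|\{j:1+v_j=\mu^k\}|=|\Gamma^\perp|=\mu^{d-k}=m^{1-\alpha}$, so $v\in\M^W$ and \eqref{entropyIneq} is an equality on it. This is exactly the structure required by Proposition \ref{GeometricStructure}, so $W$ is geometric of order $k/d$, completing the proof.

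\medskip

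The main obstacle, as the fumbling above shows, is getting the right Fourier-analytic inequality and being careful about whether frequency $0$ is counted in the support; the clean input is simply $\|f\|_{L_\infty}\le\frac1m\#(\supp\hat f)\cdot\|f\|_{L_1}$ applied to $f=\mathbf1+v$ with $\|f\|_{L_1}=1$, which immediately yields $1+v_j\le\mu^k$ and hence the entropy bound. The rest is the standard subgroup/annihilator computation to exhibit the extremal vector.
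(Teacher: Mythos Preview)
Your proposal is correct and follows essentially the same route as the paper: bound $\supp\hat v$ by $\Omega^{-1}(a)\cap(-\Omega^{-1}(a))$ via reality of $v$, use $|\hat w(\gamma)|\le m$ for $w=\mathbf 1+v$ to get $\|\mathbf 1+v\|_\infty\le\mu^k$, deduce the entropy/affine bound, and exhibit the extremal vector as $\mu^k\chi_{\Gamma^\perp}\otimes a$. The only cosmetic differences are that the paper bounds $\kappa_v(\theta)$ directly (via $\kappa_v(0)\le k\log\mu$ and convexity) rather than routing through the converse of Proposition~\ref{GeometricStructure}, and that your final ``clean'' inequality $\|f\|_{L_\infty}\le\frac1m\#(\supp\hat f)\cdot\|f\|_{L_1}$ has a spurious $1/m$ when $\|f\|_{L_1}$ is the average --- the explicit computation you give just before it is the correct one.
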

\begin{proof}
Let~$v\otimes a \in V\otimes \C^l$ be a rank-one vector. By definition,
\eq{
\hat{v}(\gamma)a\in \Omega(\gamma),\qquad \gamma \in (\ZZ_\mu)^d.
}
Here we treat~$v$ as a real valued function on~$[0\twodots\mu-1]^d$. Therefore,~$a\in \Omega(\gamma)$ whenever~$\hat{v}(\gamma) \ne 0$. Since~$\hat{v}(-\gamma) = \overline{\hat{v}(\gamma)}$, we have
\eq{
\supp \hat{v} \subset \Omega^{-1}(a)\cap(-\Omega^{-1}(a)).
}
Assume now that~$v(x) \geq -1$ for any~$x \in [0\twodots\mu-1]^d$ and let~$w(x) = 1+v(x)$. Then,~$\hat{w}$ is supported in
\eq{
\big(\Omega^{-1}(a)\cap(-\Omega^{-1}(a))\big)\cup \{0\}
}
and, since~$\sum_x w(x) = m$ and the numbers~$w(x)$ are non-negative,~$|\hat{w}(\gamma)| \leq m$ for any~$\gamma$. From this information and the bound~\eqref{AntiSymmetry}, we conclude that
\eq{\label{L_1Estimates}
|w(x)| = m^{-1}\Big|\sum\limits_{\gamma\in (\ZZ_\mu)^d}e^{2\pi i \frac{\gamma \cdot x}{\mu}}\hat{w}(\gamma)\Big| \leq \mu^{k}, \qquad x\in [0\twodots \mu-1]^d.
}
Therefore, by~\eqref{Kappa0},~$\kappa_v(0) \leq k \log \mu$. By convexity of the function~$\kappa_v$ and the identity~$\kappa_v(1) = 0$,
\eq{
\kappa_v(\theta) \leq k(\log \mu) (1-\theta).
}
On the other hand, if the set
\eq{
\big(\Omega^{-1}(a)\cap(-\Omega^{-1}(a))\big)\cup \{0\}
}
is a subgroup~$\Gamma$, then we set~$w = |\Gamma|\chi_{\Gamma^\perp}$ and see that~$v\otimes a \in W$ for the corresponding~$v$. For such a choice of~$v$ and~$w$, the inequalities~\eqref{L_1Estimates} turn into equalities when~$x\in \Gamma^\perp$, which means
\eq{
\kappa_v(\theta) = k\log \mu (1-\theta) = \frac{k}{d}\log m (1 - \theta),
}
and~$\Gamma^\perp$ is the support of an extremal vector. Thus, the function~$\kappa$ is linear and~$W$ is a geometric space.
\end{proof}
\begin{Rem}\label{SupportsOfExtremalVectors}
The proof above says that extremal vectors correspond to those~$a\in \C^l\setminus \{0\}$\textup, for which the set
\eq{
\big(\Omega^{-1}(a)\cap(-\Omega^{-1}(a))\big)\cup \{0\}
}
is a subgroup~$\Gamma$ of index~$d-k$. If~$v\otimes a$ is an extremal vector\textup, then its support is a coset of~$\Gamma^\perp$.
\end{Rem}
\begin{Def}
A subset~$\ell_D\subset (\ZZ_\mu)^d$ is called a combinatorial line generated by a set~$D \subset [0\twodots\mu-1]$ provided
\eq{
\ell_D = \Set{x\in (\ZZ_\mu)^d}{\exists b \in \ZZ_\mu\quad\quad \text{ such that } x_j = 0,  j \notin D,\quad x_j = b, j \in D}.
}
\end{Def}
\begin{Le}\label{WgLemma}
Let~$W = \Wg$. Then\textup, for any~$a\in \C^d\setminus \{0\}$\textup, either
\eq{
(\Og^{-1}(a)\cap (-\Og^{-1}(a))) \cup \{0\} = \ell_D,
}
for some~$D \subset [1\twodots d]$\textup, or~$\Og^{-1}(a)\cap (-\Og^{-1}(a)) = \varnothing$.
\end{Le}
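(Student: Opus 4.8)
The plan is to unwind the definitions and reduce everything to elementary complex arithmetic with the roots of unity $\zeta_j := e^{2\pi i\gamma_j/\mu}$. Fix $a\in \C^d\setminus\{0\}$. By~\eqref{OmegaGrad}, for $\gamma\ne 0$ the space $\Og(\gamma)$ is the complex line spanned by the non-zero vector $(\zeta_1-1,\ldots,\zeta_d-1)$ (non-zero because $\gamma\ne 0$ forces some $\zeta_j\ne 1$), so $\gamma\in\Og^{-1}(a)$ precisely when $a$ is proportional to $(\zeta_1-1,\ldots,\zeta_d-1)$; note also $0\notin\Og^{-1}(a)$ since $\Og(0)=\{0\}$ while $a\ne 0$. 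Replacing $\gamma$ by $-\gamma$ replaces each $\zeta_j$ by $\overline{\zeta_j}$ and hence $(\zeta_j-1)_j$ by its coordinatewise conjugate. Therefore $\gamma\in\Og^{-1}(a)\cap(-\Og^{-1}(a))$ if and only if the non-zero vector $a$ is proportional both to $(\zeta_j-1)_j$ and to $(\overline{\zeta_j}-1)_j$, which (as $a\ne 0$) forces these two vectors to be proportional to one another.

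First I would determine when $(\zeta_j-1)_j$ and $(\overline{\zeta_j}-1)_j$ are proportional. The key identity is that for every index $j$ with $\gamma_j\ne 0$ one has $(\zeta_j-1)/(\overline{\zeta_j}-1)=-\zeta_j$ (immediate from $\overline{\zeta_j}=\zeta_j^{-1}$). Hence a common proportionality constant $c$ with $(\zeta_j-1)_j=c(\overline{\zeta_j}-1)_j$ must equal $-\zeta_j$ for every $j$ with $\gamma_j\ne 0$; since $b\mapsto e^{2\pi i b/\mu}$ is injective on $\{0,1,\ldots,\mu-1\}$, this means all non-zero coordinates of $\gamma$ coincide, say they all equal a common value $b\in\{1,\ldots,\mu-1\}$, and $\gamma$ is supported on some $D\subseteq[1\twodots d]$. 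Conversely, for any such $\gamma$ one has $(\zeta_j-1)_j=(e^{2\pi i b/\mu}-1)\chi_D$, where $\chi_D$ denotes the $0$--$1$ indicator vector of $D$; this is visibly a non-zero complex multiple of $\chi_D$, hence proportional to its conjugate.

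It remains to collect the pieces. For a $\gamma$ of the special form just described, the spanning vector of $\Og(\gamma)$ is a non-zero multiple of $\chi_D$, so $\gamma\in\Og^{-1}(a)\cap(-\Og^{-1}(a))$ if and only if $a$ is proportional to $\chi_D$, i.e. $a=\lambda\chi_D$ with $\lambda\ne 0$; this forces $D=\supp a$ and $a$ to be constant on its support. Consequently: if $a$ is not constant on $\supp a$, no $\gamma$ meets the requirement and $\Og^{-1}(a)\cap(-\Og^{-1}(a))=\varnothing$; if instead $a=\lambda\chi_D$ with $\lambda\ne 0$ (in which case $D=\supp a$ is uniquely determined), then $\Og^{-1}(a)\cap(-\Og^{-1}(a))=\{b\chi_D\mid b\in\ZZ_\mu\setminus\{0\}\}$, and adjoining $0=0\cdot\chi_D$ gives exactly the combinatorial line $\ell_D$. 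This establishes the asserted dichotomy.

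I do not anticipate a genuine obstacle: the argument is a direct translation of the definitions, and the one computational point that needs care is the identity $(\zeta_j-1)/(\overline{\zeta_j}-1)=-\zeta_j$ together with the conclusion that proportionality of $(\zeta_j-1)_j$ with its conjugate pins down all the non-zero digits of $\gamma$. Everything afterwards is bookkeeping with supports and indicator vectors.
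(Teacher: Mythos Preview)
Your proof is correct. Both your argument and the paper's reduce the lemma to elementary manipulations with the roots of unity~$\zeta_j=e^{2\pi i\gamma_j/\mu}$, but the key computational observations differ. The paper first analyzes~$\Og^{-1}(a)$ alone: given two indices~$j_1,j_2$ with~$a_{j_1}\ne a_{j_2}$ both non-zero, the ratio~$a_{j_1}/a_{j_2}$ pins down~$\gamma_{j_1},\gamma_{j_2}$ uniquely via the fact that~$e^{ix}-1=\alpha(e^{iy}-1)$ has at most one solution for~$\alpha\ne 1$; this forces~$\#\Og^{-1}(a)\le 1$ unless all non-zero coordinates of~$a$ coincide, and then the intersection with~$-\Og^{-1}(a)$ is handled separately. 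You instead attack the intersection directly, exploiting that~$-\gamma$ corresponds to coordinatewise conjugation and using the identity~$(\zeta_j-1)/(\overline{\zeta_j}-1)=-\zeta_j$ to conclude in one stroke that all non-zero digits of~$\gamma$ must agree. Your route is slightly more economical and makes the role of the symmetry~$\gamma\leftrightarrow-\gamma$ transparent; the paper's route yields as a byproduct the stronger statement that~$\Og^{-1}(a)$ itself is already very small when~$a$ is not constant on its support.
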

\begin{proof}
Let~$a\in \C^l \setminus \{0\}$. Assume some points~$\gamma^1,\gamma^2,\ldots,\gamma^s$ belong to~$\Omega^{-1}_\nabla(a)$. By~\eqref{OmegaGrad}, this means there exist non-zero complex numbers~$\lambda_1,\lambda_2,\ldots,\lambda_s$ such that
\eq{
a = \lambda_k \big(e^{2\pi i\frac{\gamma_1^k}{\mu}}-1, e^{2\pi i\frac{\gamma_2^k}{\mu}}-1,\ldots, e^{2\pi i\frac{\gamma_d^k}{\mu}}-1\big),\qquad k= 1,2,\ldots,s.
}
Define
\eq{\label{DDefin}
D = \set{j\in[1\twodots d]}{a_j \ne 0}.
}
Note that~$\gamma^k_j = 0$ for any~$k\in[1\twodots s]$ and~$j\notin D$. If there are at least two distinct non-zero values~$a_{j_1}$ and~$a_{j_2}$, then the ratio
\eq{
1\ne \frac{a_{j_1}}{a_{j_2}} = \frac{e^{2\pi i\frac{\gamma_{j_1}^k}{\mu}}-1}{e^{2\pi i\frac{\gamma_{j_2}^k}{\mu}}-1}
}
completely defines the elements~$\gamma_{j_1}^k$ and~$\gamma_{j_2}^k$ (i.e. these values do not depend on~$k$) since the equation
\eq{
e^{ix} - 1 =\alpha (e^{iy}-1),\qquad x,y\in (0,2\pi),
}
has at most one solution when~$\alpha \ne 1$.  Thus, either all non-zero~$a_j$ are equal, or~$s=1$. In the latter case~$a\notin \Omega_\nabla(-\gamma)$. In the former case,~$\Omega_\nabla^{-1}(a)\cap(-\Omega_\nabla^{-1}(a)) = \ell_D$ with~$D$ given in~\eqref{DDefin}.
\end{proof}
\begin{Cor}
The space~$\Wg$ is a geometric space of order~$1/d$.
\end{Cor}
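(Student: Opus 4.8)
The plan is to deduce this corollary directly from Lemma~\ref{FourierLemma}, applied with $k=1$, feeding it the structural description of the symmetrized preimage sets $\Og^{-1}(a)\cap(-\Og^{-1}(a))$ provided by Lemma~\ref{WgLemma}. Recall that $\Wg\subset V\otimes\C^d$, so we are in the setting $l=d$, and the spaces $\Omega(\gamma)$ attached to this translation invariant subspace are the complex lines $\Og(\gamma)$ of~\eqref{OmegaGrad}.

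First I would verify hypothesis~\eqref{AntiSymmetry} for $k=1$. By Lemma~\ref{WgLemma}, for every $a\in\C^d\setminus\{0\}$ the set $(\Og^{-1}(a)\cap(-\Og^{-1}(a)))\cup\{0\}$ is either a single combinatorial line $\ell_D$ or is empty (beyond the point $0$). A combinatorial line $\ell_D$ with $D\neq\varnothing$ has exactly $\mu$ points, one for each value $b\in\ZZ_\mu$ (if $D=\varnothing$ it reduces to $\{0\}$). Since $\Og(0)=\{0\}$ and $a\neq0$, we have $0\notin\Og^{-1}(a)$, so removing the point $0$ gives $\#\big(\Og^{-1}(a)\cap(-\Og^{-1}(a))\big)\le\mu-1=\mu^{1}-1$, which is~\eqref{AntiSymmetry} with $k=1$.

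Next I would exhibit the extremal vector realizing equality. Take $a=(1,1,\ldots,1)\in\C^d$. If $\gamma\in\Og^{-1}(a)$, then by~\eqref{OmegaGrad} the numbers $e^{2\pi i\gamma_j/\mu}-1$ must all coincide (they are equal to a common nonzero multiple of $a$), and since $b\mapsto e^{2\pi i b/\mu}$ is injective on $\ZZ_\mu$, all coordinates of $\gamma$ are equal; moreover $\gamma\neq0$. Hence $\Og^{-1}(a)=\Gamma\setminus\{0\}$, where $\Gamma=\{(b,b,\ldots,b)\mid b\in\ZZ_\mu\}$ is a cyclic subgroup of order $\mu$, i.e. of index $\mu^{d-1}$ in $(\ZZ_\mu)^d$. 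This set is symmetric, so $\Og^{-1}(a)\cap(-\Og^{-1}(a))=\Gamma\setminus\{0\}$ has exactly $\mu-1$ elements and~\eqref{AntiSymmetry} is an equality for this $a$. All hypotheses of Lemma~\ref{FourierLemma} with $k=1$ are met, and therefore $\Wg$ is a geometric space of order $k/d=1/d$.

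Since both Lemma~\ref{WgLemma} and Lemma~\ref{FourierLemma} are already at our disposal, there is essentially no obstacle; the only point requiring a little care is that Lemma~\ref{FourierLemma} asks for the unsymmetrized preimage $\Og^{-1}(a)$ itself to be a punctured subgroup at the extremal $a$, which is precisely why one must take the fully diagonal $a=(1,\ldots,1)$ rather than an $a$ supported on a proper subset $D\subsetneq[1\twodots d]$ (for such $a$ the symmetrization is the subgroup $\ell_D$, but $\Og^{-1}(a)$ itself need not be symmetric).
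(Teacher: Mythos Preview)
Your proof is correct and follows exactly the paper's approach: apply Lemma~\ref{FourierLemma} with $k=1$, using Lemma~\ref{WgLemma} to supply both the cardinality bound and the subgroup structure (the paper compresses this to the single observation that combinatorial lines are subgroups of the right index). Your closing caveat is harmless but unnecessary: whenever the symmetrized preimage equals $\ell_D\setminus\{0\}$ the vector $a$ is forced to be a scalar multiple of $e_D=\sum_{j\in D}e_j$, and for such $a$ one already has $\Og^{-1}(a)=\ell_D\setminus\{0\}$ unsymmetrized, so any nonempty $D$ would have served equally well.
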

\begin{proof}
We note that each combinatorial line is a subgroup of index~$d-1$ and use Lemma~\ref{FourierLemma}.
\end{proof}
\begin{Rem}\label{GradientExtremalVectors}
The extremal vectors of~$\Wg$ are easy to describe\textup: by Remark~\textup{\ref{SupportsOfExtremalVectors}}, each such vector is defined by its support\textup, which is a coset of the subgroup~$\ell_D^\perp$\textup,~$D\subset [1\twodots d]$\textup:
\eq{\label{WgExtremalSets}
\ell_D^\perp = \Set{x\in [0\twodots\mu-1]^d}{\sum\limits_{j\in D}x_j = 0}; 
}
the corresponding vector~$a$ is parallel to~$\sum_{j\in D}e_j$.
\end{Rem}
It is harder to describe the sets~$\Od^{-1}(a)$ in general. We will only prove that apart from several exclusions, these sets are small provided~$\mu$ is sufficiently large. 
\begin{Le}\label{DivFibers}
Let~$a\in \C^l\setminus \{0\}$. Either~$\#\Od^{-1}(a)\lesssim \mu^{d-2}$\textup, or 
\eq{\label{Ex1}
\exists j\in [1\twodots d]\text{ such that } \qquad \Od^{-1}(a) = \Set{\gamma \in (\ZZ_\mu)^d}{\gamma_j=0}, \text{or}
}
\eq{\label{Ex2}
\exists j_1\ne j_2\in [1\twodots d]\text{ such that} \qquad \Od^{-1}(a) = \Set{\gamma\in (\ZZ_\mu)^d}{\gamma_{j_1} = \gamma_{j_2}}.
}
\end{Le}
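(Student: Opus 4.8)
The plan is to reduce the assertion to a counting problem for sums of roots of unity. Write $\omega=e^{2\pi i/\mu}$ and $S=\sum_{j=1}^{d}a_j$. By the definition of $\Od(\gamma)$, a point $\gamma\in(\ZZ_\mu)^d$ lies in $\Od^{-1}(a)$ if and only if $\sum_{j=1}^{d}a_j\omega^{\gamma_j}=S$. Let $D=\{j : a_j\neq 0\}$ and $k=|D|$; the coordinates $\gamma_j$ with $j\notin D$ are then unconstrained, so $\#\Od^{-1}(a)=\mu^{d-k}N$, where $N$ counts the tuples $(\gamma_j)_{j\in D}$ with $\sum_{j\in D}a_j\omega^{\gamma_j}=S$. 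The case $k=0$ cannot occur since $a\neq 0$; if $k=1$ the equation forces $\gamma_j=0$ for the unique $j\in D$, which is precisely~\eqref{Ex1}; and if $k=2$ with $a_{j_1}+a_{j_2}=0$, it becomes $\omega^{\gamma_{j_1}}=\omega^{\gamma_{j_2}}$, i.e.~$\gamma_{j_1}=\gamma_{j_2}$, which is~\eqref{Ex2}. So it remains to show $N\lesssim_{d}\mu^{k-2}$ in the remaining cases, namely $k\geq 3$, and $k=2$ with $S\neq 0$.

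The elementary building block is a planar observation: for fixed $\alpha,\beta\in\C\setminus\{0\}$ and fixed $c\in\C$, the equation $\alpha\omega^{x}+\beta\omega^{y}=c$ has at most two solutions $(x,y)\in(\ZZ_\mu)^2$, unless the circles $\{\,|z|=|\alpha|\,\}$ and $\{\,|c-z|=|\beta|\,\}$ coincide, which happens exactly when $c=0$ and $|\alpha|=|\beta|$; in that degenerate case the equation reads $\omega^{x-y}=-\beta/\alpha$ and has $\mu$ solutions if $-\beta/\alpha\in\langle\omega\rangle$ (the group of $\mu$-th roots of unity) and none otherwise. When $k=2$ and $S\neq 0$ this gives $N\leq 2$ outright, since here $c=S\neq 0$. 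When $k\geq 3$, fix a pair $p\neq q$ in $D$ and freeze the remaining $k-2$ coordinates (in $\mu^{k-2}$ ways); for each frozen choice the equation in $(\gamma_p,\gamma_q)$ has some target $c$, and the building block yields at most two solutions unless we fall into the degenerate branch. Hence, if some pair $p\neq q$ in $D$ satisfies $|a_p|\neq|a_q|$, or $|a_p|=|a_q|$ but $-a_p/a_q\notin\langle\omega\rangle$, the degenerate branch is empty for that pair and $N\leq 2\mu^{k-2}$.

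It remains to treat $k\geq 3$ when \emph{every} pair $p\neq q$ in $D$ has $|a_p|=|a_q|$ and $-a_p/a_q\in\langle\omega\rangle$. Chaining this relation through three distinct indices of $D$ forces $-1\in\langle\omega\rangle$, so $\mu$ is even and, writing $-a_j/a_{p_0}=\omega^{m_j}$ for a fixed $p_0\in D$, we obtain $a_j=\lambda\omega^{t_j}$ for a common $\lambda\in\C\setminus\{0\}$ and integers $t_j$. Substituting $\beta_j=t_j+\gamma_j$ (a bijection of $(\ZZ_\mu)^D$) turns the defining equation into $\sum_{j\in D}\omega^{\beta_j}=S'$ with $S'=\sum_{j\in D}\omega^{t_j}$ fixed, so $N=M_k(S')$, where $M_k(c):=\#\{(\beta_1,\dots,\beta_k)\in(\ZZ_\mu)^k : \sum_i\omega^{\beta_i}=c\}$. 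The building block with $\alpha=\beta=1$ (freeze all but two of the $\beta_i$; the degenerate branch occurs exactly $M_{k-2}(c)$ times, since $-1\in\langle\omega\rangle$) gives the recursion $M_k(c)\leq 2\mu^{k-2}+\mu\,M_{k-2}(c)$, with $M_1(c)\leq 1$ and $M_2(c)\leq\mu$. Iterating, one finds $M_k(c)\lesssim_{d}\mu^{k-2}$ for $k\geq 3$ (the geometric tail of the error terms dominates the terminal term $\mu^{\lfloor k/2\rfloor}$); the bound genuinely fails only at $k=2$, where $M_2(0)=\mu$, which is exactly the exceptional case~\eqref{Ex2}. Collecting both regimes, $N\lesssim_{d}\mu^{k-2}$ outside cases~\eqref{Ex1} and~\eqref{Ex2}, hence $\#\Od^{-1}(a)=\mu^{d-k}N\lesssim_{d}\mu^{d-2}$.

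The main obstacle is this last regime. One has to recognize that simultaneously defeating the two-circle argument for every pair pins $a$ down to the rigid shape $a_j=\lambda\omega^{t_j}$ (with $\mu$ even), and then to estimate $M_k(c)$ — the number of ordered representations of a complex number as a sum of $k$ roots of unity — uniformly in $c$, \emph{including} the singular target $c=0$. The delicate point is that one must save two full powers of $\mu$ (not one) for $k\geq 3$: the recursion achieves this because the reduced count $M_{k-2}(c)$, although multiplied by $\mu$, itself carries the exponent $k-4$; it is precisely the breakdown of this at $k=2$ that is responsible for the genuine exception~\eqref{Ex2}. Everything else amounts to bookkeeping around $k\leq 2$.
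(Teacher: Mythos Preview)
Your argument is correct, but it takes a noticeably different route from the paper's for the case $k\geq 3$. Both proofs start from the same planar ``two-circle'' observation (Lemma~\ref{Lemma710} for $d=2$ in the paper) and the same reduction to the nonzero coordinates $D$ with the separate handling of $k=1$ and $k=2$. The divergence is in the induction step. The paper slices by a \emph{single} coordinate: fixing $\zeta_1$ turns the problem into a $(k-1)$-dimensional one with target $b' = b - a_1(\omega^{\zeta_1}-1)$, and since $a_1\neq 0$ this target is injective in $\zeta_1$, so the unique ``bad'' target of the $(k-1)=2$ step is hit at most once. This yields the uniform bound $\#S_{a,b}\leq 3\mu^{k-2}$ directly, with no further case analysis. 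You instead slice by \emph{two} coordinates, which forces you to confront the degenerate branch (coincident circles) head-on; you then observe that if any pair $(p,q)$ avoids degeneracy you are done, and otherwise every pair forces $|a_p|=|a_q|$, $-a_p/a_q\in\langle\omega\rangle$, hence (via the three-index chaining) $\mu$ even and $a_j=\lambda\omega^{t_j}$, after which a substitution reduces the count to $M_k(c)$ and a recursion finishes.

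What each buys: the paper's one-coordinate slice is shorter, gives an explicit constant $3$ independent of $d$, and sidesteps the degenerate regime entirely. Your two-coordinate slice costs more bookkeeping and yields a constant growing like $k$, but it isolates genuine structural information --- namely that the only obstruction to the naive $2\mu^{k-2}$ bound is the rigid configuration $a_j=\lambda\omega^{t_j}$, and that even there the root-of-unity representation count $M_k(c)$ is $O_k(\mu^{k-2})$ for $k\geq 3$. That auxiliary fact about $M_k$ is not needed for the lemma but is of independent interest.
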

\begin{Rem}\label{DivergenceExtremalVectors}
The set~\eqref{Ex1} corresponds to~$a$ being the~$j$-the basic vector in~$\C^d$ \textup(or a multiple of it\textup)\textup, the set~\eqref{Ex2} corresponds to~$a$ being the difference of~$j_1$-th and~$j_2$-th basic vectors \textup(or a mutliple of it\textup). Using Remark~\textup{\ref{SupportsOfExtremalVectors},} we may describe the extremal vectors. In the case~\eqref{Ex1}\textup, each such vector is supported by a coset of the subgroup
\eq{
\Gamma^\perp = \ZZ_\mu e_j;
}
in other words\textup, for any extremal vector of this type there exists~$x\in (\ZZ_\mu)^d$ such that it is supported by~$x + \ZZ_\mu e_j$. In the case~\eqref{Ex2}\textup, the support of an extremal vector is a coset of the subgroup
\eq{
\Gamma^\perp = \Set{x\in (\ZZ_\mu)^d}{x_{j_1}+x_{j_2} = 0}.
}
\end{Rem}
The proof of Lemma~\ref{DivFibers} will take some time. We need an auxiliary lemma. Let~$b\in \C$. Consider the set
\eq{
S_{a,b} = \Set{\zeta \in (\ZZ_\mu)^d}{\sum\limits_{j=1}^da_j(e^{\frac{2\pi i \zeta_j}{\mu}}-1) = b}.
}
\begin{Le}\label{Lemma710}
Let all~$a_j$\textup,~$j=1,2,\ldots,d$\textup, be non-zero. If~$d \geq 3$\textup, then~$\#S_{a,b}\lesssim \mu^{d-2}$. In the case~$d=2$ either~$\#S_{a,b} \leq 2$ or~$a_1/a_2 = e^{2\pi i \frac{\zeta}{\mu}}$ for some~$\zeta \in [0\twodots\mu-1]$\textup,~$b = -a_1-a_2$\textup, and~$\#S_{a,b} = \mu$ in this case.
\end{Le}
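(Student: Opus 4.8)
The plan is to count solutions of $\sum_{j=1}^d a_j(e^{2\pi i\zeta_j/\mu}-1)=b$ by fixing all but two coordinates and treating the remaining equation in two variables as an intersection of two circles. Write $w_j = e^{2\pi i\zeta_j/\mu}$, so each $w_j$ ranges over the $\mu$-th roots of unity $\mathbb{U}_\mu$, and set $c = b + \sum_j a_j$, so the equation reads $\sum_{j=1}^d a_j w_j = c$. For the case $d\geq 3$: fix $\zeta_3,\ldots,\zeta_d$ arbitrarily (at most $\mu^{d-2}$ choices) and put $c' = c - \sum_{j\geq 3} a_j w_j$; we must bound the number of pairs $(w_1,w_2)\in \mathbb{U}_\mu^2$ with $a_1 w_1 + a_2 w_2 = c'$. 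Since $w_1 = (c' - a_2 w_2)/a_1$ and we also need $|w_1| = 1$, the point $w_2$ lies on the unit circle \emph{and} on the circle $|c' - a_2 w_2| = |a_1|$, i.e. a circle of radius $|a_1/a_2|$ centered at $c'/a_2$. Two distinct circles meet in at most two points, so there are at most $2$ admissible $w_2$ unless the two circles coincide; coincidence forces $|c'/a_2| = 0$ and $|a_1/a_2| = 1$, and for each fixed tail this is a single linear condition on $(\zeta_3,\ldots,\zeta_d)$ which, when $d\geq 3$, is satisfied by at most $O(\mu^{d-3})$ tails — but even crudely, allowing the bad circle-coincidence case to contribute the trivial bound $\mu$ solutions in $(w_1,w_2)$ for those $O(\mu^{d-3})$ tails still gives a total of $2\mu^{d-2} + O(\mu^{d-2})\lesssim \mu^{d-2}$. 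This establishes the first claim.

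For the case $d=2$ the equation is exactly $a_1 w_1 + a_2 w_2 = c$ with $w_1,w_2\in\mathbb{U}_\mu$. By the circle-intersection argument just described, either there are at most $2$ solutions, or the two circles coincide, which happens precisely when $c = 0$ and $|a_1| = |a_2|$. In the degenerate case $c=0$ we have $a_1 w_1 = -a_2 w_2$, so $w_2/w_1 = -a_1/a_2$ is forced to be a fixed unit-modulus number; it is then a root of unity in $\mathbb{U}_\mu$ (otherwise there are no solutions at all), say $-a_1/a_2 = e^{2\pi i\eta/\mu}$, equivalently $a_1/a_2 = e^{2\pi i\zeta/\mu}$ for the appropriate residue $\zeta$ (shifting $\eta$ by $\mu/2$ when $\mu$ is even, and noting the statement as given uses the normalization $b = -a_1 - a_2$, i.e. $c=0$, consistently). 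In that situation $w_1$ is free over all of $\mathbb{U}_\mu$ and $w_2$ is determined, giving exactly $\mu$ solutions. This matches the stated dichotomy.

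The only genuinely delicate point is the bookkeeping in the coincident-circle case for $d\geq 3$: one must be sure that the tails $(\zeta_3,\ldots,\zeta_d)$ producing a degenerate two-variable slice are few enough that even the trivial bound $\mu$ on that slice does not overshoot $\mu^{d-2}$. As noted, degeneracy requires both $c - \sum_{j\geq 3} a_j w_j = 0$ and $|a_1| = |a_2|$; the latter is a condition on the coefficients alone (either always true or always false), and the former, $\sum_{j\geq 3} a_j w_j = c$, is itself an equation of the same shape in $d-2$ variables, so by induction it has $O(\mu^{d-4})$ solutions when $d-2\geq 3$ and $O(\mu)$ (indeed $O(1)$ or exactly $\mu$) when $d-2 = 2$; in every case the count of bad tails times $\mu$ is $O(\mu^{d-2})$. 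One should also handle the trivial sub-cases where $a_1/a_2$ is \emph{not} a root of unity (no degeneracy possible, giving the clean bound $\le 2$ per slice) separately, but these only help. Everything else is the routine observation that two circles in the plane intersect in at most two points unless equal, together with the identification of when a fixed complex number of modulus one is a $\mu$-th root of unity.
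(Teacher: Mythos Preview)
Your argument is correct and follows the same core idea as the paper: the two-variable equation is an intersection of two circles, hence has at most two solutions unless the circles coincide, and higher $d$ is handled by slicing. The only real difference is the direction of the slicing. The paper peels off one variable at a time: for $d=3$ it fixes $\zeta_1$, observes that the induced two-variable target $b' = b - a_1(e^{2\pi i\zeta_1/\mu}-1)$ is injective in $\zeta_1$ (since $a_1\ne 0$), so at most one $\zeta_1$ produces the degenerate slice; this gives $\#S_{a,b}\le 2(\mu-1)+\mu\le 3\mu$, and then straight induction $d\to d-1$ yields $3\mu^{d-2}$. You instead fix all but two coordinates at once and then need a separate inductive bound on the number of ``bad'' tails satisfying $\sum_{j\ge 3}a_jw_j=c$. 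Both routes arrive at $O(\mu^{d-2})$, but the paper's one-at-a-time slicing avoids the nested induction and gives the explicit constant $3$ with no extra bookkeeping.

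One small point: your derivation in the degenerate $d=2$ case gives $-a_1/a_2\in\mathbb{U}_\mu$, which is not literally the same as the lemma's stated condition $a_1/a_2\in\mathbb{U}_\mu$ when $\mu$ is odd. Your parenthetical about shifting by $\mu/2$ does not resolve this for odd $\mu$; however, this is an imprecision in the lemma's phrasing rather than in your reasoning, and in the paper's only application (where $b=0$, forcing $a_1=-a_2$) the two conditions coincide.
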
 
\begin{proof}
Let us study the structure of the sets~$S_{a,b}$ in the case~$d=2$ first.

We rewrite the equation defining~$S_{a,b}$ in the form
\eq{
e^{\frac{2\pi i \zeta_1}{\mu}} = -\frac{a_2}{a_1}e^{\frac{2\pi i \zeta_2}{\mu}} + \frac{a_2 + b + a_1}{a_1}.
}
The left hand and right hand sides define two circles in the complex plane. In general, such two circles intersect by at most two points, which leads to the estimate~$\#S_{a,b}\leq 2$. However, there is one exception: the two circles might coincide; in such a case, we cannot hope for anything better than~$\#S_{a,b}\leq \mu$ (this estimate is definitely true). If the circles coincide, then~$|a_1| = |a_2|$ and~$b = -a_1-a_2$. Since we search for solutions~$\zeta_1,\zeta_2 \in [0\twodots\mu-1]$, we also need~$a_1/a_2 = e^{2\pi i \frac{\zeta}{\mu}}$, where~$\zeta \in [0\twodots\mu-1]$, to have~$\#S_{a,b} = \mu$. We see that for any pair of non-zero numbers~$(a_1,a_2)$ there exists at most one~$b$ such that~$\#S_{a,b} > 2$.

Now let us pass to the case~$d=3$ and prove that in this case~$\#S_{a,b} \leq 3\mu$. We consider the sets
\eq{
S_{a,b,\zeta_1} = \Set{(\zeta_2,\zeta_3)\in (\ZZ_\mu)^2}{a_2(e^{2\pi i\frac{\zeta_{2}}{\mu}}-1) + a_3(e^{2\pi i\frac{\zeta_{3}}{\mu}}-1) = b - a_1(e^{2\pi i\frac{\zeta_{1}}{\mu}}-1)}.
} 
We note that since~$a_1 \ne 0$,~$\#S_{a,b,\zeta_1} \leq 2$ for all but at most one values of~$\zeta_1$ (for this special value, we have~$\#S_{a,b,\zeta_1} \leq \mu$). Summing over all~$\zeta_1$, we get~$\#S_{a,b}\leq 3\mu$.

Using induction and a similar 'slicing' trick, we prove that~$\#S_{a,b}\leq 3 \mu^{d-2}$ for all larger~$d$. 
\end{proof}
\begin{proof}[Proof of Lemma~\ref{DivFibers}]
Now we allow some of the numbers~$a_j$ to be equal zero. If~$d-1$ of them vanish, we have~\eqref{Ex1}. If~$d-2$ of them vanish, let~$a_{j_1}$ and~$a_{j_2}$ be non-zero. Then,
\eq{
\#\Od^{-1}(a)\leq \mu^{d-2}\#S_{(a_{j_1}, a_{j_2}),0}.
}
If~$a_{j_1}\ne -a_{j_{2}}$, then~$\#S_{(a_{j_1}, a_{j_2}),0} \leq 2$ by Lemma~\ref{Lemma710}. If~$a_{j_1} = -a_{j_2}$, then we arrive at~\eqref{Ex2}. Finally, if~$d-k$ of the coordinates of~$a$ vanish,~$k > 2$, then
\eq{
\#\Od^{-1}(a)\lesssim \mu^{d-k} \mu^{k-2} \leq \mu^{d-2}
} 
by Lemma~\ref{Lemma710}.
\end{proof}
\begin{Rem}
We have proved that apart from the exceptions\textup,~$\#\Od^{-1}(a)\leq 3\mu^{d-2}$\textup, which is less than~$\mu^{d-1}$ if~$\mu \geq 4$.
\end{Rem}
\begin{Cor}
For~$\mu\geq 4$\textup, the space~$\Wd$ is geometric of order~$(d-1)/d$.
\end{Cor}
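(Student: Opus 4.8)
The plan is to derive this as an immediate consequence of Lemma~\ref{FourierLemma}, applied with $l=d$ and $k=d-1$, feeding it the description of the fibers $\Od^{-1}(a)$ furnished by Lemma~\ref{DivFibers}. First I would recall that $\Wd$ is translation invariant with associated subspaces $\Od(\gamma)$, so that Lemma~\ref{FourierLemma} is applicable once its two hypotheses are checked.

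To verify the bound~\eqref{AntiSymmetry} with $k=d-1$, I would fix $a\in\C^d\setminus\{0\}$ and split according to the trichotomy of Lemma~\ref{DivFibers}. If $a$ is not of the exceptional types~\eqref{Ex1} or~\eqref{Ex2}, then $\#\Od^{-1}(a)\leq 3\mu^{d-2}$ (as the proof of Lemma~\ref{DivFibers} shows), and since $\mu\geq 4$ this is strictly smaller than $\mu^{d-1}$; hence $\#\big(\Od^{-1}(a)\cap(-\Od^{-1}(a))\big)\leq\#\Od^{-1}(a)\leq\mu^{d-1}-1$. If $a$ is of type~\eqref{Ex1} or~\eqref{Ex2}, then $\Od^{-1}(a)$ is, up to removal of the origin, a subgroup of $(\ZZ_\mu)^d$ cut out by a single linear equation, so it has exactly $\mu^{d-1}-1$ elements and is invariant under $\gamma\mapsto-\gamma$; thus $\#\big(\Od^{-1}(a)\cap(-\Od^{-1}(a))\big)=\mu^{d-1}-1$. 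Consequently~\eqref{AntiSymmetry} holds for every $a$, with equality precisely in the exceptional cases.

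It then remains to exhibit the equality configuration demanded by Lemma~\ref{FourierLemma}. I would take $a=e_j$, which is of type~\eqref{Ex1}: then $\big(\Od^{-1}(a)\cap(-\Od^{-1}(a))\big)\cup\{0\}=\{\gamma\in(\ZZ_\mu)^d:\gamma_j=0\}=:\Gamma$ is a subgroup of $(\ZZ_\mu)^d$ of index $1=d-k$ (that is, of cardinality $\mu^{d-1}$), and $\Od^{-1}(a)=\Gamma\setminus\{0\}$, exactly as required (cf.\ Remark~\ref{DivergenceExtremalVectors}). Lemma~\ref{FourierLemma} then yields that $\Wd$ is a geometric space of order $k/d=(d-1)/d$.

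There is no real obstacle here beyond bookkeeping; the only step that genuinely uses the hypothesis $\mu\geq 4$ is the numerical inequality $3\mu^{d-2}<\mu^{d-1}$, which keeps the non-exceptional fibers below the threshold $\mu^{d-1}-1$, and everything else is supplied verbatim by Lemma~\ref{DivFibers} and Remark~\ref{DivergenceExtremalVectors}.
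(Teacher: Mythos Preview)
Your proposal is correct and follows exactly the paper's approach: apply Lemma~\ref{FourierLemma} with $k=d-1$, using Lemma~\ref{DivFibers} (and the remark after it giving the explicit constant $3$) to verify the cardinality bound~\eqref{AntiSymmetry}, and noting that the exceptional sets~\eqref{Ex1},~\eqref{Ex2} are subgroups of the right size furnishing the equality case. The paper's own proof is the one-line version of what you wrote.
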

\begin{proof}
We note that the sets~\eqref{Ex1} and~\eqref{Ex2} are subgroups of index~$1$ and apply Lemma~\ref{FourierLemma}.
\end{proof}
Now we pass to verification of the non-locality condition given in Definition~\ref{Non-locality}. 
\begin{Le}\label{NonLocalCoset}
Let~$k \in [0\twodots d]$. Assume that~$W$ is a translation invariant subspace of~$V\otimes \C^l$ such that
\eq{
\#\big(\Omega^{-1}(a)\cap(-\Omega^{-1}(a))\big) \leq \mu^{k}-1
} 
for any~$a\in \C^l\setminus \{0\}$. Assume also that if~$\Omega^{-1}(a) = \Gamma\setminus \{0\}$\textup, where~$\Gamma$ is a subgroup of index~$d-k$ in~$(\ZZ_\mu)^d$\textup, then
\alg{
&\label{ParallelToa}\bigcap\limits_{\gamma \in \Gamma}\Omega(\gamma) = \C a\  \quad\text{and}\\
&\label{parallelto0}\bigcap\limits_{\gamma \in S}\Omega(\gamma) = \{0\}
} 
for any coset~$S\ne \Gamma$ of~$\Gamma$.
Then\textup,~$W$ is a non-local space of order~$k/d$.
\end{Le}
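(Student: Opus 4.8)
The plan is to transfer everything to the Fourier side and exploit the translation invariance encoded by the spaces $\Omega(\gamma)$. First I fix an extremal vector $v\otimes a$ of $W$ supported on $H\subset[1\twodots m]$ and set $\alpha=k/d$. By Remark~\ref{SupportsOfExtremalVectors} (and the computation in the proof of Lemma~\ref{FourierLemma}) there is a subgroup $\Gamma$ of $(\ZZ_\mu)^d$ of order $\mu^k$ with $\Omega^{-1}(a)=\Gamma\setminus\{0\}$, the support $H$ is a coset of the annihilator $\Gamma^\perp$ (so $\#H=\#\Gamma^\perp=m^{1-\alpha}$), and $v=m^{\alpha}\chi_H-{\bf 1}_{[1..m]}$, i.e. $v_j=m^\alpha-1$ on $H$ and $v_j=-1$ off $H$. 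Given $w\in W$ and $b\in\C^l$ with $b+w_j=0$ for all $j\notin H$, I then pass to the auxiliary $\C^l$-valued function $u=w+b\,{\bf 1}_{[1..m]}$ on $(\ZZ_\mu)^d$: by the choice of $b$ it is supported on $H$, and, since a constant only affects the zero frequency, $\hat u(\gamma)=\hat w(\gamma)\in\Omega(\gamma)$ for every $\gamma\ne 0$. The goal then is to show that $u$ is a scalar multiple of $\chi_H\otimes a$, for this will force $w$ to be proportional to $v\otimes a$.

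The core step is to read the spatial condition $\supp u\subset H$ off the Fourier transform $\hat u$. Writing $H=x_0+\Gamma^\perp$ and using that, for $\gamma$ ranging over a fixed coset $S$ of $\Gamma$ and $y\in\Gamma^\perp$, the residue $\gamma\cdot y\bmod\mu$ depends only on $S$, I obtain $\hat u(\gamma)=e^{-2\pi i\gamma\cdot x_0/\mu}\,c(S)$ for $\gamma\in S$, with one vector $c(S)\in\C^l$ attached to each coset $S$. Hence along each coset the values of $\hat u$ are unimodular multiples of $c(S)$, and combining this with $\hat u(\gamma)\in\Omega(\gamma)$ yields $c(S)\in\bigcap_{\gamma\in S}\Omega(\gamma)$ for every coset $S$ avoiding $0$, and $c(\Gamma)\in\bigcap_{\gamma\in\Gamma\setminus\{0\}}\Omega(\gamma)$. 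Now the two hypotheses are exactly tailored to this situation: \eqref{parallelto0} kills $c(S)$ for every $S\ne\Gamma$, so $\hat u$ is supported on $\Gamma$; and \eqref{ParallelToa} forces $c(\Gamma)\in\C a$, so $\hat u$ is $\C a$-valued on all of $\Gamma$. Therefore $u=\phi\otimes a$ for a scalar function $\phi$ whose spectrum lies in $\Gamma$, i.e. $\phi$ is constant on cosets of $\Gamma^\perp$; since $\phi$ is supported on the single coset $H$, it must be a constant multiple of $\chi_H$, so $u=c\,(\chi_H\otimes a)$ for some $c\in\C$.

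It then remains to unwind $w=u-b\,{\bf 1}_{[1..m]}$: using that $w\in W$ has mean zero, summing $w_j$ over $j$ (and $\#H\cdot\#\Gamma=m$) pins down $b=\tfrac{c}{m^{\alpha}}\,a$, whence $w=\tfrac{c}{m^{\alpha}}\big((m^{\alpha}\chi_H-{\bf 1}_{[1..m]})\otimes a\big)=\tfrac{c}{m^{\alpha}}\,(v\otimes a)$, which is precisely the proportionality demanded by Definition~\ref{Non-locality}, so $W$ is non-local of order $k/d$. I expect the only genuinely delicate point to be the middle step --- converting $\supp u\subset H$ into the phase rigidity of $\hat u$ along the cosets of $\Gamma$ --- together with carefully bookkeeping the zero frequency, which is not seen by the condition $w\in W$ and enters only through the mean-zero relation; everything else is routine, the real content being that \eqref{ParallelToa} and \eqref{parallelto0} are exactly the spectral reformulation of non-locality.
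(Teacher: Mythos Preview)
Your argument is correct and follows essentially the same route as the paper: pass to the Fourier side, use that a function supported on a coset of $\Gamma^\perp$ has Fourier transform which is, along each coset $S$ of $\Gamma$, a unimodular multiple of a single vector, and then apply~\eqref{parallelto0} and~\eqref{ParallelToa} to force this vector to be $0$ on $S\ne\Gamma$ and parallel to $a$ on $\Gamma$. The only cosmetic difference is that the paper first invokes translation invariance to reduce to $H=\Gamma^\perp$ (so the phase factor $e^{-2\pi i\gamma\cdot x_0/\mu}$ disappears and $\hat g$ is literally constant on cosets), whereas you keep the general coset and track the phase; your bookkeeping of the zero frequency and the final computation of the proportionality constant are more explicit than the paper's, but not substantively different.
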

\begin{proof}
By Remark~\ref{SupportsOfExtremalVectors} and translation invariance, it suffices to prove that if the function~$c+f$, where~$c$ is a constant and~$f\in W$, is supported in~$\Gamma^\perp$ for some subgroup~$\Gamma$ of index~$d-k$,~$\Gamma = \Omega^{-1}(a)\cup \{0\}$, then~$c+f$ is proportional to~$\chi_{\Gamma^\perp}\otimes a$. If~$g = c+f$ is supported in~$\Gamma^\perp$, then its Fourier transform is constant on each coset of~$\Gamma$. In particular, for any such coset~$S$ and any~$\zeta \in S$,
\eq{
\hat{g}(\zeta) \in \bigcap_{\gamma \in S}\Omega(\gamma).
}
By~\eqref{parallelto0},~$\hat{g}$ is supported in~$\Gamma$ and, by~\eqref{ParallelToa},~$\hat{g}$ is proportional to~$\chi_\Gamma\otimes a$. Therefore,~$g$ is proportional to~$\chi_{\Gamma^\perp}\otimes a$.
\end{proof}
\begin{Cor}
Let~$\mu \geq 4$. The space~$\Wg$ is non-local of order~$1/d$ and the space~$\Wd$ is non-local of order~$(d-1)/d$.
\end{Cor}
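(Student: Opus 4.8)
The plan is to deduce both non-locality claims from Lemma~\ref{NonLocalCoset}. First I would recall that, by the corollary following Lemma~\ref{WgLemma} and the corollary following Lemma~\ref{DivFibers}, the spaces $\Wg$ and $\Wd$ are geometric of orders $1/d$ and $(d-1)/d$, so the counting hypothesis $\#\big(\Omega^{-1}(a)\cap(-\Omega^{-1}(a))\big)\le\mu^{k}-1$ of Lemma~\ref{NonLocalCoset} holds with $k=1$, resp.\ $k=d-1$. Moreover, when equality is attained, the relevant subgroup $\Gamma$ (the one with $\Omega^{-1}(a)=\Gamma\setminus\{0\}$) is, by Lemma~\ref{WgLemma} and Remark~\ref{GradientExtremalVectors}, a combinatorial line $\ell_D$ for $W=\Wg$ (with $a$ a multiple of $\sum_{j\in D}e_j$), and, by Lemma~\ref{DivFibers} and Remark~\ref{DivergenceExtremalVectors}, one of the two subgroups from~\eqref{Ex1}, \eqref{Ex2} for $W=\Wd$ (with $a$ a multiple of $e_j$, resp.\ of $e_{j_1}-e_{j_2}$). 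Thus everything reduces to verifying the two spectral conditions~\eqref{ParallelToa} and~\eqref{parallelto0} for these $\Gamma$.

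Condition~\eqref{ParallelToa} I would check by a direct computation with the explicit descriptions of $\Omega(\gamma)$. For $\Wg$: if $\gamma\in\ell_D$, then by~\eqref{OmegaGrad} the generating vector of $\Og(\gamma)$ equals $(e^{2\pi ib/\mu}-1)\sum_{j\in D}e_j$ for the corresponding $b\in\ZZ_\mu$, hence $\Og(\gamma)=\C\sum_{j\in D}e_j=\C a$ for every $\gamma\in\ell_D\setminus\{0\}$, and the intersection is $\C a$. For $\Wd$ it is convenient to write $\Od(\gamma)$ as the bilinear annihilator of $n(\gamma):=(e^{2\pi i\gamma_1/\mu}-1,\dots,e^{2\pi i\gamma_d/\mu}-1)$, so that $\bigcap_{\gamma\in S}\Od(\gamma)=\big(\operatorname{span}\{n(\gamma):\gamma\in S\}\big)^{\perp}$; one then checks that for $\Gamma$ as in~\eqref{Ex1} the vectors $n(\gamma)$, $\gamma\in\Gamma$, all have vanishing $j$-th coordinate yet span the hyperplane $\{\zeta_j=0\}=(\C e_j)^{\perp}$ (take $\gamma=e_i$, $i\ne j$), and for $\Gamma$ as in~\eqref{Ex2} they all satisfy $\zeta_{j_1}=\zeta_{j_2}$ and span that hyperplane (take $\gamma=e_i$, $i\notin\{j_1,j_2\}$, and $\gamma=e_{j_1}+e_{j_2}$); in both cases this gives $\bigcap_{\gamma\in\Gamma}\Od(\gamma)=\C a$.

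Condition~\eqref{parallelto0} is the heart of the matter. Fixing a coset $S\ne\Gamma$, one must show that the vectors $n(\gamma)$, $\gamma\in S$, are not all proportional over $\C$ (for $\Wg$), resp.\ span all of $\C^{d}$ (for $\Wd$). The key tool is the elementary fact, already used in the proof of Lemma~\ref{WgLemma}, that for $\alpha\ne1$ the equation $e^{ix}-1=\alpha(e^{iy}-1)$ has at most one solution $(x,y)\in(0,2\pi)^{2}$, combined with the structure of a coset: along $S$ the coordinates of $\gamma$ outside the ``moving'' directions of $\Gamma$ are frozen at values that, because $S\ne\Gamma$, are not all compatible with membership in $\Gamma$, so the corresponding coordinates of $n(\gamma)$ do not all vanish. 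For $\Wd$ this lets one write down explicit elements $\gamma\in S$ — supported at the frozen coordinate(s), and those plus a single $e_i$ — whose $n(\gamma)$ already span $\C^{d}$. For $\Wg$ one first uses the uniqueness statement on a frozen coordinate to reduce to the case where all frozen coordinates of $\gamma$ vanish and the shift inside $D$ is by a non-constant vector, and then applies it once more, comparing two coordinates inside $D$, to produce two non-proportional $n(\gamma)$. The place requiring real care — and the only genuine obstacle — is this degenerate bookkeeping: tracking the frequencies $\gamma$ at which some $e^{2\pi i\gamma_j/\mu}-1$ vanishes, and excluding the ``hidden'' principal cosets whose moving part lands back in $\Gamma$. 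Here the hypothesis $\mu\ge4$, inherited from the geometricity statements (which already required $3\mu^{d-2}<\mu^{d-1}$), is comfortably available, and the conclusion follows from Lemma~\ref{NonLocalCoset}.
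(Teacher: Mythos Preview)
Your plan is correct and follows the same overall route as the paper: reduce everything to Lemma~\ref{NonLocalCoset} and verify its two spectral hypotheses \eqref{ParallelToa} and \eqref{parallelto0} for the relevant subgroups~$\Gamma$. Where you diverge is in the verification of \eqref{parallelto0} (and, to a lesser extent, \eqref{ParallelToa}). You argue constructively, exhibiting explicit frequencies $\gamma\in S$ whose normals $n(\gamma)$ are non-proportional (for $\Wg$) or span $\C^d$ (for $\Wd$); this works, and your case analysis for $\Wg$---first handling a nonzero frozen coordinate, then two unequal coordinates inside $D$---is sound. The paper, by contrast, has in mind the one-line pigeonhole argument that the cited lemmas already encode: if some nonzero $b$ lay in $\bigcap_{\gamma\in S}\Omega(\gamma)$ then the full coset $S$ would sit inside $\Omega^{-1}(b)$, but Lemma~\ref{WgLemma} gives $\#\Og^{-1}(b)\le \mu-1<\mu=\#S$, and Lemma~\ref{DivFibers} (with $\mu\ge 4$) gives $\#\Od^{-1}(b)\le\max(3\mu^{d-2},\mu^{d-1}-1)<\mu^{d-1}=\#S$; the same counting, applied with $\Gamma\setminus\{0\}$ in place of $S$, also forces $b\parallel a$ in \eqref{ParallelToa}. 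Your approach buys explicit witnesses and avoids re-using the (slightly delicate) cardinality bounds, at the cost of more bookkeeping; the paper's approach is shorter because all the work has already been absorbed into the two structural lemmas.
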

\begin{proof}
We need to verify the assumptions of Lemma~\ref{NonLocalCoset} for the cases~$\Wg$ and~$\Wd$. The first case follows easily from Lemma~\ref{WgLemma}, the second case follows from Lemma~\ref{DivFibers}.
\end{proof}

Let now~$\varphi \colon W \to V$ be an operator that commutes with translations. We may extend it to an operator mapping~$\R^m\otimes \C^l$ to~$V$ and then average over translations. The resulting operator will be translation invariant. In particular, there exists a function~$M\colon (\ZZ_\mu)^d\to \C^l$ such that
\eq{\label{Mgamma}
\mathcal{F}[\varphi[f]](\gamma) = \scalprod{M(\gamma)}{\hat{f}(\gamma)},\qquad \gamma \in (\ZZ_\mu)^d,\ f \in W,
}
the symbol~$\mathcal{F}$ means the Fourier transform. Here we use scalar product in~$\C^l$. Without loss of generality, we may assume~$M(\gamma) \in \Omega(\gamma)$ (in case this relation does not hold for some~$\gamma$, we may redefine~$M(\gamma)$ to be the orthogonal projection of~$M(\gamma)$ onto~$\Omega(\gamma)$; this will not change the formula~\eqref{Mgamma}). In particular,~$M(0) = 0$. Using the inverse Fourier transform, we have
\eq{\label{Convolution}
\varphi[f](x) = \sum\limits_{y\in(\ZZ_\mu)^d}\scalprod{K(x-y)}{f(y)},\qquad x\in (\ZZ_\mu)^d;
}
here~$K = m\check{M}$. Assume now that~$W$ satisfies the assumptions of Lemma~\ref{FourierLemma}. Let~$v\otimes a$ be an extremal vector. Then, by translation invariance and the condition~$\sum_x K(x) = 0$, the cancellation condition from Definition~\ref{CancelingOp} is equivalent to
\eq{
0=\varphi[v\otimes a](0) = \mu^\alpha\sum\limits_{y\in H} \scalprod{K(y)}{a},
}
here we assume~$v\otimes a$ is supported by a subgroup~$H$ (see Remark~\ref{SupportsOfExtremalVectors}). In the case~$W = \Wg$, this reads as follows: 
\eq{\label{CancellationGrad}
\forall \ D\subset [1\twodots d]\qquad \sum\limits_{x\colon\!\! \sum\limits_{j\in D} x_j = 0} \scalprod{K(x)}{e_D} = 0, \qquad e_D=\sum\limits_{j\in D}e_j.
}
\begin{Th}\label{GraadTh}
Let~$W$ be given by~\eqref{Wg}. Let the operator~$\varphi$ be defined by~\eqref{Convolution}. Then\textup, the inequality~\eqref{BilinearTrace} holds true if and only if the operator~$\varphi$ fulfills the cancellation condition~\eqref{CancellationGrad}.
\end{Th}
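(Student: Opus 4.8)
The plan is to deduce Theorem~\ref{GraadTh} from the general endpoint result Theorem~\ref{Main} together with the group-theoretic analysis of $\Wg$ carried out above; both implications reduce to facts already in hand, so the work is one of assembly rather than of new estimates.

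\emph{The implication that~\eqref{CancellationGrad} implies~\eqref{BilinearTrace}.} Assume $\varphi$, given by~\eqref{Convolution}, satisfies~\eqref{CancellationGrad}. First note that $\varphi$ does map $\Wg$ into $V$: since $M(0)=0$, the function $\varphi[f]$ has mean zero. Now recall that $\Wg$ is translation invariant with Fourier data~\eqref{OmegaGrad}; by Lemma~\ref{WgLemma}, the sets $(\Og^{-1}(a)\cap(-\Og^{-1}(a)))\cup\{0\}$ are combinatorial lines, i.e.\ subgroups of index $d-1$. Lemma~\ref{FourierLemma} then shows that $\Wg$ is geometric of order $1/d$, so $-\kappa'(1)/\log m = 1/d$, and Remarks~\ref{SupportsOfExtremalVectors} and~\ref{GradientExtremalVectors} show that every extremal vector of $\Wg$ is supported on a coset of $\ell_D^\perp = \Set{x\in[0\twodots\mu-1]^d}{\sum_{j\in D}x_j=0}$, with associated direction a multiple of $e_D=\sum_{j\in D}e_j$, for some $D\subset[1\twodots d]$. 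Next, the hypotheses of Lemma~\ref{NonLocalCoset} hold for $\Wg$---this is the content of the corollary following that lemma, which for $\Wg$ rests only on Lemma~\ref{WgLemma}---so $\Wg$ is non-local of order $1/d$. Finally, the computation immediately preceding the statement shows that for a translation-invariant $\varphi$ with kernel $K$ the abstract cancellation property of Definition~\ref{CancelingOp} is equivalent to $\sum_{x\colon\sum_{j\in D}x_j=0}\scalprod{K(x)}{e_D}=0$ for every $D\subset[1\twodots d]$, i.e.\ to~\eqref{CancellationGrad}. Thus $\Wg$ is geometric and non-local and $\varphi$ is a canceling operator, so Theorem~\ref{Main} applies and yields~\eqref{BilinearTrace} at $\alpha=1/d$.

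\emph{The converse.} If~\eqref{BilinearTrace} holds at the endpoint $\alpha=1/d$, then, $\Wg$ being geometric, the Proposition immediately following Definition~\ref{CancelingOp} forces $\varphi$ to be canceling, which---by the equivalence just recalled---is exactly~\eqref{CancellationGrad}.

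\emph{Where the difficulty lies.} I expect no substantial obstacle here: the analytic core has been absorbed into Theorem~\ref{Main}, i.e.\ into the supersolution $\Phi$ of Theorem~\ref{Bellster} and the verification of its main inequality, while the combinatorial input is the classification of the extremal vectors of $\Wg$ via Lemma~\ref{WgLemma}. The only point needing care is that Definition~\ref{CancelingOp} is a priori a family of scalar identities ranging over \emph{all} extremal vectors and \emph{all} points of their supports; one has to see that translation invariance of $\varphi$ collapses the condition along a whole coset-support to a single scalar identity per support, and that non-locality of $\Wg$ forces the direction $a$ attached to the support $\ell_D^\perp$ to be a multiple of $e_D$, so that the family becomes the finite list indexed by $D\subset[1\twodots d]$ appearing in~\eqref{CancellationGrad}.
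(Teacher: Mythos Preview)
Your proposal is correct and matches the paper's approach: the paper does not give a separate proof of Theorem~\ref{GraadTh} but rather arranges the preceding lemmas (Lemma~\ref{WgLemma} and its corollary for geometricity, Lemma~\ref{NonLocalCoset} and its corollary for non-locality, the reduction of Definition~\ref{CancelingOp} to~\eqref{CancellationGrad} just before the statement) so that the theorem follows immediately from Theorem~\ref{Main} and the Proposition after Definition~\ref{CancelingOp}. Your assembly of these ingredients is exactly what is intended.
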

In the case~$W = \Wd$, the cancellation condition turns into:
\alg{
&\label{CancellationDiv1}\forall \ j \in [1\twodots d]&\qquad &\sum\limits_{y\in \ZZ_\mu} K_j(y e_j) = 0;\\
&\label{CancellationDiv2}\forall \ i\ne j \in [1\twodots d]& \qquad &\sum\limits_{y\in \ZZ_\mu} K_j(y(e_j - e_i)) - K_{i}(y(e_j - e_i)) = 0.
}
\begin{Th}\label{DivTh}
Let~$W$ be given by~\eqref{Wd}. Let the operator~$\varphi$ be defined by~\eqref{Convolution}. Then\textup, the inequality~\eqref{BilinearTrace} holds true if and only if the operator~$\varphi$ fulfills the cancellation conditions~\eqref{CancellationDiv1} and~\eqref{CancellationDiv2}.
\end{Th}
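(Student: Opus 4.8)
The plan is to obtain both implications from the abstract endpoint results of Section~\ref{S4}, so that the only piece of genuinely new work is translating the canceling condition of Definition~\ref{CancelingOp} into the concrete convolution identities \eqref{CancellationDiv1} and \eqref{CancellationDiv2}. Throughout I assume $\mu\geq 4$, which is what the corollaries to Lemma~\ref{DivFibers} and Lemma~\ref{NonLocalCoset} require: together they say that $\Wd$ is a geometric, non-local space of order $(d-1)/d$, so its critical exponent is $\alpha=-\kappa'(1)/\log m=(d-1)/d$ and $m^{1-\alpha}=\mu$. Thus Theorem~\ref{Main} applies to $W=\Wd$ and, $\Wd$ being geometric, so does the necessity proposition stated immediately before Theorem~\ref{Main}. (Here and below \eqref{BilinearTrace} refers to this endpoint exponent; for larger $\alpha$ it holds for every $\varphi$ by Theorem~\ref{TraceTheorem}, so the ``only if'' is meaningful only at the endpoint.)

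The first step is to describe the extremal vectors of $\Wd$. By Lemma~\ref{DivFibers}, for $a\in\C^d\setminus\{0\}$ the set $\Od^{-1}(a)$ either has at most $3\mu^{d-2}$ elements --- which is strictly below $\mu^{d-1}$ once $\mu\geq 4$ --- or equals one of the two symmetric subgroups in \eqref{Ex1}, \eqref{Ex2}, each of order $\mu^{d-1}$. Since by Remark~\ref{SupportsOfExtremalVectors} an $a$ produces an extremal vector precisely when $\big(\Od^{-1}(a)\cap(-\Od^{-1}(a))\big)\cup\{0\}$ is a subgroup of order $\mu^{d-1}$, only the two exceptional cases survive --- this is exactly where $\mu\geq 4$ is used. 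As in Remark~\ref{DivergenceExtremalVectors}, the extremal vectors $v\otimes a$ therefore fall into two families: (i) $a$ a multiple of a basis vector $e_j$, with $\supp(v\otimes a)$ a coset of the cyclic group $\ZZ_\mu e_j$; and (ii) $a$ a multiple of $e_{j_1}-e_{j_2}$ for some $j_1\neq j_2$, with $\supp(v\otimes a)$ a coset of the cyclic group generated by $e_{j_1}-e_{j_2}$. (The support is the annihilator of $\Gamma=\Od^{-1}(a)$ in $(\ZZ_\mu)^d$; a direct count shows this annihilator has order $\mu=m^{1-\alpha}$ in both cases, which is the point that needs some care.)

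The second step, the only calculation, is to convert ``$\varphi$ canceling'' into \eqref{CancellationDiv1} and \eqref{CancellationDiv2}. Write $\varphi[f](x)=\sum_{y}\scalprod{K(x-y)}{f(y)}$ with $\sum_xK(x)=0$, and let $v\otimes a$ be an extremal vector which, after a translation (harmless since $\varphi$ commutes with translations), is supported on a subgroup $H$. The computation preceding Theorem~\ref{GraadTh} shows that $\varphi[v\otimes a]$ equals on $H$ a fixed nonzero multiple of $\sum_{y\in H}\scalprod{K(y)}{a}$; hence $\varphi$ is canceling if and only if $\sum_{y\in H}\scalprod{K(y)}{a}=0$ for every $(H,a)$ from families (i) and (ii). For (i), with $H=\ZZ_\mu e_j$ and $a=e_j$, this reads $\sum_{y\in\ZZ_\mu}K_j(ye_j)=0$, i.e.\ \eqref{CancellationDiv1}; for (ii), with $H=\{y(e_{j_1}-e_{j_2})\mid y\in\ZZ_\mu\}$ and $a=e_{j_1}-e_{j_2}$, it reads $\sum_{y\in\ZZ_\mu}\big(K_{j_1}(y(e_{j_1}-e_{j_2}))-K_{j_2}(y(e_{j_1}-e_{j_2}))\big)=0$, i.e.\ \eqref{CancellationDiv2}. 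Therefore $\varphi$ is canceling exactly when \eqref{CancellationDiv1} and \eqref{CancellationDiv2} hold.

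Assembling: if \eqref{CancellationDiv1} and \eqref{CancellationDiv2} hold then $\varphi$ is canceling and Theorem~\ref{Main} yields \eqref{BilinearTrace}; conversely, if \eqref{BilinearTrace} holds then, $\Wd$ being geometric, the necessity proposition of Section~\ref{S4} forces $\varphi$ to be canceling, hence \eqref{CancellationDiv1} and \eqref{CancellationDiv2} by the previous step. The substance of the argument lies not here but upstream, in the already-proven Lemma~\ref{DivFibers} --- whose crux is the $d=2$ circle-coincidence dichotomy of Lemma~\ref{Lemma710} --- and in the Bellman supersolution of Theorem~\ref{Bellster}; within the proof of Theorem~\ref{DivTh} itself, the one delicate point is the subgroup-order bookkeeping that pins down the supports of the extremal vectors and makes the role of the hypothesis $\mu\geq 4$ explicit.
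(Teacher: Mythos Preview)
Your proposal is correct and follows essentially the same route as the paper, which does not give a separate proof of Theorem~\ref{DivTh} but leaves it as the immediate consequence of the assembled machinery (the corollaries to Lemmas~\ref{DivFibers} and~\ref{NonLocalCoset}, Remark~\ref{DivergenceExtremalVectors}, Theorem~\ref{Main}, and the necessity proposition of Section~\ref{S4}). Your identification of the support in case~(ii) as the cyclic group $\{y(e_{j_1}-e_{j_2}):y\in\ZZ_\mu\}$ is in fact more precise than the paper's Remark~\ref{DivergenceExtremalVectors}, which writes $\Gamma^\perp=\{x:x_{j_1}+x_{j_2}=0\}$ and omits the constraint $x_k=0$ for $k\notin\{j_1,j_2\}$; your version is the one that matches the actual sum in~\eqref{CancellationDiv2}.
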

Compare with condition~$(1.10)$ in~\cite{RSS2021} and Conjecture~$1.6$ of the same paper. It says that the inequality
\eq{
\|K*f\|_{L_1(\nu)}\lesssim \|f\|_{L_1(\R^d)}, \qquad \mathrm{div}\, f = 0, \quad K*f = \sum\limits_{j=1}^d K_j*f_j,
} 
holds true for all measures~$\nu$ satisfying the ball growth (or Frostman) condition~$\nu(B_r(x))\lesssim r$, if and only if the kernel~$K\colon \R^d \to \R^d$ that is homogeneous of order~$-1$ satisfies the condition
\eq{
\sum\limits_{j=1}^d (K_j(x) + K_j(-x))x_j = 0
}
for any~$x\in \R^d\setminus \{0\}$.

Note that since the kernel~$K = (K_1,K_2,\ldots,K_d)$ is homogeneous of order~$-1$, the latter condition may be informally interpreted that the (divergent) integral~$\int_{\R} \scalprod{K(tx)}{x}\,dt$ vanishes for any~$x$, which is somehow similar to~\eqref{CancellationDiv1} and~\eqref{CancellationDiv2}. In a similar manner, Theorem~\ref{GraadTh} leads to another natural conjecture.
\begin{Conj}
Assume the kernel~$K\colon \R^d\to \R^d$ is homogeneous of order~$1-d$ and smooth outside the origin. The inequality
\eq{
\|K*\nabla f\|_{L_1(\nu)}\lesssim \|\nabla f\|_{L_1(\R^d)}, \qquad f\in C_0^\infty(\R^d),
}
holds true for all measures~$\nu$ satisfying the condition~$\nu(B_r(x))\lesssim r^{d-1}$ if and only if for any~$e \in S^{d-1}$
\eq{
\int\limits_{e^\perp \cap S^{d-1}} \scalprod{K(\zeta)}{e}\,d\zeta = 0.
} 
The integration is performed with respect to the natural Hausdorff measure on the~$(d-2)$-dimensional sphere~$e^{\perp} \cap S^{d-1}$.
\end{Conj}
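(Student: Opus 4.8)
The plan is to mirror the martingale development of Sections~\ref{S2}--\ref{S5}: the conjecture is the Euclidean twin of Theorem~\ref{GraadTh}. First one reduces the shape of the operator. Integrating by parts, $K*\nabla f=(\Div K)*f$, and $\Div K$ is homogeneous of order $-d$ with vanishing mean over spheres (by the divergence theorem, $\int_{r<|x|<r'}\Div K=0$ for all $r<r'$), so $T\colon f\mapsto K*\nabla f$ is a Calder\'on--Zygmund operator and the conjectural inequality is a Meyers--Ziemer estimate $\|Tf\|_{L_1(\nu)}\lesssim\|\nabla f\|_{L_1(\R^d)}$ with a singular integral inserted. Since $K$ has the homogeneity of the Riesz kernel $I_1$, the operator $Tf$ plays the role of $\TT_{1/d}[F]$ on the Sobolev space modelled on $\Wg$ (geometric of order $1/d$), and the equatorial quantity $c_K(e):=\int_{e^\perp\cap S^{d-1}}\scalprod{K(\zeta)}{e}\,d\zeta$ is the continuous shadow of the canceling condition of Definition~\ref{CancelingOp}: in Theorem~\ref{GraadTh} one sums $\scalprod{K(x)}{e_D}$ over the ``equator'' $\{\sum_{j\in D}x_j=0\}$.

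\paragraph{Necessity.} Suppose the inequality holds and fix $e\in S^{d-1}$. With $f_R\in C_0^\infty(\R^d)$ approximating the half-space indicator $\mathbf 1_{\{\scalprod{y}{e}<0\}}$, mollified at a fixed scale across $\{\scalprod{y}{e}=0\}$ and truncated in $B_R$, one has $\|\nabla f_R\|_{L_1(\R^d)}\sim R^{d-1}$, while for $x$ with $\scalprod{x}{e}=t>0$ and $|x|\lesssim t$
\eq{
K*\nabla f_R(x)=-\int_{\{\scalprod{y}{e}=0,\,|y|\le R\}}\scalprod{K(x-y)}{e}\,d\mathcal{H}^{d-1}(y)+O(1);
}
substituting $z=x-y$ and using $K(rz)=r^{1-d}K(z)$ shows that the only non-absolutely-convergent piece, coming from $|y|\in[Ct,R]$, contributes $-c_K(e)\log(R/t)$. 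A nonzero $c_K(e)$ therefore forces $K*\nabla f_R$ to grow logarithmically on the region $\{|x|\lesssim\scalprod{x}{e}\}$, in tension with $\|K*\nabla f_R\|_{L_1(\nu)}\lesssim\|\nabla f_R\|_{L_1}$; to upgrade this heuristic into a genuine failure of the \emph{uniform} estimate one must iterate over a lacunary sequence of scales and pass to the $\BV$ competitors that are the Euclidean avatars of the Riesz-product martingales $F_n=\prod_{j=0}^n(1+h_j)\otimes a$ from the proof of the canceling property above. Since even the divergence version of this direction is only conjectured in~\cite{RSS2021}, I do not expect this half to be routine, although it is the shallower of the two.

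\paragraph{Sufficiency.} This is the substantial part --- the Euclidean analogue of Theorem~\ref{Bellster} --- and the idea is to transplant its supersolution. After scaling we may assume $\nu(B_r(x))\le r^{d-1}$ and, dualizing, it suffices to bound $\int(K*\nabla f)\,g\,d\nu$ with $\|g\|_{L_\infty(\nu)}\le1$. One organizes $\R^d$ into a dyadic tree of cubes adapted to $\nu$ by a Calder\'on--Zygmund/stopping-time decomposition mimicking the atoms $\AF_n$, decomposes $K*\nabla f$ on a cube $Q$ into a local contribution of order $|Q|^{1/d}$ times an average of $\nabla f$ over $Q$ plus a far-field contribution from coarser cubes --- the latter controlled by the cancellation of $K$ exactly as the ``$j\in H$'' terms were annihilated at the end of the proof of Theorem~\ref{Bellster} --- and then runs the Bellman induction with
\eq{
\Phi(x,y,z,t,s)=|y|t+C_1|x|t+C_2|x|\sqrt{st}+C_3(z-|x|)s,
}
where now $x\in\R^d$ tracks an average of the $\nabla f$-data on $Q$, $y$ the running value of $K*\nabla f$, $z$ the $\|\nabla f\|_{L_1}$-mass, and $(t,s)$ the $\nu$-mass of $Q$ and its Frostman envelope. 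Verifying the corresponding ``main inequality'' along a split of $Q$ into dyadic children demands the Euclidean dictionary: flat configurations ($\vec x$ close to rank one, Lemma~\ref{FlatRankOne}) correspond to ``$\nabla f$ essentially one-directional on $Q$'', i.e.\ $f$ whose level sets are locally hyperplanes orthogonal to some $e\in S^{d-1}$; extremal vectors correspond to such directions $e$; the canceling hypothesis becomes $c_K(e)=0$; and the non-locality condition of Definition~\ref{Non-locality} becomes the rigidity that a $\BV$ function whose gradient is carried by a single hyperplane is locally affine in $\scalprod{x}{e}$ --- the exact counterpart of Lemma~\ref{LastTwoSummands}, which makes the ``$j\in H$'' increments dominated by the transversal ones. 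A compactness argument over $e\in S^{d-1}$ combined with the flat/convex dichotomy (transversal cubes via the strict convexity of $z-|x|$, flat cubes via the rigidity just mentioned) should close the main inequality, and the monotonicity principle of Corollary~\ref{SuperSolutionCor}, realized as monotonicity of $\Phi$ along the stopping-time flow, then delivers $\int|K*\nabla f|\,d\nu\lesssim\|\nabla f\|_{L_1}\,\sup_{r,x}r^{1-d}\nu(B_r(x))$. This is precisely the programme executed for simpler functionals in~\cite{Stolyarov2021bis} and~\cite{Stolyarov2021bibis}, now carrying the sharper two-term Bellman function $|x|t+|x|\sqrt{st}$.

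\paragraph{The main obstacle.} The genuine difficulty --- and the reason the statement is phrased as a conjecture --- is replacing the clean $m$-fold split of an atom, and the attendant discrete flat/convex argument, by a quantitative, self-improving continuous statement, \emph{while simultaneously} controlling the genuinely nonlocal tails of $T$. In the tree model every increment $dF_{n+1}|_\omega$ lives on a single atom and the induction is a finite inequality on $\WW$; in $\R^d$ one must run a stopping-time argument in which the excess produced by transversal cubes and by the cancellation-induced decay of the far field is quantitatively large enough to absorb the nonlocal error accumulated over all coarser scales --- in effect a continuous, John--Nirenberg-type substitute for the discrete dichotomy, sharp enough to register the logarithmic gain encoded in $c_K(e)=0$. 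Making this absorption mechanism work is, I expect, the crux; the remaining steps are a transcription of Sections~\ref{S3} and~\ref{S5}.
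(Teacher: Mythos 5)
This statement is left as an open conjecture in the paper: no proof of it is given there, and the whole point of the final section is that only the martingale analogues (Theorems~\ref{GraadTh} and~\ref{DivTh}) are actually established. Your text is therefore not something that can be checked against a proof in the paper, and, more importantly, it is not itself a proof: it is a transference programme in which every genuinely hard step is described but not carried out, as you yourself concede. Concretely, on the sufficiency side you never produce a Euclidean substitute for Proposition~\ref{SuperSolution} and Corollary~\ref{SuperSolutionCor}: the discrete argument works because $(\TT_\alpha[F])_n$ is an exactly adapted martingale and the main inequality is a finite-dimensional inequality on the configuration space~$\WW$, verified pointwise on each atom. For $K*\nabla f$ there is no such exact local structure; the ``monotonicity of $\Phi$ along the stopping-time flow'' you invoke is not defined, and the absorption of the nonlocal tails of the convolution by the excess of the flat/convex dichotomy --- which you correctly identify as the crux --- is precisely the missing mathematics, not a step that can be declared ``a transcription of Sections~\ref{S3} and~\ref{S5}.'' The paper's own history shows this is not cosmetic: even for the simpler $\Phi$-inequalities the passage from the martingale model (\cite{Stolyarov2021bis}) to the Euclidean setting required a separate paper (\cite{Stolyarov2021bibis}), and for the present trace inequality no such passage is known, which is why the statement is a conjecture.

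The necessity half is also incomplete as written. The half-space computation with the mollified indicator $f_R$ is a plausible heuristic, but the expansion $K*\nabla f_R(x)=-\int_{\{\scalprod{y}{e}=0,\,|y|\le R\}}\scalprod{K(x-y)}{e}\,d\mathcal{H}^{d-1}(y)+O(1)$ needs uniform control of the error terms (mollification scale, boundary of the truncation, the near-field $|y|\lesssim t$ piece), and a single function with a logarithmically large potential on a cone does not by itself contradict the inequality: you must exhibit a measure $\nu$ with $\nu(B_r(x))\lesssim r^{d-1}$, uniformly in the scale parameters, on which $\|K*\nabla f_R\|_{L_1(\nu)}/\|\nabla f_R\|_{L_1}\to\infty$, i.e.\ carry out the lacunary iteration you only allude to (this is the Euclidean counterpart of the Riesz-product construction and the Central Limit Theorem step in the martingale necessity proof). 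Until both of these gaps are filled, what you have is a well-aligned reading of the paper's analogy --- the identification of $c_K(e)$ with the canceling condition and of the non-locality condition with one-directional rigidity of $\BV$ gradients is sensible --- but not a proof of either implication.
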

\bibliography{mybib}{}

\providecommand{\bysame}{\leavevmode\hbox to3em{\hrulefill}\thinspace}
\providecommand{\MR}{\relax\ifhmode\unskip\space\fi MR }
% \MRhref is called by the amsart/book/proc definition of \MR.
\providecommand{\MRhref}[2]{%
  \href{http://www.ams.org/mathscinet-getitem?mr=#1}{#2}
}
\providecommand{\href}[2]{#2}
\begin{thebibliography}{10}

\bibitem{APHF2019}
A.~Arroyo-Rabasa, G.~De Philippis, J.~Hirsch, and F.~Rindler, \emph{Dimensional
  estimates and rectifiability for measures satisfying linear {P}{D}{E}
  constraints}, Geom. Funct. Anal. \textbf{29} (2019), no.~3, 639--658.

\bibitem{Ayoush2021}
R.~Ayoush, \emph{On finite configurations in the spectra of singular measures},
  https://arxiv.org/2108.12036.

\bibitem{ASW2018}
R.~Ayoush, D.~Stolyarov, and M.~Wojciechowski, \emph{Sobolev martingales}, Rev.
  Mat. Iberoam. \textbf{37} (2021), 1225--1246.

\bibitem{BourgainBrezis2002}
J.~Bourgain and H.~Brezis, \emph{On the equation $\mathrm{div} {Y} = f$ and
  application to control of phases}, J. Amer. Math. Soc. \textbf{16} (2003),
  no.~2, 393--426.

\bibitem{BourgainBrezis2004}
\bysame, \emph{New estimates for the {L}aplacian, the div--curl, and related
  {H}odge systems}, C. R. Math. Acad. Sci. Paris \textbf{338} (2004), 539--543.

\bibitem{BourgainBrezis2007}
\bysame, \emph{New estimates for elliptic equations and {H}odge type systems},
  J. Eur. Math. Soc. \textbf{9} (2007), 277--315.

\bibitem{BousquetVanSchaftingen2014}
P.~Bousquet and J.~Van Schaftingen, \emph{Hardy--{S}obolev inequalities for
  vector fields and canceling linear differential operators}, Indiana Univ.
  Math. J. \textbf{63} (2014), no.~5, 1419--1445.

\bibitem{BDG2020}
D.~Breit, L.~Diening, and F.~Gmeineder, \emph{On the trace operator for
  functions of bounded~$\mathbb{{A}}$-variation}, Anal. \& {P}{D}{E}
  \textbf{13} (2020), no.~2, 559--594.

\bibitem{Burkholder1984}
D.~L. Burkholder, \emph{Boundary value problems and sharp inequalities for
  martingale transforms}, Ann. Prob. \textbf{12} (1984), no.~3, 647--702.

\bibitem{CVSYu2017}
S.~Chanillo, J.~Van Schaftingen, and P.-L. Yung, \emph{Bourgain--{B}rezis
  inequalities on symmetric spaces of non-compact type}, J. Funct. Anal.
  \textbf{273} (2017), no.~4, 1504--1547.

\bibitem{DieningGmeineder2021}
L.~Diening and F.~Gmeineder, \emph{Sharp trace and {K}orn inequalities for
  differential operators}, https://arxiv.org/abs/2105.09570.

\bibitem{Gagliardo1959}
E.~Gagliardo, \emph{Ulteriori proprieta di alcune classi di funzioni in piu
  variabili}, Ric. Mat. \textbf{8} (1959), no.~1, 24--51, (in Italian).

\bibitem{GRV2022}
F.~Gmeineder, B.~Raita, and J.~Van Schaftingen, \emph{Boundary ellipticity and
  limiting ${L}^1$-estimates on halfspaces}, https://arxiv.org/2211.08167.

\bibitem{GRvS2019}
\bysame, \emph{On limiting trace inequalities for vectorial differential
  operators}, Indiana {U}niv. {M}ath. {J}. \textbf{70} (2021), no.~5,
  2133--2176.

\bibitem{HernandezSpector2020}
F.~Hernandez and D.~Spector, \emph{Fractional integration and optimal estimates
  for elliptic systems}, https://arxiv.org/abs/2008.05639.

\bibitem{Janson1977}
S.~Janson, \emph{Characterizations of~${H}^1$ by singular integral transforms
  on martingales and ${R}^n$}, Math. Scand. \textbf{41} (1977), 140--152.

\bibitem{KMS2015}
S.~V. Kislyakov, D.~V. Maximov, and D.~M. Stolyarov, \emph{Differential
  expression with mixed homogeneity and spaces of smooth functions they
  generate in arbitrary dimension}, J. Funct. Anal. \textbf{269} (2015),
  no.~10, 3220--3263.

\bibitem{LanzaniStein2005}
L.~Lanzani and E.~M. Stein, \emph{A note on div curl inequalities}, Math. Res.
  Let. \textbf{12} (2005), no.~1, 57--61.

\bibitem{Mazya2007}
V.~Maz'ya, \emph{{B}ourgain--{B}rezis type inequality with explicit constants},
  Contemp. Math. \textbf{445} (2007), 247--252.

\bibitem{Mazya2010}
\bysame, \emph{Estimates for differential operators of vector analysis
  involving~${L}_1$-norm}, J. Eur. Math. Soc. \textbf{12} (2010), no.~1,
  221--240.

\bibitem{Mazya2011}
\bysame, \emph{Sobolev spaces}, Springer, 2011.

\bibitem{Mazya1975}
V.~G. Maz'ya, \emph{The summability of functions belonging to {S}obolev
  spaces}, Problems of mathematical analysis, {N}o. 5: {L}inear and nonlinear
  differential equations, {D}ifferential operators ({R}ussian), 1975,
  pp.~66--98. \MR{0511931}

\bibitem{MeyersZiemer1977}
N.~Meyers and W.~Ziemer, \emph{Integral inequalities of {P}oincar\'{e} and
  {W}irtinger type for {BV} functions}, Amer. J. Math. \textbf{99} (1977),
  no.~6, 1345--1360. \MR{507433}

\bibitem{NakaiSadasue2012}
E.~Nakai and G.~Sadasue, \emph{Martingale {M}orrey--{C}ampanato spaces and
  fractional integrals}, J. Funct. Spaces Appl. (2012), Article ID 673929.

\bibitem{NazarovTreil1996}
F.~L. Nazarov and S.~R. Treil, \emph{The hunt for a {B}ellman function:
  applications to estimates for singular integral operators and to other
  classical problems of harmonic analysis}, Alg. i Anal. \textbf{8} (1996),
  no.~5, 32--162, translation in St. Petersburg Math. J. 8 (1997), no. 5,
  721--824.

\bibitem{Nirenberg1959}
L.~Nirenberg, \emph{On ellipltic partial differential equations}, Ann. Scuola
  Norm. Sup Pisa \textbf{13} (1959), no.~3, 115--162.

\bibitem{Osekowski2012}
A.~Os\c{e}kowski, \emph{Sharp martingale and semimartingale inequalities},
  Monografie Matematyczne IMPAN~{\bf 72}, Springer Basel, 2012.

\bibitem{Raita2018}
B.~Raita, \emph{{$L_1$}-estimates for constant rank operators},
  arXiv:1811.10057.

\bibitem{Raita2019}
\bysame, \emph{Critical {$L_p$}-differentiability of $\mathrm{BV}^{A}$-maps and
  canceling operators}, Trans. Amer. Math. Soc. \textbf{372} (2019), no.~10,
  7297--7326.

\bibitem{RSS2021}
B.~Raita, D.~Spector, and D.~Stolyarov, \emph{A trace inequality for solenoidal
  charges}, Potential Anal. (2022).

\bibitem{Rudin1962}
W.~Rudin, \emph{Fourier analysis on groups}, Interscience Tracts in Pure and
  Applied Mathematics, No. 12, Interscience Publishers (a division of John
  Wiley \& Sons, Inc.), New York-London, 1962. \MR{0152834}

\bibitem{VanSchaftingen2004}
J.~Van Schaftingen, \emph{Estimates for {$L^1$}-vector fields}, C. R. Math.
  Acad. Sci. Paris \textbf{339} (2004), no.~3, 181--186.

\bibitem{VanSchaftingen2004one}
\bysame, \emph{A simple proof of an inequality of {B}ourgain, {B}rezis and
  {M}ironescu}, C. R. Math. Acad. Sci. Paris \textbf{338} (2004), no.~1,
  23--26.

\bibitem{VanSchaftingen2008}
\bysame, \emph{Estimates for {$L_1$}-vector fields under higher-order
  differential conditions}, J. Eur. Math. Soc. \textbf{10} (2008), no.~4,
  867--882.

\bibitem{VanSchaftingen2010}
\bysame, \emph{Limiting fractional and {L}orentz space estimates of
  differential forms}, Proc. Amer. Math. Soc. \textbf{138} (2010), 235--240.

\bibitem{VanSchaftingen2013}
\bysame, \emph{Limiting {S}obolev inequalities for vector fields and canceling
  linear differential operators}, J. Eur. Math. Soc. \textbf{15} (2013), no.~3,
  877--921.

\bibitem{VanSchaftingen2014}
\bysame, \emph{Limiting {B}ourgain--{B}rezis estimates for systems of linear
  differential equations: theme and variations}, J. Fixed Point Theory Appl.
  \textbf{15} (2014), no.~2, 273--297.

\bibitem{Shiryaev2019}
A.~N. Shiryaev, \emph{Probability. 2}, Graduate Texts in Mathematics, vol.~95,
  Springer, New York, 2019, Third edition of [ MR0737192], Translated from the
  2007 fourth Russian edition by R. P. Boas and D. M. Chibisov.

\bibitem{Sobolev1938}
S.~Soboleff, \emph{Sur un th\'eor\`eme d'analyse fonctionnelle}, Mat. Sbornik
  \textbf{4(46)} (1938), no.~3, 471--497, (in Russian); translated in Amer.
  Math. Soc. Transl., 1963, {\bf 2}(34), 39--68.

\bibitem{Spector2020}
D.~Spector, \emph{An optimal {S}obolev embedding for ${L}^1$}, J. Funct. Anal.
  \textbf{279} (2020), no.~3, 108559.

\bibitem{SpectorVanSchaftingen2019}
D.~Spector and J.~Van Schaftingen, \emph{Optimal embeddings into {L}orentz
  spaces for some vector differential operators via {G}agliardo's lemma}, Atti
  Accad. Naz. Lincei Rend. Lincei Mat. Appl. \textbf{30} (2019), no.~3,
  413--436.

\bibitem{Stolyarov2021bibis}
D.~Stolyarov, \emph{Fractional integration of summable functions:
  Maz'ya's~$\phi$-inequalities}, https://arxiv.org/abs/2109.08014.

\bibitem{Stolyarov2021bis}
\bysame, \emph{On {$\Phi$}-inequalities for martingale fractional integration
  and their {B}ellman functions}, to appear in Michigan Math. J.,
  https://arxiv.org/abs/2107.09336.

\bibitem{Stolyarov2019}
\bysame, \emph{Martingale interpretation of weakly cancelling differential
  operators}, Zapiski {N}auchn. {S}em. {P}{O}{M}{I} \textbf{480} (2019),
  191--198, (in Russian); to be translated in J. Math. Sci. (N.Y.).

\bibitem{Stolyarov2021}
\bysame, \emph{Weakly canceling operators and singular integrals}, Proc.
  Steklov Inst. Math. \textbf{312} (2021), 249--260.

\bibitem{Stolyarov2020}
\bysame, \emph{Hardy-{L}ittlewood-{S}obolev inequality for {$p=1$}}, Mat. Sb.
  \textbf{213} (2022), no.~6, 125--174.

\bibitem{Stolyarov2020bis}
\bysame, \emph{Dimension estimates for vectorial measures with restricted
  spectrum}, J. Funct. Anal. \textbf{284} (2023), no.~1, 109735.

\bibitem{StolyarovYarcev2020}
D.~Stolyarov and D.~Yarcev, \emph{Fractional integration for irregular
  martingales}, Tohoku Math. J. (2) \textbf{74} (2022), no.~2, 253--261.

\bibitem{VasyuninVolberg2020}
V.~Vasyunin and A.~Volberg, \emph{The {B}ellman function technique in
  {H}armonic {A}nalysis}, Cambridge University Press, 2020.

\bibitem{Watari1964}
C.~Watari, \emph{Multipliers for {W}alsh--{F}ourier series}, Tohoku Math. J.
  \textbf{16} (1964), no.~3, 239--251.

\end{thebibliography}
\bibliographystyle{amsplain}

Dmitriy Stolyarov

St. Petersburg State University, 14th line 29B, Vasilyevsky Island, St. Petersburg, Russia.

d.m.stolyarov@spbu.ru.

\end{document}